\DeclareMathAlphabet{\pazocal}{OMS}{zplm}{m}{n}
\numberwithin{equation}{section}
\newcommand{\bsu}{\boldsymbol{\sigma_1}}
\newcommand{\bss}{\boldsymbol{\sigma_\star}}
\newcommand{\bsd}{\boldsymbol{\sigma_2}}
\newcommand{\bst}{\boldsymbol{\sigma_3}}
\newcommand{\bsi}{\boldsymbol{\sigma_i}}
\newcommand{\bsj}{\boldsymbol{\sigma_j}}
\newcommand{\var}{{\vartheta}}
\newcommand{\tvar}{{\var_\sharp}
}
\newcommand{\tf}{{\tilde{f}}}
\newcommand{\dist}{{\operatorname{dist}}}
\newcommand{\Di}{\slashed{D}}
\newcommand{\douts}{{\hspace*{.7mm} . \hspace*{.7mm} . \hspace*{.7mm} . \hspace*{.7mm}}}
\newcommand{\R}{\mathbb{R}}
\newcommand{\N}{\mathbb{N}}
\newcommand{\p}{{\partial}}
\newcommand{\opsi}{{\overline{\psi}}}
\newcommand{\of}{{\overline{f}}}
\newcommand{\w}{\omega}
\newcommand{\dd}[2]{\dfrac{\partial #1}{\partial #2}}
\newcommand{\matrice}[1]{\left[ \begin{matrix}
#1
\end{matrix} \right]}
\newcommand{\Det}{{\operatorname{Det}}}
\newcommand{\ev}{{\operatorname{e}}}
\newcommand{\od}{{\operatorname{o}}}
\newcommand{\pp}{{\operatorname{pp}}}
\newcommand{\PP}{{\mathscr{P}}}
\newcommand{\DD}{{\pazocal{D}}}
\newcommand{\loc}{{\text{loc}}}
\newcommand{\epsi}{\varepsilon}
\newcommand{\te}{\theta}
\newcommand{\blr}[1]{\left\langle #1 \right\rangle}
\newcommand{\GG}{\pazocal{G}}
\newcommand{\DDD}{\mathscr{D}}
\newcommand{\WW}{\pazocal{W}}
\newcommand{\supp}{\mathrm{supp}}
\newcommand{\Z}{\mathbb{Z}}
\newcommand{\Dd}{\mathbb{D}}
\newcommand{\SSS}{\pazocal{S}}
\newcommand{\Ss}{\mathbb{S}}
\newcommand{\Id}{{\operatorname{Id}}}
\newcommand{\BB}{\pazocal{B}}
\newcommand{\VV}{{\pazocal{V}}}
\newcommand{\EE}{\pazocal{E}}
\newcommand{\II}{{\mathscr{I}}}
\newcommand{\EEE}{{\mathscr{E}}}
\newcommand{\MM}{\pazocal{M}}
\newcommand{\UU}{{\pazocal{U}}}
\newcommand{\LL}{{\pazocal{L}}}
\newcommand{\HH}{\pazocal{H}}
\newcommand{\vp}{{\varphi}}
\newcommand{\systeme}[1]{\left\{ \begin{matrix} #1 \end{matrix} \right.}
\newcommand{\ove}[1]{{\overline{#1}}}
\newcommand{\ess}{{\operatorname{ess}}}
\newcommand{\C}{\mathbb{C}}
\newcommand{\oz}{\ove{z}}
\newcommand{\ow}{\ove{\w}}
\newcommand{\oW}{\ove{W}}
\newcommand{\az}{\alpha}
\newcommand{\OO}{{\mathscr{O}}}
\newcommand{\ZZ}{{\pazocal{Z}}}
\newcommand{\KK}{\pazocal K}
\renewcommand{\Re}{\operatorname{Re}}
\renewcommand{\Im}{\operatorname{Im}}
\newcommand{\hf}{{\hat{f}}}
\newcommand{\ii}{\dfrac{1}{2\pi} \int_0^{2\pi}}
\newcommand{\tPi}{\widetilde{\Pi}}
\newcommand{\tD}{\widetilde{D}}
\newcommand{\de}{ \ \mathrel{\stackrel{\makebox[0pt]{\mbox{\normalfont\tiny def}}}{=}} \ }
\title{Defect modes for dislocated periodic media}
\author{A. Drouot, C. F. Fefferman and M. I. Weinstein}
\newtheorem{thm}{Theorem}
\newtheorem{cor}{Corollary}
\newtheorem{lem}{Lemma}[section]
\newtheorem{prop}{Proposition}[section]
\newtheorem{definition}[lem]{Definition}
\newtheorem{theorem}[thm]{Theorem}
\theoremstyle{definition}
\newtheorem{rmk}{Remark}[section]
\begin{document}
\vspace*{-1cm}
\maketitle
\begin{abstract} We study defect modes in a one-dimensional periodic medium with a dislocation. The model is a periodic Schr\"odinger operator on $\R$, perturbed by an adiabatic dislocation of amplitude  $\delta\ll 1$. If the periodic background admits a Dirac point -- a linear crossing of dispersion curves -- then the dislocated operator acquires a gap in its essential spectrum. For this model (and its honeycomb analog) Fefferman, Lee-Thorp and Weinstein \cite{FLTW1,FLTW2,FLTW3,FLTW4}  constructed defect modes with energies within this gap. The bifurcation of defect modes is associated with the discrete eigenmodes of an effective Dirac operator. 

We improve upon  this result: we show that \textit{all} the defect modes of the dislocated operator
arise from the eigenmodes of the  Dirac operator.  As a byproduct, we derive full expansions of the eigenpairs in powers of $\delta$. The self-contained proof relies on (a) resolvent estimates for the bulk operators; (b) scattering theory for highly oscillatory potentials \cite{Dr2,Dr3,Dr4}. This work significantly advances  the understanding of the topological stability of certain defect states, particularly the bulk-edge correspondence for \textit{continuous} dislocated systems. 
\end{abstract}

\section{Introduction}

The study of systems which exhibit energy localization at material interfaces (line defects in 2D, facets in 3D) has a long history with many fundamental advances and applications. Such models admit edge states, time-harmonic waves propagating \textit{along} rather than \textit{across} the interface.
Recent research focuses on topologically robust edge states, i.e. states that are stable against large local perturbations. 
Examples include the edge states of the quantum hall effect, e.g. \cite{AMU,KDP,Tho,Lau} and  topological insulators in condensed matter physics \cite{FK1,KM1}. 
Haldane and Raghu \cite{HR1,RH} proposed 
a photonic analog of such states. 
This inspired investigations in electronic physics, photonics, acoustics and mechanics; see e.g. \cite{KMY,YVW,WCJ,RZP,SG11,I15,M17,O18}.
 In many examples, topological robustness relates to bifurcations 
 from a {\it Dirac point}, a conical degeneracy in the band spectrum of the  periodic background.
 It appears that Shockley \cite{Sh} first studied this phenomena
 in a one-dimensional periodic setting.
  
The theoretical analysis of such edge states has been mainly carried out in  discrete systems, e.g. tight-binding models and their low energy approximations. In this article we advance the mathematical theory for a continuum model introduced and analyzed in work of Fefferman, Lee-Thorp and Weinstein  \cite{FLTW1,FLTW2}.
The unperturbed model is a continuous periodic Schr\"odinger operator $P_0$, with two dispersion curves  intersecting transversely at a \textit{one-dimensional Dirac point.} We consider an asymptotic dislocation $\PP_\delta$ of $P_0$, obtained by adding an adiabatic phase-shift at $-\infty$ relative to $+\infty$. The dislocation term is of order $\delta$ but its  support is the whole real line. This perturbation induces a  gap in the essential spectrum of $\PP_\delta$, with width $O(\delta)$ about the Dirac point energy. This model shares many features of defects induced by domain walls in higher dimensions. Fefferman, Lee-Thorp, Weinstein and Zhu  exploited this relation with success in the honeycomb lattice \cite{FLTW3,FLTW4,LWZ}.

In \cite{FLTW2}, the authors derive formal full expansions for defect states of $\PP_\delta$ via a multiscale expansion. The leading order term has a two-scale structure. It is a slowly varying combination of the Dirac point Bloch modes, with coefficients solving the eigenvalue problem for an emerging Dirac operator $\Di$. It is next \textit{rigorously} shown that $\PP_\delta$ admits a \textit{genuine} defect state equal to this leading term, modulo $O(\delta^2)$. 

The operator $\Di$ plays a similar role in the formal construction of photonic edge states in Raghu--Haldane \cite{RH}  and relates to the discrete model of a molecular chain introduced by
Su--Schrieffer--Heeger \cite{SSH}. It had appeared earlier in quantum field theory -- see Jackiw--Rebbi \cite{JR:76}. The operator $\Di$ has an odd number of eigenvalues. 
Theorem \ref{thm:2} implies that \textit{all} defect modes of $\PP_\delta$ are  seeded by the eigenpairs of $\Di$. 
Corollary \ref{cor:2} gives \textit{full} expansions of the defect states in powers of $\delta$ -- dramatically improving the error terms in \cite{FLTW2}.

The operator $\Di$ admits a topologically protected zero mode.
The results of this work are used in a crucial way to prove that the {\it topologically protected} character of this mode transfers to the defect mode of a {\it relaxed} $\PP_\delta$-spectral problem. This mode persists even in the non-asymptotic regime $\delta \sim 1$. For further discussion see \S\ref{perspec} and the forthcoming work \cite{Dr}.
 The present analysis as well as that in \cite{FLTW1,FLTW2} studies the $\PP_\delta$-spectral problem for small $\delta$ \textit{only}. Working in this asymptotic regime allows to describe the defect states in a much finer way than just an existence result when $\delta \sim 1$.

\subsection{Description of the model} The model of \cite{FLTW1,FLTW2} consists of a non-local perturbation of the \textit{unperturbed} periodic Schr\"odinger~operator
\begin{equations}
P_0 \de -\dd{^2}{x^2}+V(x) \ = \ D_x^2 +V, \ \ \ \ D_x \de \dfrac{1}{i} \dd{}{x}, 
\\
V \in C^\infty(\R,\R), \ \ V(x+1) = V(x).
\end{equations}
We assume moreover that $\SSS V =V$, where
\begin{equation}
\SSS u(x) \de u(x+1/2).
\label{Shalf}
\end{equation}
Hence, $V$ has period $1/2$ but is regarded as a $1$-periodic function with an additional symmetry. Since $V$ is $1$-periodic, the $L^2$-spectrum of $P_0$ is obtained from the family of operators $P_0(\xi)$ which are formally equal to $D_x^2+V$ but act on the space
\begin{equation}
L^2_\xi \de \left\{f\in L^2_{\rm loc}: f(x+1)=e^{i\xi}f(x)\right\}, \ \ \ \ \xi \in [0,2\pi].
\end{equation}
The parameter $\xi$ is the quasi-momentum. 
For each $\xi\in [0,2\pi]$, the  operator $P_0(\xi)$ has compact resolvent; its spectrum consists of discrete real eigenvalues, listed below with multiplicity:
\begin{equation}
\lambda_{0,1}(\xi) \leq \lambda_{0,2}(\xi) \leq \dots \leq \lambda_{0,j}(\xi) \leq \dots.
\end{equation}
The eigenvalue curves $\xi\mapsto\lambda_{0,j}(\xi)$, $j=1,2,\dots$ are the dispersion curves of $P_0$.

A {\it Dirac point} $(\xi_\star,E_\star)$ is a quasi-momentum / energy pair corresponding to a transverse intersection of dispersion curves of $P_0$; see \S\ref{sec:4.2} for a detailed description.  Due to $\SSS$-invariance, the Dirac points of $P_0$ with $\xi_\star = \pi$ are precisely given by
\begin{equation}
(\pi,E_\star) \ \ \  \text{ where } E_\star \text{ is a } L^2_\pi\text{-eigenvalue of } P_0.
\end{equation}
We can associate to each Dirac point $(\pi,E_\star)$ of $P_0$ a pair $(\phi_+^\star,\phi_-^\star)$ of eigenvectors of $P_0$ for the energy $E_\star$, uniquely characterized modulo a phase factor by
\begin{equation}\label{eq:2h}
(P_0-E_\star) \phi_+^\star = (P_0-E_\star) \phi_-^\star = 0, \ \ 
\SSS \phi_+^\star = i\phi_+^\star, \ \ \SSS \phi^\star_- = -i\phi^\star_- \ \text{ and } \ 
\ove{\phi_+^\star} = \phi_-^\star.
\end{equation}
We refer to the pair $(\phi_+^\star,\phi_-^\star)$ as a ``Dirac eigenbasis".

We now introduce perturbations of $P_0$ that break \eqref{Shalf}. Let $W$ such that
\begin{equation}
W \in C^\infty(\R,\R), \ \ W(x+1) = W(x), \ \ \SSS W = -W.
\end{equation}
Introduce the operators
\begin{equation}
 P_{\pm \delta} \de -\dd{^2}{x^2} + V \pm \delta W  = D_x^2 + V \pm \delta W.
\end{equation}
Observe that $P_{\delta}$ and $P_{-\delta}$ are conjugated:
 $\SSS P_{\delta}= P_{-\delta} \SSS$.
The addition of $\pm\delta W$ to $P_0 = D^2+V$ breaks the $\SSS$-invariance. This perturbation opens a spectral gap $\GG_\delta \subset \R$ about the Dirac energy for $P_{\pm \delta}$ as long as
\begin{equation}\label{eq:2d}
\var_\star \de \blr{\phi_+^\star,W\phi_-^\star} \neq 0.
\end{equation}
The gap $\GG_\delta$ is centered at $E_\star + O(\delta^2)$ and has width $2|\var_\star|\delta + O(\delta^2)$ -- see \cite[Proposition 4.1]{FLTW2} for the simple proof of this fact.

 \begin{center}
\begin{figure}
\caption{Left plot: Schematic of two dispersion curves of $P_0 = D_x^2+V$ which touch at a Dirac point $(\pi,E_\star)$. Right plot: Schematic of two dispersion curves of  $P_\delta = P_0+\delta W $. The perturbation $\delta W $ breaks $\SSS$-symmetry and a spectral gap, $\GG_\delta$, of width $2|\var_\star| \delta + O(\delta^2)$  appears. }\label{fig:1}
\begin{tikzpicture}
      \draw[thick,->] (-.5,-.3) -- (4.5,-.3);
      \draw[thick,->] (0,-.7) -- (0,4.5);
      \node at (4.2,-.6) {$\xi$};
      \node at (-.3,4.2) {$\lambda$};
      \draw[thick] (4,-.7) -- (4,4.5);
      \draw[thick,domain=0:4,smooth,variable=\x,blue] plot ({\x},{\x*\x/4});
      \draw[thick,domain=0:4,smooth,variable=\x,blue] plot ({\x},{(\x-4)*(\x-4)/4});
      \node at (2.95,1) {$(\pi,E_\star)$}; 
      \node[blue] at (2.7,3.3) {$\lambda_{0,j_\star+1}(\xi)$}; 
      \node[blue] at (.7,.7) {$\lambda_{0,j_\star}(\xi)$}; 
      \draw[thick,->] (-.5+6,-.3) -- (4.5+6,-.3);
      \draw[thick,->] (0+6,-.7) -- (0+6,4.5);
      \node at (4.2+6,-.6) {$\xi$};
      \node at (-.3+6,4.2) {$\lambda$};
      \draw[thick] (4+6,-.7) -- (4+6,4.5);
      \draw[thick,domain=0:4,smooth,variable=\x,blue,dashed] plot ({\x+6},{\x*\x/4});
      \draw[thick,domain=0:4,smooth,variable=\x,blue,dashed] plot ({\x+6},{(\x-4)*(\x-4)/4});
      \draw[thick,domain=0:4,smooth,variable=\x,red] plot ({\x+6},{1+((max((\x-4)*(\x-4)/4,\x*\x/4)-1)^2+1/2)^(1/2)});
      \draw[thick,domain=0:4,smooth,variable=\x,red] plot ({\x+6},{1.5-1.3*((min((\x-4)*(\x-4)/4,\x*\x/4)-1)^2+1/2)^(1/2)});

\draw[red,thick,->] (2+6,1) -- (2+6,1.7);
\draw[red,thick,->] (2+6,1) -- (2+6,.58);
      \node[red] at (2.9+6,1.1) {$\sim 2|\vartheta_\star|\delta$};
    \end{tikzpicture}\end{figure}
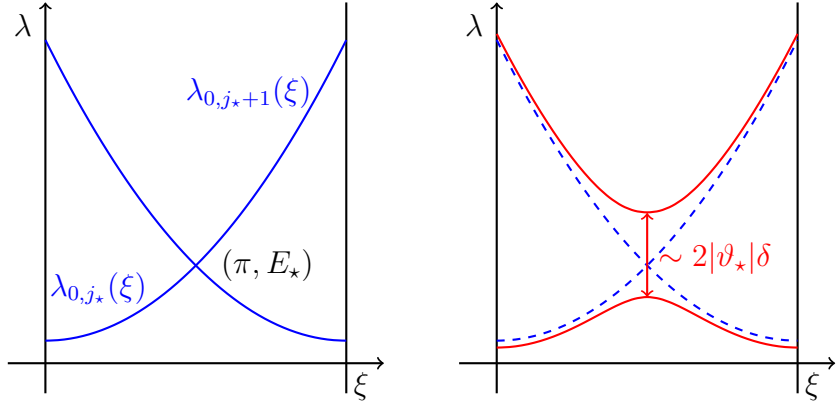
\end{center}

\vspace*{-9mm}

The operator $\PP_\delta$ studied in this article was introduced in  \cite{FLTW1,FLTW2}. It interpolates between $P_{-\delta}$ at $x=-\infty$ and $P_\delta$ at $x=+\infty$. Specifically, we define
\begin{equation}
\PP_\delta \de -\dd{^2}{x^2} + V + \delta \cdot \kappa(\delta \ \cdot) \cdot W  \ = \ D_x^2 + V +\delta \cdot \kappa(\delta \ \cdot) \cdot W.
\end{equation}
Above $\kappa(\delta \ \cdot)(x) \de \kappa(\delta x)$ and $\delta$ is a small parameter. The function $\kappa$ is a smooth  \textit{domain wall}: 
\begin{equation}\label{eq:8g}
 \text{for some } L>0, \  \  \kappa(y) = \systeme{ 1 \text{ for } y \geq L, \\ -1 \text{ for } y \leq -L }; \ \ \ \ \kappa \in C^\infty(\R,\R).
\end{equation}
The essential spectrum of $\PP_\delta$ is determined by the bulk operators $P_{\pm\delta}$: $\Sigma_\ess(\PP_\delta)=\Sigma_\ess(P_{\pm\delta})$. Hence $\PP_\delta$ inherits the same essential spectral gap $\GG_\delta$ as $P_{\pm \delta}$.

We focus on the eigenvalues of $\PP_\delta$ lying in $\GG_\delta$. This problem was recently studied by Fefferman, Lee-Thorp and Weinstein  \cite{FLTW1,FLTW2}. Their multiscale approach produces an effective Dirac operator which involves the parameter $\var_\star$ defined in \eqref{eq:2d}:
\begin{equations}
\Di \de \nu_\star \bst D_y + \bss \kappa,\ \ \ \  {\rm where} \\
\nu_\star \de 2\blr{\phi_+^\star,D_x\phi_+^\star} \neq 0, \ \ \ \ \bss \de \matrice{0 & \ove{\var_\star} \ \\ \var_\star & 0}  =  \Re(\var_\star)\bsu+\Im(\var_\star)\bsd.
\end{equations}

Some spectral properties of $\Di$ are particularly relevant: 
\begin{itemize}
\item the essential spectrum of $\Di$ equals $\R \setminus (-|\var_\star|,|\var_\star|)$;
\item the point spectrum $\{\var_j\}_{j=-N}^N$ is simple and symmetric about the origin;
\begin{equation}
-|\var_\star| < \var_{-N} < \dots < \var_{-1} < \var_0=0 < \var_1 < \dots < \var_N < |\var_\star|,\ \ \ \ \var_{-j} = -\var_j. 
\end{equation}
\end{itemize}
See \S\ref{sec:7.3} for proofs.
In \cite{FLTW2} it is shown that each eigenvalue $\var_j$ of $\Di$ seeds an eigenvalue of $\PP_\delta$, with energy $E_j(\delta)=E_\star + \var_j \delta + O(\delta^2)$. The corresponding eigenvector takes the form
 \begin{equation}
u_{\delta,j}(x) =  \az_+(\delta x) \phi_+^\star(x) + \az_-(\delta x) \phi_-^\star(x), \ \ (\Di-\var_j) \matrice{\az_+ \\ \az_-} = \matrice{0 \\ 0}.
\label{eq:1}
\end{equation}
Therefore  $\PP_\delta$ has at least one eigenvalue in $\GG_\delta$  for $\delta$ sufficiently small.
%

The present paper addresses the  questions:
\begin{itemize}
\item Are all eigenvalues of $\PP_\delta$ in $\GG_\delta$ of the form \eqref{eq:1}?
\item Do eigenpairs admit full expansions in powers of $\delta$? 
\end{itemize}

\subsection{Main results}
We make the following hypotheses: 
\begin{enumerate}
\item[(H1)]  $V$, $W \in C^\infty(\R,\R)$ are $1$-periodic and $\SSS V =V$, $\SSS W = -W$.
\item[(H2)] $(\pi,E_\star)$ is a Dirac point of $P_0$ with Dirac eigenbasis $(\phi_+^\star,\phi_-^\star)$; see \eqref{eq:2h}. 
\item[(H3)]  $\var_\star = \blr{\phi_+^\star,W \phi_-^\star}\neq 0$. 
\end{enumerate}

As mentioned above, when (H1)--(H3) are satisfied, the operator $\PP_\delta$ has no essential spectrum in an interval centered at $E_\star$, of width $2|\var_\star|\delta+O(\delta^2)$. 

\begin{theorem}\label{thm:2}  
Assume that $\operatorname{(H1)}$--$\operatorname{(H3)}$ hold. For any $\tvar \in (\var_N,|\var_\star|)$, there exist $\epsi > 0$ and $\delta_0 > 0$ such that
\begin{itemize}
\item[(A)]  The spectrum of $\PP_\delta$ in $[E_\star-\tvar\delta, E_\star+\delta \tvar]$ is contained in
\begin{equation}\label{eq:2n}
\bigcup_{j=-N}^N\ \II_{\delta,j}, \ \ \ \  \textrm{where} \ \ \II_{\delta,j} \de \big[ E_\star + (\var_j-\epsi) \delta, E_\star+ (\var_j+\epsi) \delta \big].
\end{equation}
\item[(B)]  The spectrum of $\PP_\delta$ in each $\II_{\delta,j}$ consists of at most one eigenvalue. 
\end{itemize}
\end{theorem}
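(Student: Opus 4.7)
\medskip

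\noindent\textbf{Proof proposal.} The plan is to build an approximate inverse (a parametrix / Grushin problem) for $\PP_\delta - E$ with $E=E_\star+\lambda\delta$, $\lambda\in[-\tvar,\tvar]$, in which the role of the singular part is played by the resolvent of the Dirac operator $\Di$ acting on the slow variable $y=\delta x$, and the role of the regular part is played by the bulk resolvents $(P_{\pm\delta}-E)^{-1}$, localized by the domain wall $\kappa$. The argument is then purely analytic: statement (A) follows from invertibility of the parametrix away from the eigenvalues $\var_j$, and statement (B) follows from a Lyapunov--Schmidt reduction to a $1\times 1$ block near each $\var_j$, exploiting simplicity of the eigenvalues of $\Di$.

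First I would rescale and split. Write $\PP_\delta-E=\PP_\delta - E_\star-\lambda\delta$ and, for $u\in L^2(\R)$, decompose
\[
u = \az_+(\delta x)\phi_+^\star(x)+\az_-(\delta x)\phi_-^\star(x) + u^\perp,
\]
where $u^\perp$ is the orthogonal complement of the ``slowly modulated Dirac eigenbasis'' subspace at energy $E_\star$. Applying $\PP_\delta-E$ and using $P_0\phi_\pm^\star=E_\star\phi_\pm^\star$, a direct computation gives, at leading order in $\delta$, that the action on $(\az_+,\az_-)^{\top}$ is precisely $\delta(\Di-\lambda)$, with the cross terms $\lr{\phi_+^\star,W\phi_-^\star}=\var_\star$ and $\lr{\phi_+^\star,D_x\phi_+^\star}=\nu_\star/2$ producing the Dirac coefficients; the error couples to $u^\perp$ through $[D_x^2,\az_\pm(\delta\cdot)]$, which costs a factor $\delta$. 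On the complementary subspace the operator $\PP_\delta-E$ is, to leading order, $P_0-E_\star$, which is invertible on the range of the spectral projector away from the Dirac eigenvalue.

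This suggests a Grushin problem of the form
\[
\pazocal{P}(\lambda,\delta) \de \matrice{\PP_\delta-E_\star-\lambda\delta & R_-(\delta) \\ R_+(\delta) & 0}:\ L^2(\R)\oplus L^2(\R,\C^2)\longrightarrow L^2(\R)\oplus L^2(\R,\C^2),
\]
where $R_+$ extracts the slow envelope of the projection on $\spane(\phi_+^\star,\phi_-^\star)$ and $R_-$ is the corresponding injection. Following the scattering-theoretic framework of \cite{Dr2,Dr3,Dr4}, which handles exactly the highly oscillatory potentials of the form $\delta\cdot(\text{slow})\cdot(\text{fast periodic})$ present here, I would establish that for $\lambda$ at positive distance from $\sigma(\Di)$ and from $\pm|\var_\star|$, $\pazocal{P}(\lambda,\delta)$ is invertible with inverse whose effective Schur complement $E_{-+}(\lambda,\delta):L^2(\R,\C^2)\to L^2(\R,\C^2)$ satisfies
\[
E_{-+}(\lambda,\delta)=-\delta(\Di-\lambda)+\delta\, \pazocal{R}(\lambda,\delta),
\]
with $\|\pazocal{R}(\lambda,\delta)\|=O(\delta)$ uniformly in $\lambda\in[-\tvar,\tvar]$. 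The equivalence of invertibility (Schur complement lemma) then gives (A): $E\in\sigma(\PP_\delta)\cap[E_\star-\tvar\delta,E_\star+\tvar\delta]$ forces $E_{-+}(\lambda,\delta)$ not invertible, and by a standard Rouch\'e / resolvent estimate for the unperturbed $\Di-\lambda$ this can only happen within $\epsi$-neighborhoods of the $\var_j$, for any $\epsi>0$ provided $\delta\leq\delta_0(\epsi)$.

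For (B), I would zoom in near each simple eigenvalue $\var_j$ of $\Di$. Let $\Pi_j$ denote the rank-one $\Di$-spectral projector on $\var_j$. Enlarge the Grushin problem by appending $\Pi_j$ into $R_\pm$; the enlarged Schur complement is then a scalar-valued holomorphic function $g_j(\lambda,\delta)$ of $\lambda$ (Grushin reduction to a finite-dimensional problem), and invertibility of $\PP_\delta-E$ is equivalent to $g_j(\lambda,\delta)\neq 0$. From the leading Dirac calculation, $g_j(\lambda,\delta)=\delta(\lambda-\var_j)+O(\delta^2)$ uniformly in $\lambda\in\II_{\delta,j}/\delta-\var_j$, so by Rouch\'e $g_j(\cdot,\delta)$ has exactly one zero in the $\epsi$-neighborhood of $\var_j$; that zero is the at most one eigenvalue of $\PP_\delta$ in $\II_{\delta,j}$.

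The main obstacle will be the uniform control of the off-diagonal remainder $\pazocal{R}(\lambda,\delta)$: the potential $\delta\kappa(\delta\cdot)W$ couples the fast periodic scale of $\phi_\pm^\star$ to the slow scale of the envelope and of $\kappa$, and naive commutator estimates lose the crucial factor $\delta$ needed to close the Neumann series. This is precisely where the scattering estimates of \cite{Dr2,Dr3,Dr4} for highly oscillatory potentials are needed --- they provide the resolvent bounds on $P_{\pm\delta}-E$ uniform in $\delta$ and in the band-edge limit $|\lambda|\uparrow|\var_\star|$ that make the parametrix construction rigorous. Once those inputs are in place, the rest of the argument is a now-standard Grushin / Schur-complement analysis.
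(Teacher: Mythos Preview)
Your Grushin/Lyapunov--Schmidt route is genuinely different from the paper's and, if it closes, has a structural advantage. The paper builds a parametrix $Q_\delta(\lambda)$ from the bulk resolvents $(P_{\pm\delta}-\lambda)^{-1}$ glued by $\kappa$, proves the key resolvent expansion (Theorem~\ref{thm:1}) for $(P_{\pm\delta}-\lambda)^{-1}$ via a fiberwise Floquet--Bloch analysis, and then studies the Fredholm determinant of $\Id+K_\delta(\lambda)$. The homogenized limit of that determinant detects eigenvalues of $\Di{}^2$ rather than $\Di$, so every nonzero $\var_j$ appears with multiplicity two; removing the spurious copy requires the ``twisted parametrix'' of \S\ref{sec:5.3} (a rank-one perturbation of $Q_\delta$ designed to push one of the two zeros out of the window). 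Your Schur complement $E_{-+}\approx-\delta(\Di-\lambda)$ would produce $\Di$ directly and bypass this detour entirely, which is conceptually cleaner.

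There is, however, a real gap in your setup. The splitting $u=\az_+(\delta x)\phi_+^\star+\az_-(\delta x)\phi_-^\star+u^\perp$ is not a well-defined closed decomposition of $L^2(\R)$: the $\phi_\pm^\star$ are quasiperiodic, not $L^2$, and there is no canonical bounded ``slow-envelope'' projector to serve as your $R_+$. More seriously, the claim that on the complement ``$\PP_\delta-E$ is, to leading order, $P_0-E_\star$, which is invertible'' is false as written: $E_\star\in\Sigma_\ess(P_0)$, so $P_0-E_\star$ is not invertible on any $L^2$-subspace of infinite codimension. What \emph{is} invertible is (i) the fiber operator $P_0(\xi)-E_\star$ restricted to $V(\xi)^\perp$ for $\xi$ near $\pi$, and (ii) $P_\delta(\xi)-\lambda$ for $|\xi-\pi|\gtrsim\delta^{1/3}$; isolating and controlling these pieces is exactly the content of Propositions~\ref{prop:1l}--\ref{prop:1b} and Theorem~\ref{thm:1}. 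In other words, to make $R_\pm$ bounded and $\pazocal R$ small you will need the same Floquet--Bloch machinery the paper develops --- the references \cite{Dr2,Dr3,Dr4} enter only \emph{after} that step, to homogenize a matrix potential $\MM(x/\delta,x)$, not to bound bulk resolvents. Once $R_\pm$ are realized via the Bloch projector onto quasimomenta near $\pi$ and the paper's fiber estimates are imported, your scheme should close, likely with remainder $O(\delta^{1/3})$ rather than the $O(\delta)$ you claim; that weaker bound is still sufficient for both (A) and (B).
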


Thanks to variational principles for selfadjoint operators, Theorem \ref{thm:2} implies much more accurate expressions for the eigenvalues $E_{\delta,j}$ and their eigenvectors,
 with respect to the small parameter, $\delta$. We introduce the function spaces:
\begin{equation}
X \de \left\{ v \in C^\infty(\R^2,\C), \ \ v(x+1,y) = v(x,y), \ \ v(x,y) = O\left(e^{-|y|}\right) \right\}.
\end{equation}

\begin{cor}\label{cor:2} Under the assumptions and notations of Theorem \ref{thm:2}, the operator $\PP_\delta$ has exactly one eigenvalue in each subinterval $\II_{\delta,j}$ of the spectral gap $\GG_\delta$. 

Moreover, for all $M \geq 1$ and $k \geq 0$, the associated eigenpair $(E_{\delta,j},u_{\delta,j})$~expands~as
\begin{equations}\label{eq:3r}
E_{\delta,j} = E_\star + \var_j \delta + a_2 \delta^2 + \douts + a_M \delta^M + O\left(\delta^{M+1}\right), \ \ \ \  a_m \in \R;
\\
u_{\delta,j}(x) = v_0(x,\delta x) + \delta v_1(x,\delta x) + \douts + \delta^M v_M(x,\delta x) + o_{H^k}\left(\delta^{M}\right), \ \ \ \ v_m \in X.
\end{equations}
The implicit constants involved in \eqref{eq:3r} depend on $M$ and $k$ and
\begin{itemize}
 \item $a_m \in \R$ and $v_m \in X$ are constructed following a recursive (multiscale) procedure; $H^k$ denotes the Sobolev space of order $k$. 
\item The function $v_0$ appearing in the leading order term satisfies
\begin{equation}\label{eq:1t}
v_0(x,y) = \az_+(y) \phi_+^\star(x) + \az_-(y) \phi^\star_-(x), \ \  (\Di-\var_j)\matrice{\az_+ \\ \az_-} = \matrice{0 \\ 0}.
\end{equation}
\end{itemize}
\end{cor}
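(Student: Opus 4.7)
The plan is to combine a formal two-scale construction with the localization-and-uniqueness statement of Theorem \ref{thm:2}. For each $j\in\{-N,\dots,N\}$ and each $M\geq 1$ I would build an approximate eigenpair $(E^{(M)}_{\delta,j},u^{(M)}_{\delta,j})$, with $u^{(M)}_{\delta,j}$ of the form prescribed by \eqref{eq:3r} (truncated at order $M$), such that
\[
\bigl(\PP_\delta-E^{(M)}_{\delta,j}\bigr)u^{(M)}_{\delta,j}=O_{H^k}\bigl(\delta^{M+1}\bigr).
\]
Self-adjointness of $\PP_\delta$ then places a true spectral point within $O(\delta^{M+1})$ of $E^{(M)}_{\delta,j}$; for $\delta$ small this point lies in $\II_{\delta,j}$, so Theorem \ref{thm:2}(B) upgrades it to an eigenvalue. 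Since Theorem \ref{thm:2}(B) allows at most one eigenvalue per $\II_{\delta,j}$, and $M$ is arbitrary, both existence of a unique eigenvalue in each subinterval and the asymptotic expansion of $E_{\delta,j}$ in \eqref{eq:3r} follow.

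For the ansatz itself I would set
\[
u(x)=\sum_{m=0}^M\delta^m v_m(x,\delta x),\qquad E=E_\star+\var_j\delta+\sum_{m=2}^M a_m\delta^m,
\]
with unknowns $v_m\in X$ and $a_m\in\R$. Under $\partial_x u=(\partial_x+\delta\partial_y)V\big|_{y=\delta x}$, the equation $(\PP_\delta-E)u=0$ becomes a hierarchy $(P_0-E_\star)v_m=F_m$, with $F_m$ explicit in $v_0,\dots,v_{m-1}$, $a_2,\dots,a_{m-1}$, and $\partial_y$. At order $\delta^0$ one gets $v_0\in\ker\bigl((P_0-E_\star)|_{L^2_\pi}\bigr)=\spane(\phi_+^\star,\phi_-^\star)$, hence $v_0(x,y)=\az_+(y)\phi_+^\star(x)+\az_-(y)\phi_-^\star(x)$ as in \eqref{eq:1t}. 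At order $\delta^1$, pairing $F_1$ against $\phi_\pm^\star$ produces the solvability condition $(\Di-\var_j)(\az_+,\az_-)^\top=0$, fixing $(\az_+,\az_-)$ up to a scalar; one then solves for $v_1$ modulo a kernel contribution absorbed at the next step. At each subsequent order the solvability condition is an inhomogeneous Dirac equation of the form $(\Di-\var_j)(\az_+^{(m)},\az_-^{(m)})^\top=G_m-a_m(\az_+,\az_-)^\top$; simplicity of $\var_j$ and the Fredholm alternative determine $a_m\in\R$ and $(\az_+^{(m)},\az_-^{(m)})$ uniquely. Reality of $a_m$ uses the conjugation symmetry $\ove{\phi_+^\star}=\phi_-^\star$ together with the Hermiticity of $\bss$. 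Exponential decay of $v_m$ in $y$ is inherited at each step from the exponential localization of Dirac eigenmodes at $\var_j$ and from the fact that $\kappa$ is constant outside $[-L,L]$.

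Once the ansatz is in hand, direct substitution, elliptic regularity for $\PP_\delta$, and the smoothness and $y$-decay of each $v_m$ give the quasi-mode estimate displayed above; the spectral theorem combined with Theorem \ref{thm:2} then yields the scalar expansion of $E_{\delta,j}$ in \eqref{eq:3r} with the constructed $a_m$. For the eigenvector, let $\Pi_{\delta,j}$ denote the orthogonal projector onto $\ker(\PP_\delta-E_{\delta,j})$. Theorem \ref{thm:2}(A) places $E^{(M)}_{\delta,j}$ at distance $\gtrsim\delta$ from $\Sigma(\PP_\delta)\setminus\{E_{\delta,j}\}$, so $(\PP_\delta-E^{(M)}_{\delta,j})^{-1}(1-\Pi_{\delta,j})$ is bounded by $O(\delta^{-1})$ on $L^2$; an elliptic bootstrap then gives $\|(1-\Pi_{\delta,j})u^{(M)}_{\delta,j}\|_{H^k}=O(\delta^M)$, which after normalization is the second line of \eqref{eq:3r}.

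The hard part is the inductive solvability of the hierarchy in the exponentially decaying space $X$: at each order one must (i) invert $P_0-E_\star$ on the $L^2_\pi$-orthogonal complement of $\spane(\phi_+^\star,\phi_-^\star)$ uniformly in the parameter $y$, (ii) solve the coupled Dirac solvability equation while preserving exponential decay, and (iii) enforce reality of $a_m$. These three steps rest respectively on the spectral gap of $P_0(\pi)$ at $E_\star$, on the simplicity and exponential localization of the eigenmodes of $\Di$ at $\var_j$, and on the Hermitian and conjugation symmetries already encoded in (H1)--(H3). Everything else is standard self-adjoint perturbation theory organized by the localization information of Theorem \ref{thm:2}.
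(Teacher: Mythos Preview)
Your proposal is correct and follows essentially the same route as the paper: build quasimodes via the multiscale hierarchy (the paper outsources this to \cite{FLTW2}, you sketch it), invoke self-adjointness to locate a true eigenvalue, use Theorem~\ref{thm:2} for uniqueness and for the $\delta$-size spectral isolation, then bootstrap the eigenvector approximation from $L^2$ to $H^k$ via elliptic regularity. The only cosmetic difference is that the paper packages the quasimode-to-eigenpair step in an abstract lemma (Lemma~\ref{lem:1r}) while you argue directly with the projector and $(\PP_\delta-E)^{-1}(1-\Pi_{\delta,j})=\OO_{L^2}(\delta^{-1})$; these are equivalent.
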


Theorem \ref{thm:2} and Corollary \ref{cor:2} characterize the spectrum of $\PP_\delta$ in the gap $\GG_\delta$, arbitrarily close to its edges. These results demonstrate that after suitable rescaling, the spectrum of $\Di$ and $\PP_\delta$ are very similar; see Figure \ref{fig:2}. Figure \ref{fig:3} is a schematic of the ``spray of eigenvalues" of $\PP_\delta$ into the spectral gap $\GG_\delta$. The expression \eqref{eq:1t} relates the eigenfunctions of $\PP_\delta$ to those of $\Di$; to leading order in $\delta$,  the eigenfunction $u_{\delta,j}$ is a  modulation of the Dirac eigenbasis
$(\phi_+^\star,\phi_-^\star)$, with slowly varying amplitudes that form eigenvectors of $\Di$.  The terms $v_m(x,\delta x)$ exhibit a two-scale structure: they oscillate on a unit length scale and decay on the length scale $\delta^{-1}$.

 \begin{center}
\begin{figure}
\begin{tikzpicture}

      \draw[thick] (-5,0) -- (5,0);
      \fill[gray] (-5,-.1) rectangle (-2.5,.1);
    \draw[thick] (-2.5,-.2) --(-2.5,.2);
      \fill[gray] (5,-.1) rectangle (2.5,.1);
    \draw[thick] (2.5,-.2) --(2.5,.2);
        \node at (-2.5,-.5) {$-|\var_\star|$};
        \node at (2.5,-.5) {$|\var_\star|$};

        \node[thick,blue] at (-1.5,0) {$\bullet$};
        \node[blue] at (-1,0) {$\bullet$};
        \node[blue] at (0,0) {$\bullet$};
        \node at (0,-.5) {$0$};
        \node[blue] at (1,0) {$\bullet$};
        \node[blue] at (1.5,0) {$\bullet$};
        \node at (0,.6) {Spectrum of $\Di$};
      \draw[thick] (-5,0-2) -- (5,0-2);
      \fill[lightgray] (-2.5,-.1-2) rectangle (-2.3,.1-2);

      \fill[gray] (-5,-.1-2) rectangle (-2.5,.1-2);
    \draw[thick] (-2.5,-.2-2) --(-2.5,.2-2);
      \fill[gray] (5,-.1-2) rectangle (2.5,.1-2);
      \fill[lightgray] (2.5,-.1-2) rectangle (2.3,.1-2);
    \draw[thick] (2.5,-.2-2) --(2.5,.2-2);
        \node at (-2.5,-.5-2) {$E_\star-|\var_\star|\delta$};
        \node at (2.5,-.5-2) {$E_\star+|\var_\star|\delta$};

        \node[thick,red] at (-1.5,0-2) {$\bullet$};
        \node[red] at (-1,0-2) {$\bullet$};
        \node[red] at (0,0-2) {$\bullet$};
         \node at (0,-.5-2) {$E_\star$};
        \node[red] at (1,0-2) {$\bullet$};
        \node[red] at (1.5,0-2) {$\bullet$};
        \node at (0,.6-2) {Spectrum of $\PP_\delta$};
\end{tikzpicture}
{\caption{Schematic of eigenvalues of $\Di$ in $(-|\var_\star|,|\var_\star|)$ (top, blue dots) and eigenvalues of $\PP_\delta$  in the spectral gap containing $E_\star$ (bottom, red dots). An approximate rescaling equal to $\var \mapsto E_\star+\delta \var + O(\delta^2)$ maps the top to the bottom. Theorem \ref{thm:2} and Corollary \ref{cor:2} do not apply in the lighter gray region near the essential spectrum.}\label{fig:2}}
\end{figure}
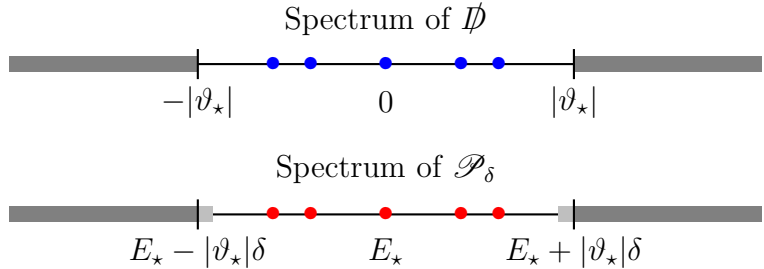
\end{center}

\vspace*{-8mm}

The \textit{formal} multiscale expansion procedure referred to in Corollary \ref{cor:2} is a part of the analysis  in \cite[\S4]{FLTW2}. There, this expansion is rigorously justified at first order via a decomposition of $\PP_\delta$ in terms of dominant quasi-momenta / energy components near $(\pi,E_\star)$; and sub-dominant contributions away from $(\pi,E_\star)$. The  Dirac operator $\Di$ emerges as controlling the point spectrum. This procedure would be formidable to implement at arbitrary order in $\delta$. Although \cite{FLTW2} produces \textit{some} modes of $\PP_\delta$, the method there cannot prove that these are \textit{all} the modes.

Our strategy incorporates the above picture of dominant and subdominant quasi-momenta / energies. It is simpler to implement and yields more information. Estimates on the bulk operators $P_{\pm \delta}$ -- rather than directly on $\PP_\delta$ -- play a key role. The full expansion provided by Corollary \ref{cor:2} follows with little efforts from Theorem \ref{thm:2} and the existence of arbitrarily accurate approximate eigenvectors. To emphasize that Theorem \ref{thm:2} is much deeper than Corollary \ref{cor:2}, we present the proof of Corollary \ref{cor:2} \textit{assuming} Theorem \ref{thm:2}  in \S\ref{sec:4}. The rest of the paper will be devoted to the proof of Theorem \ref{thm:2}.\footnote{The setting  \eqref{eq:8g} is taken for the sake of simplicity. In \cite{FLTW1,FLTW2} it is assumed that $\kappa(x)$ is smooth, approaches $\pm1$ as $x\to\pm\infty$, and that $\kappa'$ decays to zero as $x \rightarrow \pm\infty$ sufficiently rapidly. Many of the results presented here -- without changes in the proofs -- extend to periodic potentials with less regularity; and more general domain wall functions. For instance, assume that
\begin{equation}
\kappa(x) = \pm 1 + O\left( |x|^{-2} \right) \text{ near } \pm \infty; \ \ \ \ \kappa \in H^1_\loc; \ \ \ \ \exists \epsi \in (0,1), \ \ \kappa'(x) \exp\left( - \epsi\dfrac{ |\var_\star x| }{|\nu_\star|} \right) \in L^2.
\end{equation}
If $\Di$ has $M$ eigenvalues in $\big(-\var_\star(1-\epsi),\var_\star(1-\epsi) \big)$ then for $\delta$ sufficiently small, $\PP_\delta$ has at least $M$ eigenvalues in $\big( E_\star-\var_\star(1-\epsi) \delta,E_\star +\var_\star(1-\epsi) \delta \big)$. If in addition $\kappa \in C^2$ and $\kappa', \kappa''$ are $O(|x|^{-2})$ near $\pm \infty$ then $\PP_\delta$ has exactly $M$ eigenvalues in that interval. We leave this verification to the reader.}

\subsection{Overview and strategy} The litterature on edge states generated by small -- possibly non-local -- defects in periodic backgrounds is extensive; see e.g. \cite{DH,FK,Ol,Bor,BG1,PLAJ,Bo7,Bo8,HW,Ze}. Corollary \ref{cor:2} belongs to this class of results. Our approach -- exposed below -- relies on a novel, scattering-oriented framework.

\begin{itemize}

\item[(A)] To prove Theorem \ref{thm:2} and Corollary \ref{cor:2} we show:
\begin{enumerate}
\item[(a)] There is no point spectrum in $\GG_\delta\ \setminus\ \bigcup_{j=N}^{-N}\II_{\delta,j}$ --  the subintervals $\II_{\delta,j}\subset\GG_\delta$ are defined in \eqref{eq:2n}.
\item[(b)] Each interval $\II_{\delta,j}\subset\GG_\delta$ of $\PP_\delta$ contains at most one eigenvalue of $\PP_\delta$.
\item[(c)] Each interval $\II_{\delta,j}\subset\GG_\delta$  contains at least an eigenvalue of $\PP_\delta$; the associated eigenpair admits a full  expansion in powers of $\delta$.
\end{enumerate}

The main part of this paper concerns (a) and (b). Part (c) follows from (b); a variational principle for self-adjoint operators; and the multiscale expansion of \cite{FLTW2}. We show that (b) implies (c) in \S\ref{sec:4}.

\item[(B)] In Section \ref{sec:5.1} we construct a parametrix   $Q_\delta(\lambda)$ for $\PP_\delta-\lambda$ when $\lambda \in \Dd(E_\star,\tvar\delta)$, $\tvar \in (\var_N,|\var_\star|)$;  we recall that $\var_N>0$ denotes the largest eigenvalue of $\Di$. Specifically, there exists $Q_\delta(\lambda) : L^2 \rightarrow H^2$ such that for $\lambda \in \Dd(E_\star,\var_\sharp \delta)$,
\begin{equation}\label{eq:2f}
(\PP_\delta-\lambda)Q_\delta(\lambda) - \Id  \de K_\delta(\lambda) \  \text{ is a compact operator on } L^2.
\end{equation}
The family $\lambda \in \Dd(E_\star,\tvar\delta) \mapsto K_\delta(\lambda)$ depends holomorphically on $\lambda$. Analytic Fredholm theory shows that $(\Id+K_\delta(\lambda))^{-1}$ is meromorphic. We can then write
\begin{equation}\label{eq:4f}
(\PP_\delta-\lambda)^{-1} = Q_\delta(\lambda) \cdot \big(\Id+K_\delta(\lambda)\big)^{-1}.
\end{equation}
Thus each eigenvalue of $\PP_\delta$ in $[E_\star-\tvar \delta, E_\star+\tvar\delta]$ is a pole of of $(\Id+K_\delta(\lambda))^{-1}$.

\item[(C)] We then seek  ``zeros" of $\Id+K_\delta(\lambda)$ using holomorphic tools. This requires estimates of $K_\delta(\lambda)$ for $\delta$ near $ 0$. This happens to be related to an expansion of the resolvent difference 
\begin{equation}
(P_\delta-\lambda)^{-1} - (P_{-\delta}-\lambda)^{-1}.
\end{equation}

\item[(D)] A key ingredient is Theorem \ref{thm:1}. It shows that modulo rescaling and matrix-valued multiplications related to $\phi_+^\star$ and $\phi_-^\star$, the resolvent $(P_\delta-\lambda)^{-1}$ behaves like a $2 \times 2$ Fourier multiplier:
\begin{equations}\label{eq:4g}
\tvar \in (0,|\var_\star|), \ \delta \in (0,\delta_0), \ z \in \Dd(0,\tvar), \
 \lambda = E_\star+z \delta
\\
\Rightarrow  \ 
(P_\delta-\lambda)^{-1} = \dfrac{1}{\delta} \matrice{ \phi_+^\star \\ \phi_-^\star  }^\top     \UU_\delta \cdot \big( \Di_+-z \big)^{-1} \cdot  \UU_\delta^{-1}  \ove{\matrice{ \phi_+^\star \\ \phi_-^\star  }} + \OO_{L^2}(\delta^{1/3}),
\end{equations}
where $\UU_\delta f(x) = \delta^{1/2}f(\delta x)$ and $\Di_+=\nu_\star \bst D_x + \bss$.

\item[(E)] The expansion \eqref{eq:4g} allows us to identify the poles of $(\Id+K_\delta(\lambda))^{-1}$ with the eigenvalues of a matrix Schr\"odinger operator with highly oscillatory $2\times2$ potential $\MM(x/\delta,x)$ where $\MM(x,y)$ is one-periodic in $x$, compactly supported in $y$ and smooth in both variables. This class of problems
 arises in the study of waves in microstructures and has been studied extensively; see e.g. \cite{GW,BorG,DW,DVW,Dr3,Dr2,Dr4,DR}. In particular, the first author's work \cite{Dr2} shows that the eigenvalues of such operators are approached by those of their average. This homogenized operator happens to be $\Di{}^2$. 
This identifies the poles of $\left(\Id + K_\delta(\lambda)\right)^{-1}$ in the spectral gap $\GG_\delta$ with the eigenvalues of $\Di{}^2$ modulo negligible errors. Only a very weak form of the results of \cite{Dr2} is needed here; the presentation is self-contained.

\item[(F)] But there's a glitch. The operator $\Di{}^2$ has twice as many eigenvalues as predicted in Theorem \ref{thm:2} and Corollary \ref{cor:2}. This is due to the factorization \eqref{eq:4f} of $\PP_\delta-\lambda$: the parametrix $Q_\delta(\lambda)$ in \eqref{eq:4f} happens to cancel some of the poles of $(\Id+K_\delta(\lambda))^{-1}$.  This phenomena is however unstable and the parametrix is not unique:  we may for example perturb $Q_\delta(\lambda)$ by a rank-one operator and still respect \eqref{eq:2f}. We use these extra degrees of freedom to produce a {\it twisted parametrix},
 which  {\it deforms away} the spurious poles. With this final piece in place, we obtain Theorem \ref{thm:2}. 
\end{itemize}

Finally, we remark that the Dirac operator $\Di$ arises in other settings \cite{JR:76,SSH,HR1,RH,LWW}. Our detailed results on the spectrum of $\Di$, particularly the simplicity of eigenvalues established in Appendix \ref{app:3}, are of independent interest.

\subsection{Further perspectives}\label{perspec}  From the mathematical physics point of view, the most important advance of this work is the precise counting of eigenvalues of $\PP_\delta$ in the gap $\GG_\delta$. In a forthcoming work of the first author \cite{Dr}, the Hamiltonian $\PP_\delta$ is embedded in a one-parameter family of operators $t \in \R/(2\pi\Z) \mapsto \PP_\delta(t)$. Such systems were previously studied in \cite{Ko,Ko2,HK,DPR} -- see \cite{HK2,HK3} for two-dimensional extensions. Following the mathematical physics literature, we define:
\begin{itemize}
\item An edge index: the spectral flow of $t \in \R/(2\pi\Z) \mapsto \PP_\delta(t)$ at $E_\star$, i.e. the signed number of eigenvalues of $\PP_\delta(t)$ crossing $E_\star$ as $t$ runs through $\R/(2\pi\Z)$.
\item A bulk index: the Chern number of an eigenbundle associated to the asymptotic operator $P_\delta(t)$ for $\PP_\delta(t)$.
\end{itemize}
The bulk-edge correspondence is a general principle that formally asserts that these two indexes should be equal. Although many proofs are known for discrete models, e.g. tight-binding \cite{EGS,GP,PS,ASV,Ba,Sha,Br,GS,GT,ST} the problem is widely open for the continuum. Corollary \ref{cor:2} is a key tool in computing the edge index for the dislocation systems $\PP_\delta(t)$. This provides a second \textit{continuous} setting where the bulk-edge correspondence is shown to hold rigorously -- the first one being the Quantum Hall effect \cite{Taa}. In particular, the zero mode of $\Di$ seeds a \textit{topologically protected} mode of $\PP_\delta$: this state persists under large spatially localized deformations of $\PP_\delta(t)$. 

The bulk-edge correspondence has stimulated the search for accurate estimates on the number of eigenvalues of $\PP_\delta$ in the gap $\GG_\delta$. 
Previous techniques -- developed in \cite{FLTW2} -- could not produce the precise counting needed to compute the edge index. They eventually relied on the inverse function theorem, which \textit{needs} approximate defect states as starting points to construct exact ones.

\begin{center}
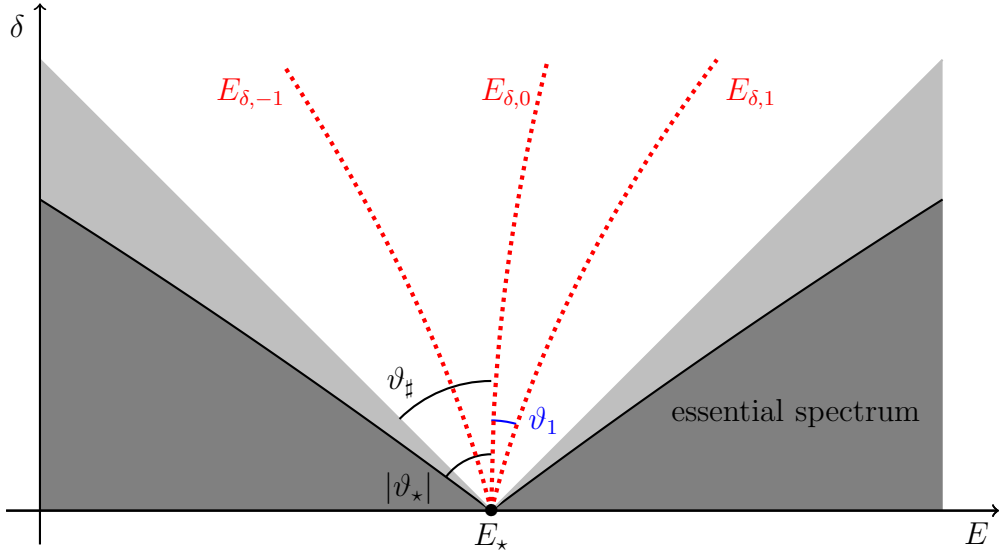
\begin{figure}
\begin{tikzpicture}
           \fill [lightgray, domain=4*1.5:0, variable=\x]
      (-4*1.5, 0)
      -- plot ({-\x}, {\x})
      -- (0, 0)
      -- cycle;
           \fill [lightgray, domain=4*1.5:0, variable=\x]
      (4*1.5, 0)
      -- plot ({\x}, {\x})
      -- (0, 0)
      -- cycle;

           \fill [gray, domain=4*1.5:0, variable=\x]
      (-4*1.5, 0)
      -- plot ({-\x}, {.75*\x - \x^2/100})
      -- (0, 0)
      -- cycle;
           \fill [gray, domain=4*1.5:0, variable=\x]
      (4*1.5, 0)
      -- plot ({\x}, {.75*\x- \x^2/100})
      -- (0, 0)
      -- cycle;

           \draw [thick,domain=4*1.5:0, variable=\x]
 plot ({\x}, {.75*\x- \x^2/100});
           \draw [thick,domain=4*1.5:0, variable=\x]
 plot ({-\x}, {.75*\x- \x^2/100});

      \draw[thick,lightgray] (0,0) -- (-4*1.5,4*1.5);
      \draw[thick,lightgray] (0,0) -- (4*1.5,4*1.5);

      \draw[thick] (0,1.15*1.5) arc (90:135:1.15*1.5);
      \draw[thick] (0,.5*1.5) arc (90:143:.5*1.5);
      \draw[thick,blue] (0,.8*1.5) arc (90:74:.8*1.5);

\node[blue] at (.7,1.2){$\vartheta_1$};
\node at (-1.2,1.7) {$\vartheta_\sharp$};
\node at (-1.1,.3) {$|\vartheta_\star|$};
\node at (2.7*1.5,1.3) {essential spectrum};
\node[red] at (2.3*1.5,3.7*1.5) {$E_{\delta,1}$};
\node[red] at (0.2,3.7*1.5) {$E_{\delta,0}$};
\node[red] at (-3.2,3.7*1.5) {$E_{\delta,-1}$};

   \draw[ultra thick,domain=0:.5,smooth,variable=\x,red,dotted] plot ({\x*1.5},{5.65*sqrt(\x)*1.5});
       \draw[ultra thick,domain=.25:2.25,smooth,variable=\x,red,dotted] plot ({(\x-.25)*1.5},{(4*sqrt(\x)-2)*1.5});
       \draw[ultra thick,domain=.25:2.09,smooth,variable=\x,red,dotted] plot ({(-\x+.25)*1.5},{(4*sqrt(\x)-2 + (\x-.25)*(\x-.25)/20)*1.5});

      \draw[thick,->] (-4.3*1.5,0) -- (4.5*1.5,0);
      \draw[thick,->] (-4*1.5,-.3*1.5) -- (-4*1.5,4.5*1.5);
      \draw[thick,->] (-4.3*1.5,0) -- (4.5*1.5,0);

\node at (-4*1.5-.3,4.3*1.5) {$\delta$};
\node at (4.3*1.5,-.3) {$E$};
\node at (0,-.35) {$E_\star$};
\node at (0,0) {$\bullet$};
\end{tikzpicture}
{\caption{Eigenvalues of $\Di$ seed eigenvalues of $\PP_\delta$ (red dotted curves) from the Dirac energy $E_\star$ into the gap $\GG_\delta$  as $\delta$ increases. The slopes of these curves at $\delta = 0$ equal the eigenvalues of $\Di$. The essential spectrum corresponds to the dark-gray region and the ``unexplored" zone $\tvar \delta < |E-E_\star| < \tvar \delta$ is the light-gray region.}\label{fig:3}}
\end{figure}
\end{center} 

\vspace*{-8mm}

The papers \cite{FLTW1,FLTW2} served as a model to construct edge states in continuous honeycomb structures with line defects \cite{FLTW3,FLTW4,LWZ}. This proved predictions of Raghu--Haldane \cite{RH}. We will similarly extend the techniques of this article to the honeycomb case. We expect that it will lead to topological results; see e.g. \cite{HR1} for predictions. 

We hope to adapt our methods to deal with failures of the {\it spectral no-fold condition} \cite[\S1.3]{FLTW3}. Although parity breaking ensures that a local $L^2_\xi$-spectral gap opens for $\xi$ near $\xi_\star$, in higher dimensions this may not induce a $L^2$-gap because of global aspects of dispersion surfaces.\footnote{The one-dimensional setting is really rigid: the underlying equation is a second-order ODE. This implies monotonicity of the dispersion curves -- see \cite[Theorem XIII.89]{RS}. In particular, the no-fold condition always holds in 1D.}
Fefferman, Lee-Thorp and Weinstein \cite[\S1.4]{FLTW3} conjectured that there should nonetheless exist long-lived edge states. 
Their multiscale expansion produces arbitrarily accurate quasimodes; this formally indicate existence of resonances near the Dirac energy. We refer to 
 \cite{SV,TZ,St1,St2,Ga1} for the relation between quasimodes and resonances in other settings; and to \cite{GeS} for their time-dependent dynamics.

We speculate that some of the results presented here have analogs in random situations where averaging effect arise -- such as the settings of \cite{BZ1,BZ2,BGu1,BG2,BG3,Dr4}. The bulk-edge correspondence, for instance, has been investigated in the random context for the Quantum Hall effect \cite{Taa}. In our situation the discussion has remained speculative: it is not clear what random model to consider.\\

\noindent {\bf Notations.} We will use the following notations:
\begin{itemize}
\item $\C^+=\{z\in\C: \Im z>0\}$ and $\Ss^1 = \{ z \in \C : |z| = 1\}$.
\item If $z \in \C$, $\bar{z}$ is its complex conjugate and $\Dd(z,r)=\{\zeta\in\C\ :\ |\zeta-z|<r\}$.
\item If $\HH$ is a Hilbert space and $\psi \in \HH$, we write $|\psi|_\HH$ for the norm of $\HH$; if $A : \HH \rightarrow \HH$ is a bounded operator of $\HH$, the operator norm of $A$ is
\begin{equation}
\| A \|_\HH \de \sup \left\{ |A\psi|_\HH : |\psi|_\HH=1 \ \right\}.
\end{equation}
\item If $A$ is an operator, $\Sigma(A)$ denotes the spectrum of $A$; $\Sigma_\ess(A)$ denotes the essential spectrum of $A$; $\Sigma_\pp(A)$ denotes the pure point spectrum of $A$.
\item If $\psi_\epsi \in \HH$ (resp. $A_\epsi : \HH \rightarrow \HH$ is a linear operator) and $f : \R \rightarrow \R$, we write $\psi_\epsi = O_\HH\big(f(\epsi)\big)$ $\big($resp. $A_\epsi = \OO_\HH(f(\epsi))\big)$ when there exists $C > 0$ such that $|\psi_\epsi|_\HH \leq Cf(\epsi)$ (resp. $\|A_\epsi\|_\HH  \leq C f(\epsi)$). 
\item $D_x$ is the operator $\frac{1}{i}\p_x$.
\item To a kernel distribution $K(x,y)$ taking values in $\R^2$, we associate the operator 
\[ (Kf)(x)=\int_\R K(x,y)f(y)dy.\] 
\item $L^2$ denotes the space of square-summable functions on $\R$ and $H^k$, for $k\in\R$,
are the classical Sobolev spaces. 
\item The space $L^2_\xi$ consists of $\xi$-quasiperiodic functions:
\begin{equation}
L^2_\xi \de \left\{ u\in L^2_\loc : \ u(x+1) = e^{i\xi} u(x) \right\}.
\end{equation}
It splits orthogonally as $L^2_\xi = L^2_{\xi,\ev} \oplus L^2_{\xi,\od}$ with
\begin{equations}
L^2_{\xi,\ev} \de \left\{ u \in L^2_\loc : \ u(x+1/2,\xi) = e^{i\xi/2} u(x)\right\}, 
\\
 L^2_{\xi,\od} \de \left\{ u \in L^2_\loc : \ u(x+1/2,\xi) = -e^{i\xi/2} u(x)\right\}.
\end{equations}
\item $V$, $W$ always denote potentials that are smooth, real-valued and such that $V(x+1/2) = V(x)$ and $W(x+1/2) = -W(x)$. 
\item $P_0$ is the operator $D_x^2\ + V$.
\item $P_{\pm\delta}$ is the operator $D_x^2 + V \pm \delta\cdot W$.
\item $\PP_\delta$ is the operator  $D_x^2 + V +\delta\cdot \kappa(\delta \ \cdot)\cdot W$
\item $(\pi,E_\star)$ is a Dirac point of $P_0$ with a Dirac eigenbasis $(\phi_+^\star,\phi_-^\star) \in L^2_{\pi,\ev} \times L^2_{\pi,\od}$.
 \item The Pauli matrices are
\begin{equation}\label{eq:1f}
\bsu = \matrice{0 & 1 \\ 1 & 0}, \ \ \bsd = \matrice{0 & -i \\ i & 0}, \ \  \bst = \matrice{1 & 0 \\ 0 & -1}.
\end{equation}
These matrices satisfy $\bsj^2 = \Id_2$ and $\bsi \bsj = -\bsj \bsi$ for $i \neq j$.
\item $\nu_\star = 2\blr{D_x\phi_+^\star,\phi_+^\star}$; $\var_\star = \blr{\phi_+^\star,W  \phi_-^\star}$; and
\begin{equation}
\bss \de \matrice{0 & \ove{\var_\star} \ \\ \var_\star & 0} = \Re(\var_\star) \bsu + \Im(\var_\star) \bsd.
\end{equation}
\item $\Di$ is is the operator $\nu_\star \bst D_x + \bss \kappa$; $\Di_\pm$ is the operator $\nu_\star \bst D_x \pm \bss$.
\item If $E,F \subset \C$, $\dist(E,F)$ denotes the Euclidean distance between $E$ and $F$.
\end{itemize}

\noindent{\bf Acknowledgments.} This research was supported in part by 
National Science  Foundation grants DMS-1800086 (AD), DMS-1265524 (CF), DMS-1412560, DMS-1620418 (MIW) and Simons Foundation Math + X Investigator Award \#376319 (MIW and AD). The authors thank A. B.  Watson for very stimulating discussions and S. Dyatlov for suggesting a more conceptual proof to Proposition \ref{prop:1b}.

\section{Preliminaries}

\subsection{Floquet--Bloch theory} We review here how to study periodic operators from their Floquet--Bloch representation. Basic references are \cite[Chapter XIII]{RS} and \cite{Ku}. We equip the space $L^2_\xi$ of $\xi$-quasiperiodic functions 
\begin{equation}
L^2_\xi \de \left\{ f \in L^2_\loc(\R) : u(x+1) = e^{i\xi} u(x) \right\} 
\end{equation}
with the Hermitian inner product $\left\langle f,g\right\rangle_{L^2_\xi} = \int_0^1 \ove{f}g$ 
and the norm $|f|_{L^2_\xi}^2 = \int_0^1 |f(x)|^2 dx$. In particular, $L^2_0$ is the space of periodic square-summable functions.

Assume that $T$ is a $1$-periodic operator, i.e. $f\in\textrm{dom}(T)$ implies $(Tf)(\cdot+1)=T \left(f(\cdot+1)\right)$.  Then $T$ maps $L^2_\xi$ to itself and we denote this operator by $T(\xi)$. If $f \in C_0^\infty(\R,\C)$, then $f$ has a $L^2_\xi$-decomposition, based on the Fourier inversion formula:
\begin{equation}
f(x) = \ii \tf(\xi,x)  d\xi, \ \ \tf(\xi,x) \de e^{i\xi x}\ \sum_{k \in \Z} \hf(\xi-2k\pi) e^{-2\pi i k x } \in L^2_\xi. 
\end{equation}
The action of $T$ on $L^2$ can be studied using the reconstruction formula 
\begin{equation}
Tf(x) = \ii \big(T(\xi)\tf(\xi,\cdot)\big)(x) d\xi.
\end{equation} 
This formula extends to all $f \in L^2(\R,\C)$ by density. We will write
\begin{equation}\label{eq:2j}
 T = \frac{1}{2\pi}\ \int_{[0,2\pi]}^\oplus d\xi\ T(\xi),
 \end{equation}
emphasizing that each $T(\xi)$ acts fiberwise on $L^2_\xi$.

If now $T=D_x^2 + \VV$ is a Schr\"odinger operator 
where $\VV \in C^\infty(\R,\R)$ is $1$-periodic then  the resolvent of $T(\xi)$ is selfadjoint and compact. The spectrum of $T(\xi)$ consists of point eigenvalues
$E_1(\xi)\le \dots \le E_j(\xi)\le\cdots$, listed with multiplicity  and 
 tending to $+\infty$. The curves $\xi\mapsto E_j(\xi)$ obtained by varying $\xi\in[0,2\pi]$ are called {\it dispersion curves} of the periodic operator $T$. The collection of $L^2_\xi$-spectra of $T(\xi)$ sweeps out the $L^2$-spectrum of $T$:
 \begin{equation}
 \Sigma_{L^2}(T) = \bigcup_{\xi \in [0,2\pi]} \Sigma_{L^2_\xi}(T) = \big\{ E_j(\xi) : j \geq 1, \ \xi \in [0,2\pi]\big\}.
 \end{equation}

We will use the following stability result for dispersion surfaces. It follows from \cite[Appendix A]{FW2}, a general stability result for selfadjoint eigenvalues problems.

\begin{lem}\label{lem:1m} Let $\VV$ and $\WW$ be two periodic potentials. Denote by $E_{\VV,\ell}(\xi)$ (resp. $E_{\VV+\WW,\ell}(\xi)$) the $\ell$-th $L^2_\xi$-eigenvalue of $D_x^2+\VV$ (resp. $D_x^2+\VV+\WW$). For any $\ell\ge1$, there exists a constant $C(\ell)$ such that 
\begin{equation}
 \big| E_{\VV+\WW,\ell}(\xi) - E_{\VV,\ell}(\xi) \big|\ \leq\ C(\ell) |\WW|_\infty.
\end{equation}
\end{lem}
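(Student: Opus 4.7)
The statement is exactly the selfadjoint version of Weyl's monotonicity inequality for eigenvalues of compact-resolvent operators differing by a bounded perturbation, and the natural tool is the Courant--Fischer min--max principle.  On each fiber $L^2_\xi$, both operators $T_\VV(\xi)=D_x^2+\VV$ and $T_{\VV+\WW}(\xi)=D_x^2+\VV+\WW$ are selfadjoint with the same form domain $H^1_\xi$ (since $\WW\in L^\infty$ is a bounded quadratic-form perturbation) and both have compact resolvent.  Their $L^2_\xi$-spectra therefore consist of discrete eigenvalues $E_{\bullet,1}(\xi)\le E_{\bullet,2}(\xi)\le\dots$ admitting the variational characterization
\begin{equation}
E_{\bullet,\ell}(\xi)\ =\ \inf_{\substack{V\subset H^1_\xi\\ \dim V=\ell}}\ \sup_{u\in V\setminus\{0\}}\ \frac{\langle u,T_\bullet(\xi)u\rangle_{L^2_\xi}}{|u|^2_{L^2_\xi}}.
\end{equation}

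\textbf{Core step.}  The Rayleigh quotients of $T_\VV(\xi)$ and $T_{\VV+\WW}(\xi)$ differ by $\langle u,\WW u\rangle_{L^2_\xi}/|u|^2_{L^2_\xi}$, and the Cauchy--Schwarz bound $|\langle u,\WW u\rangle_{L^2_\xi}|\le|\WW|_\infty |u|^2_{L^2_\xi}$ is uniform in $u$.  Inserting into the min--max formula an $\ell$-dimensional trial space $V$ that nearly achieves the infimum for $E_{\VV,\ell}(\xi)$ gives
\begin{equation}
E_{\VV+\WW,\ell}(\xi)\ \le\ E_{\VV,\ell}(\xi)+|\WW|_\infty.
\end{equation}
Reversing the roles of $\VV$ and $\VV+\WW$ yields the matching lower bound, producing the lemma with $C(\ell)=1$ --- in fact uniform in $\ell$ and in $\xi$.

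\textbf{Main difficulty.}  There is essentially no obstacle: the only points to verify are that (i) both operators share the form domain $H^1_\xi$, which is automatic because $\WW$ is bounded and $1$-periodic (so compatible with the quasiperiodic boundary condition), and (ii) the min--max formula is valid, which follows from the standard spectral theory of semibounded selfadjoint operators with compact resolvent on $L^2_\xi$ (or equivalently on $L^2(\R/\Z)$ after the unitary reduction $u(x)\mapsto e^{-i\xi x}u(x)$ that conjugates $T_\bullet(\xi)$ to $(D_x+\xi)^2+\VV(\pm\WW)$).  The appeal to \cite[Appendix A]{FW2} in the paper presumably packages this argument in a form that is clearly uniform in $\xi$ and $\ell$, but the argument itself is standard perturbation theory.
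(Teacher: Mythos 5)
Your proof is correct and complete. The paper does not actually supply its own proof of this lemma — it simply cites a general stability result for selfadjoint eigenvalue problems from \cite[Appendix~A]{FW2} — so you have given something the paper omits: a self-contained two-line argument via Courant--Fischer. The observation that the constant can be taken to be $C(\ell)=1$, uniformly in $\ell$ and $\xi$, is correct (this is just Weyl's inequality $|E_\ell(T)-E_\ell(T')|\le\|T-T'\|$ for selfadjoint operators with compact resolvent sharing a form domain); the lemma's statement with an $\ell$-dependent constant is simply more cautious than necessary. The two points you flag as needing verification — common form domain $H^1_\xi$ and validity of min--max after the unitary reduction to $L^2(\R/\Z)$ — are indeed the only things to check, and both are standard. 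No gaps.
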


\subsection{The spaces $L^2_{\xi,\ev}$ and $L^2_{\xi,\od}$}\label{sec:2.1} Fix $\xi \in [0,2\pi]$. The operator $\SSS$ is unitary on $L^2_\xi$ and $\SSS^2 = e^{i\xi} \cdot \Id_{L^2_\xi}$. Hence,  $\SSS$ has eigenvalues $\big\{e^{i\xi/2},-e^{i\xi/2}\big\}$. Since  $\SSS$ is a normal operator on $L^2_\xi$, there exists an orthogonal decomposition of $L^2_\xi$ in terms of eigenspaces of $\SSS$:
 \begin{equation}\label{eq:1b}
 L^2_\xi = L^2_{\xi,\ev} \oplus L^2_{\xi,\od},  \ \ \ \ L^2_{\xi,\ev} \de \ker\left(\SSS-e^{i\xi/2}\right), \ \ \ \ L^2_{\xi,\od} \de \ker\left(\SSS+e^{i\xi/2}\right).
\end{equation}
Functions in $L^2_{\xi,\ev}$ (resp. $L^2_{\xi,\od}$) admit even-indexed Fourier series representations (resp. odd-indexed Fourier series representations) -- see \cite[Definition 3.3]{FLTW2}. This explains the subscripts e/o. 

When $\xi \in (0,2\pi)$, these spaces are also related via complex conjugation: $\ove{L^2_{\xi,\ev}} = L^2_{2\pi-\xi,o}$. Indeed, if $f \in L^2_{\xi,\ev}$ then
\begin{equation}
\ove{f(x+1/2)} = e^{-i\xi/2} \ove{f(x)} = -e^{i(2\pi-\xi)/2}\ove{f(x)},
\end{equation}
which means that $\of \in L^2_{2\pi-\xi,o}$.

We recall that $P_0 = D_x^2 + V$, where $V \in C_0^\infty(\R,\R)$ is one-periodic and satisfies $\SSS V = V$. Because $P_0$ is $\SSS$-invariant, we can regard  $P_0$ as an operator acting on $L^2_{\xi,\ev}$ or $L^2_{\xi,\od}$. When $\xi \in (0,2\pi)$ the eigenvalues of $P_0(\xi)$ on  $L^2_{\xi,\ev}$ and on  $L^2_{\xi,\od}$ are simple. Indeed, since $V$ is real-valued,
\begin{equation}
(P_0-E)\psi = 0, \ \ \psi \in L^2_{\xi,\ev} \ \ \Rightarrow \ \ (P_0-E)\opsi = 0, \ \ \ \ \opsi \in L^2_{2\pi-\xi,o}.
\end{equation}
If $E$ were a $L^2_{\xi,\ev}$-eigenvalue of $P_0(\xi)$ of multiplicity at least $2$, then the solution space of the ODE $(P_0(\xi)-E)u=0$ would have dimension at least $4$. This is impossible because the dimension of this space is at most $2$.

\subsection{$P_0$ and Dirac points}\label{sec:4.2} We recall that $V \in C^\infty(\R,R)$ is $1$-periodic and satisfies $\SSS V = V$. The operator $P_0(\xi)$ denotes the operator $P_0 = D_x^2 +V$ acting in the space $L^2_\xi$.
Let  $\lambda_{0,j}(\xi)$ denote the $j$-th eigenvalue  (listed with multiplicity) of $P_0(\xi)$. We are interested in the situation
 where two dispersion curves intersect at a quasi-momentum / energy pair $(\pi,E_\star)$.
 
\begin{definition}\label{def:1}
We say that $P_0$ has a Dirac point at the  quasi-momentum / energy pair $(\pi,E_\star)$ if there exist an integer $j_\star$ and a constant $\nu_F>0$ such that
\begin{equations}
\lambda_{0,j_\star}(\xi) = E_\star - \nu_F \cdot |\xi-\pi| + O(\xi-\pi)^2, \\
\lambda_{0,j_\star+1}(\xi) = E_\star + \nu_F \cdot |\xi-\pi| + O(\xi-\pi)^2.
\end{equations}
\end{definition}

\begin{rmk} The expression ``Dirac point'' typically refers to the conical intersection of dispersion surfaces for Schr\"odinger operators  in two or more dimensions. They arise e.g. for honeycomb structures  -- see \cite{CV,BC,Le} and notably \cite{FW} which shows that they generically exist. In \cite[Appendix D]{FLTW2} the authors show that 1D Dirac points also arise generically in one-dimensional lattices with an extra symmetry. Watson \cite[Appendix B.1]{Watson:17} gives a proof that in one dimension every eigenvalue $\lambda_{0,j}(\pi)$ of $P_0(\pi)$ corresponds to a Dirac point $(\pi,\lambda_{0,j}(\pi))$; this was used in \cite{WW}. We provide a simpler proof in \cite[Appendix A.2]{Dr}.

 The notation $\nu_F$ is used since this constant plays the role of the Fermi velocity arising in mathematical theory of graphene; see, for example, \cite{CGP,FW,FLW6}.
\end{rmk}

\begin{prop}\label{prop:1w} Let $(\pi,E_\star)$ denote a Dirac point of $P_0$ in the sense of Definition \ref{def:1}. Then for each $\xi \in (0,2\pi)$, there exist $L^2_\xi$-normalized eigenpairs $(\lambda_+(\xi),\phi_+(\xi))$ and $(\lambda_-(\xi),\phi_-(\xi))$ of $P_0(\xi)$ with the following properties:
\begin{enumerate}
\item $\phi_+(\xi) \in L^2_{\xi,\ev}$ and $\phi_-(\xi) \in L^2_{\xi,\od}$.
\item If $\phi_+^\star \de \phi_+(\pi)$ and $\phi_-^\star \de \phi_-(\pi)$ then $\ove{\phi_+^\star} = \phi_-^\star$.
\item The maps $\xi \mapsto (\lambda_+(\xi),\phi_+(\xi))$ and $\xi \mapsto (\lambda_-(\xi),\phi_-(\xi))$ are real-analytic.
\item For $\xi$ in a neighborhood of $\pi$,
\begin{equation}
\begin{matrix}
\lambda_+(\xi) = E_\star + \nu_\star \cdot (\xi-\pi) + O(\xi-\pi)^2,  \\
\lambda_-(\xi) = E_\star - \nu_\star \cdot (\xi-\pi) + O(\xi-\pi)^2,
\end{matrix} \ \ \ \ \ \nu_\star \de 2\blr{\phi_+^\star,D_x\phi_+^\star}.\label{pm-disp}
\end{equation}
\end{enumerate}
Furthermore, if $(\mu_+(\xi),\psi_+(\xi))$ and $(\mu_-(\xi),\psi_-(\xi))$ are other normalized eigenpairs of $P_0(\xi)$ satisfying $(1)$--$(4)$ then 
\begin{equation}\label{eq:1g}
\textrm{for some}\  \w \in \Ss^1, \textrm{we have:}\ \ \w \cdot \phi_+^\star =  \psi_+(\pi), \ \ \ow \cdot \phi_-^\star = \psi_+(\pi).
\end{equation}
\end{prop}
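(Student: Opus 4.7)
The plan is to exploit the symmetry decomposition $L^2_\xi = L^2_{\xi,\ev} \oplus L^2_{\xi,\od}$ from Section \ref{sec:2.1}. Since $P_0$ commutes with $\SSS$, each fiber $P_0(\xi)$ restricts to both $L^2_{\xi,\ev}$ and $L^2_{\xi,\od}$, and by the argument in Section \ref{sec:2.1} these restrictions have simple spectrum for all $\xi \in (0,2\pi)$. The Dirac point condition of Definition \ref{def:1} forces $E_\star$ to be an $L^2_\pi$-eigenvalue of $P_0(\pi)$ of multiplicity at least $2$; combined with simplicity on each symmetry subspace, this pins the multiplicity at exactly $2$ overall and exactly $1$ on each of $L^2_{\pi,\ev}$ and $L^2_{\pi,\od}$. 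Hence we may speak unambiguously (up to phase) of $L^2_\pi$-normalized eigenvectors $\phi_+^\star \in L^2_{\pi,\ev}$ and $\phi_-^\star \in L^2_{\pi,\od}$ of $P_0(\pi)$ for the eigenvalue $E_\star$.

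\textbf{Real-analytic branches and slopes.} To obtain real-analytic branches near $\pi$, I would pass to a Bloch gauge: the map $\phi \mapsto e^{-i\xi\bullet}\phi$ is unitary from $L^2_\xi$ to $L^2_0$ and conjugates $P_0(\xi)$ to the analytic type-A family $(D_x+\xi)^2+V$ acting on the fixed fiber $L^2_0$. This gauge sends $L^2_{\xi,\ev}$ onto the $\xi$-independent subspace of $\tfrac{1}{2}$-periodic functions in $L^2_0$ (and $L^2_{\xi,\od}$ onto the anti-$\tfrac{1}{2}$-periodic ones), both invariant for the whole family. Since $E_\star$ is a simple eigenvalue on each restricted subspace at $\xi = \pi$, Kato's theorem on analytic perturbations produces real-analytic branches $\xi \mapsto (\lambda_\pm(\xi), u_\pm(\xi))$ in a neighborhood of $\pi$; undoing the gauge and normalizing in $L^2_\xi$ yields eigenpairs $(\lambda_\pm(\xi), \phi_\pm(\xi))$ satisfying (1) and (3). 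The slopes follow from Hellmann--Feynman: since $\p_\xi\big((D_x+\xi)^2+V\big) = 2(D_x+\xi)$,
\begin{equation}
\lambda_\pm'(\xi) \ =\ 2\lr{u_\pm(\xi),(D_x+\xi)u_\pm(\xi)} \ =\ 2\lr{\phi_\pm(\xi),D_x\phi_\pm(\xi)},
\end{equation}
which at $\xi = \pi$ gives $\lambda_+'(\pi) = \nu_\star$.

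\textbf{Conjugation relation, sign of $\lambda_-'(\pi)$, and uniqueness.} Property (2) comes from the observation that $V$ is real and complex conjugation exchanges $L^2_{\pi,\ev}$ and $L^2_{\pi,\od}$ (Section \ref{sec:2.1}): $\ove{\phi_+^\star}$ is therefore a normalized $E_\star$-eigenfunction of $P_0(\pi)$ in $L^2_{\pi,\od}$, so by simplicity $\ove{\phi_+^\star} = \w \phi_-^\star$ for some $\w \in \Ss^1$; absorbing $\w$ into the (free) phase of $\phi_-^\star$ yields $\ove{\phi_+^\star} = \phi_-^\star$. A short calculation using $\ove{D_x f} = -D_x \ove{f}$ and self-adjointness of $D_x$ then gives $\lr{\phi_-^\star, D_x \phi_-^\star} = -\lr{\phi_+^\star, D_x \phi_+^\star}$, so $\lambda_-'(\pi) = -\nu_\star$, completing (4). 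That $\nu_\star \neq 0$ is forced by comparing with Definition \ref{def:1}, which requires $\nu_F > 0$; indeed $|\nu_\star|$ must equal $\nu_F$. The uniqueness assertion \eqref{eq:1g} is then immediate from the one-dimensionality of each symmetry-projected $E_\star$-eigenspace: any other normalized pair $(\psi_+(\pi),\psi_-(\pi))$ satisfying (1)--(4) must take the form $\psi_\pm(\pi) = \w_\pm \phi_\pm^\star$ for some $\w_\pm \in \Ss^1$, and applying the conjugation relation in (2) to both pairs forces $\w_- = \ove{\w_+}$. The main subtlety I would watch when carrying out this plan is the indexing: the $\pm$ branches of Proposition \ref{prop:1w} are labeled by $\SSS$-symmetry type rather than by ordering, so depending on $\sgn \nu_\star$ the analytic curve $\lambda_+$ may coincide with $\lambda_{0,j_\star+1}$ on one side of $\pi$ and with $\lambda_{0,j_\star}$ on the other, but this does not affect any conclusion of the proposition.
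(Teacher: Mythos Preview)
Your proposal is correct and follows essentially the same approach as the paper's proof: decompose the Dirac eigenspace along $L^2_{\pi,\ev}\oplus L^2_{\pi,\od}$, use simplicity on each symmetry sector together with the analytic conjugation $e^{-i\xi x}P_0(\xi)e^{i\xi x}=(D_x+\xi)^2+V$ to produce real-analytic branches, compute the slopes via Hellmann--Feynman, and obtain (2) and uniqueness from complex conjugation exchanging the sectors. The only cosmetic difference is that the paper sets $\phi_-^\star \de \ove{\phi_+^\star}$ at the outset rather than choosing $\phi_-^\star$ independently and then absorbing a phase; your explicit remark that the Bloch gauge sends $L^2_{\xi,\ev}$ to the fixed space of $\tfrac12$-periodic functions in $L^2_0$ is a useful clarification that the paper leaves implicit.
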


\begin{proof} 1. Let $(\pi,E_\star)$ be a Dirac point of $P_0$. Because of the decomposition \eqref{eq:1b}, we can write
\begin{equation}
\ker(P_0(\pi)-E_\star) = \left( \ker(P_0(\pi)-E_\star) \cap L^2_{\xi,\ev} \right) \oplus \left( \ker(P_0(\pi)-E_\star) \cap L^2_{\xi,\od} \right).
\end{equation}
Since $(\pi,E_\star)$ is a Dirac point of $P_0$, the space on the LHS has dimension $2$. The subspaces on the RHS have dimension at most $1$ -- see the discussion at the end of \S\ref{sec:2.1}.  We deduce that 
\begin{equation}\label{eq:1d}
\dim \left( \ker(P_0(\pi)-E_\star) \cap L^2_{\xi,\ev} \right) = \dim \left( \ker(P_0(\pi)-E_\star) \cap L^2_{\xi,\od} \right) = 1.
\end{equation}
Let $\phi_+^\star$ be a normalized element of $\ker(P_0(\pi)-E_\star) \cap L^2_{\xi,\ev}$; define $\phi_-^\star = \ove{\phi_+^\star}$.

2. Because of Step 1, $E_\star$ is a $L^2_{\xi,\ev}$-eigenvalue of $P_0(\pi)$. For $\xi \in (0,2\pi)$ the $L^2_{\xi,\ev}$-eigenvalues of $P_0$ are simple; and $P_0(\xi)$ is real analytic in $\xi$ in the sense that
\begin{equation}
e^{-i\xi x} P_0(\xi) e^{i\xi x} = (D_x+\xi)^2+V : L^2_0 \rightarrow L^2_0
\end{equation}
is real-analytic in $\xi$. Therefore standard results on perturbation of eigenvalues show that for $\xi \in (0,2\pi)$, $(E_\star, \phi_+^\star)$ seeds a unique  $L^2_{\xi,\ev}$-eigenpair 
$(\lambda_+(\xi),\phi_+(\xi))$ of $P_0$ which is real-analytic in $\xi$. Similarly, $(E_\star, \phi_-^\star)$ seeds a unique  $L^2_{\xi,\od}$-eigenpair $(\lambda_-(\xi),\phi_-(\xi))$ of $P_0$ which is real-analytic in $\xi$. 

3. So far we have proved assertions (1)--(3) of Proposition \ref{prop:1w}. Because the eigenvalues of $P_0(\pi)$ have multiplicity at most $2$, Step 2 implies that
\begin{equation}
\{ \lambda_+(\xi),\lambda_-(\xi) \} = \{ \lambda_{0,j_\star}(\xi),\lambda_{0,j_\star+1}(\xi) \}.
\end{equation}
Since the maps $\xi \mapsto \lambda_\pm(\xi)$ are smooth, we deduce that
\begin{equations}
\lambda_+(\xi) = E_\star + \nu_\star (\xi-\pi) + O(\xi-\pi)^2,  \ \ \ \
\lambda_-(\xi) = E_\star - \nu_\star (\xi-\pi) + O(\xi-\pi)^2,
\end{equations}
where $\nu_\star$ is equal to either $\nu_F$ or $-\nu_F$. We now prove that $\nu_\star = 2\blr{\phi_+^\star,D_x\phi_+^\star}$. Let $p_+(\xi,x) = e^{-i\xi x}\phi_+(\xi,x)$. This forms a smooth family of functions in $L^2_0$ which satisfy the equation
\begin{equation}
\left((D_x+\xi)^2 + V - \lambda_+(\xi)\right) p_+(\xi) = 0.
\end{equation}
Taking the derivative with respect to $\xi$ yields
\begin{equation}\label{eq:3na}
\left((D_x+\xi)^2 + V - \lambda_+(\xi)\right) \p_\xi p_+(\xi) = \p_\xi  \lambda_+(\xi) \cdot p_+(\xi)  -  2(D_x+\xi)p_+(\xi).
\end{equation}
Setting $\xi=\pi$ and taking the scalar product of \eqref{eq:3na} with $p_+(\pi)$, we get
\begin{equation}
\blr{\left((D_x+\pi)^2 + V - \lambda_+(\pi)\right) \p_\xi p_+(\pi),p_+(\pi)} = \p_\xi \lambda_+(\pi)  -  2 \blr{(D_x+\xi)p_+(\pi),p_+(\pi)}.
\end{equation}
Observe that the LHS vanishes: $(D_x+\pi)^2 + V - \lambda_+(\pi)$ is selfadjoint and $((D_x+\pi)^2 + V - \lambda_+(\pi))p_+(\pi) = 0$. This yields (4):
\begin{equation}
 \nu_\star =  2\blr{ p_+(\pi),(D_x+\pi)p_+(\pi)} = 2\blr{ \phi^\star_+,D_x\phi^\star_+}.
\end{equation}

5. It remains to prove the characterization property \eqref{eq:1g}. Assume that $(\mu_+(\xi),\psi_+(\xi))$ and $(\mu_-(\xi),\psi_-(\xi))$ are any normalized eigenpairs of $P_0(\xi)$ satisfying (1)--(4). Then $\psi_+(\pi) \in \ker(P_0(\pi)-E_\star)$. Because of \eqref{eq:1d}, there exists $\w \in \Ss^1$ such that $\psi_+(\pi) = \w \cdot \phi_+^\star$. Because of (2), $\psi_-(\pi) = \ow \phi_+^\star$. This concludes the proof.
\end{proof}

We call the pair $(\phi_+^\star,\phi_-^\star)$ a ``Dirac eigenbasis". According to the characterization of Proposition \ref{prop:1w}, they are unique modulo a multiplicative factor in $\Ss^1$. In particular, $\nu_\star$ is invariantly defined: if we change $(\phi_+^\star,\phi_-^\star)$ to $(\w \cdot \phi_+^\star, \ow \cdot \phi_-^\star)$, with $\w \in \Ss^1$, then   $\nu_\star$ in \eqref{pm-disp} is unchanged.

\subsection{The operators $P_{\pm\delta}$}

Recall that $P_0=D_x^2+V$, where $V\in L^2_{0,\ev}$: $V(x+1/2) = V(x)$; and $D_x = \frac{1}{i}\p_x$. Let $0 \not\equiv W \in L^2_{0,\od}$: $W(x+1/2) = -W(x)$. We introduce the perturbed periodic Schr\"odinger operator
\begin{equation}
P_\delta \de D_x^2+V + \delta W.
\end{equation}
For $\delta\ne0$, $[\SSS,P_\delta] = - 2\delta \cdot W \ne0$: the perturbation $\delta W$ breaks the $\SSS$-invariance. 

For the remainder of the paper, we assume that hypothesis (H3) holds: if $(\phi_+^\star, \phi_-^\star)$ is a Dirac eigenbasis for $(\pi,E_\star)$ then
\begin{equation}\label{eq:1h}
\var_\star \de \blr{\phi_+^\star,W\phi_-^\star} \neq 0.
\end{equation}
It should be noted that $\var_\star$ is \textit{not} invariantly defined. Indeed, a change of Dirac eigenbasis $(\phi_+^\star, \phi_-^\star)$ to $(\w \cdot \phi_+^\star, \ow \cdot \phi_-^\star)$ -- with $\w \in \Ss^1$ -- transforms $\var_\star$ to $\w^2 \var_\star$. But the condition  \eqref{eq:1h} is invariant under this change.

\begin{prop}\label{prop:1p} With the above notations,
\begin{equations}
2\matrice{\blr{\phi_+^\star,D_x\phi_+^\star} & \blr{\phi_-^\star,D_x \phi_+^\star} \\ \blr{\phi_+^\star,D_x \phi_-^\star} & \blr{\phi_-^\star,D_x\phi_-^\star}}  = \nu_\star  \bst,
\\
\matrice{\blr{\phi_+^\star,W \phi_+^\star} & \blr{\phi_-^\star,W \phi_+^\star} \\ \blr{\phi_+^\star,W \phi_-^\star} & \blr{\phi_-^\star,W \phi_-^\star}} = \matrice{0 & \ove{\var_\star} \ \\ \var_\star & 0} \de \bss = \Re(\var_\star) \bsu + \Im(\var_\star) \bsd
\end{equations}
where the Pauli matrices $\bsj$ are displayed in   \eqref{eq:1f}.
\end{prop}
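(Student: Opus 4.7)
The entire proposition should follow from three structural facts about the Dirac eigenbasis recorded in \S\ref{sec:2.1} and Proposition \ref{prop:1w}: (i) $\phi_+^\star \in L^2_{\pi,\ev}$ and $\phi_-^\star \in L^2_{\pi,\od}$, which form orthogonal subspaces of $L^2_\pi$; (ii) $\phi_-^\star = \ove{\phi_+^\star}$; (iii) $\SSS V = V$ and $\SSS W = -W$. I will prove the two matrix identities separately by systematically using parity under the half-shift $\SSS$ and, where needed, complex conjugation.

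\textbf{The $D_x$-matrix.} Because $\SSS$ is a pure translation, $[\SSS,D_x]=0$. Hence $D_x$ preserves each of the $\SSS$-eigenspaces $L^2_{\pi,\ev}$ and $L^2_{\pi,\od}$, so $D_x\phi_+^\star \in L^2_{\pi,\ev}$ and $D_x\phi_-^\star \in L^2_{\pi,\od}$. Since these two subspaces are orthogonal, the off-diagonal entries $\lr{\phi_\pm^\star, D_x \phi_\mp^\star}$ both vanish. For the diagonal entries, the identity $\ove{D_x f} = -D_x \of$ combined with $\phi_-^\star=\ove{\phi_+^\star}$ gives
\begin{equation}
\lr{\phi_-^\star,D_x\phi_-^\star} \ = \ \int_0^1 \phi_+^\star \cdot D_x\ove{\phi_+^\star}\,dx \ = \ -\ove{\lr{\phi_+^\star,D_x\phi_+^\star}} \ = \ -\lr{\phi_+^\star,D_x\phi_+^\star},
\end{equation}
where the last equality uses that $D_x$ is selfadjoint (so $\lr{\phi_+^\star,D_x\phi_+^\star}\in\R$). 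In view of the definition $\nu_\star = 2\lr{\phi_+^\star,D_x\phi_+^\star}$, the $D_x$-matrix equals $\tfrac{1}{2}\nu_\star\bst$, giving the first identity after multiplication by $2$.

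\textbf{The $W$-matrix.} Since $\SSS W = -W$ and $\SSS$ acts as $\pm i$ on $L^2_{\pi,\ev/\od}$, we have $W\phi_+^\star \in L^2_{\pi,\od}$ and $W\phi_-^\star \in L^2_{\pi,\ev}$. The orthogonality of $L^2_{\pi,\ev}$ and $L^2_{\pi,\od}$ then forces the diagonal entries $\lr{\phi_\pm^\star,W\phi_\pm^\star}=0$. For the off-diagonal entries, the definition gives $\lr{\phi_+^\star,W\phi_-^\star}=\var_\star$, and the selfadjointness of multiplication by the real potential $W$ yields $\lr{\phi_-^\star,W\phi_+^\star}=\ove{\var_\star}$. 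Hence the $W$-matrix coincides with $\bss$, and the decomposition into Pauli matrices $\bss = \Re(\var_\star)\bsu + \Im(\var_\star)\bsd$ is immediate from the definitions \eqref{eq:1f}.

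\textbf{Expected difficulty.} There is no substantive obstacle here: once one invokes the parity under $\SSS$ (which forces half of the entries to vanish for free) and the conjugation symmetry $\phi_-^\star=\ove{\phi_+^\star}$ (which relates the two remaining diagonal entries of $D_x$), everything reduces to bookkeeping of signs. The only mild subtlety is the sign flip $\ove{D_x f} = -D_x \of$, which is responsible for the minus sign in the $\bst$ rather than in $\Id_2$; I would take care to verify this once.
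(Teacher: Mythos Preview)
Your proof is correct and follows essentially the same approach as the paper: both use the orthogonality $L^2_{\pi,\ev}\perp L^2_{\pi,\od}$ together with $[\SSS,D_x]=0$ and $\SSS W=-W$ to kill half the entries, and the conjugation relation $\phi_-^\star=\ove{\phi_+^\star}$ (plus $\ove{D_xf}=-D_x\of$) to handle the remaining ones. The only cosmetic difference is that you phrase the identity $\lr{\phi_-^\star,W\phi_+^\star}=\ove{\var_\star}$ via selfadjointness of multiplication by the real potential $W$, whereas the paper writes it out as a conjugation computation; these are the same argument.
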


\begin{rmk} In \cite{FLTW2}, the potentials $V$ and $W$ are assumed to be even (though the proof does not really require it). This implies  $\var_\star \in \R$. In general,  $\var_\star$ may be complex; for instance if $V \equiv 0$, $E_\star = \pi^2$ and $W(x) = 2\sin(2\pi x)$ then $\phi_+^\star(x) = e^{i\pi x}$, $\phi_+^\star(x) = e^{i\pi x}$ and $\var_\star = -i \notin \R$. \end{rmk}

\begin{proof} In order to prove this proposition, it suffices to derive the following identities:
\begin{equations}
2\blr{\phi_-^\star,D_x\phi_-^\star} = -\nu_\star, \ \ \ \ \blr{\phi_-^\star,D_x\phi_+^\star} = \blr{\phi_+^\star,D_x\phi_-^\star} = 0,
\\
\blr{\phi_-^\star,W\phi_+^\star} = \var_\star, \ \ \ \ \blr{\phi_+^\star,W\phi_+^\star} = \blr{\phi_-^\star,D_x\phi_-^\star} = 0.
\label{idens}\end{equations}
The first identity in \eqref{idens} follows from $\phi_-^\star = \overline{\phi_+^\star}$ (Proposition \ref{prop:1w})
 and $\ove{D_x} = -D_x$:
\begin{equation}
2\blr{\phi_-^\star,D_x\phi_-^\star} = -2\blr{\ove{\phi_+^\star}, \ove{D_x\phi_+^\star}} =
 -2\blr{D_x\phi_+^\star, \phi_+^\star} = -2\blr{\phi_+^\star, D_x\phi_+^\star} = -\nu_\star.
\end{equation}
The second identity is a consequence of $L^2_{\pi,\ev} \perp L^2_{\pi,\od}$, and that $D_x$ acts on both $L^2_{\pi,\ev}$ and $L^2_{\pi,\od}$. Indeed, these facts imply that $D_x \phi_+^\star$ is orthogonal to $\phi_-^\star$. 
The third identity follows from $\phi_-^\star = \ove{\phi_+^\star}$ and that $W = \oW$:
\begin{equation}
\blr{\phi_-^\star,W\phi_+^\star} = \blr{\ove{\phi_+^\star},\ove{W\phi_-^\star}} = \blr{W\phi_-^\star,\phi_+^\star} = \ove{\blr{\phi_+^\star,W\phi_-^\star}} = \ove{\var_\star}.
\end{equation}
Finally, the last identity is a consequence of $L^2_{\pi,\ev} \perp L^2_{\pi,\od}$ and $W \in L^2_{0,\od}$. These facts imply $\phi_+^\star \perp W \phi_+^\star$ and $\phi_-^\star \perp W \phi_-^\star$. This completes the proof. 
\end{proof}

 \section{From quasimodes to eigenmodes}\label{sec:4}
 
In this section we deduce Corollary \ref{cor:2}  from Theorem \ref{thm:2} by using:
\begin{itemize}
\item The formal multiscale expansion of \cite[\S 4]{FLTW2};
\item A general principle for gapped self-adjoint problems; see Lemma \ref{lem:1r}.
\end{itemize}

\subsection{The multiscale approach of \cite{FLTW1,FLTW2}.}

We review the formal multiscale analysis of \cite{FLTW1,FLTW2}. Recall that $X$ is the function space of two-scale functions:
\begin{equation}
X = \left\{ v \in C^\infty(\R^2,\C), \ \ v(x+1,y) = v(x,y), \ \ |v(x,y)| = O\left(e^{-|y|}\right) \right\}.
\end{equation}
For each eigenvalue $\var_j$ of $\Di$, the procedure of \cite{FLTW1,FLTW2}  constructs recursively:
\begin{itemize}
\item a sequence of numbers $a_m \in \R$ such that $a_0 = E_\star$ and $a_1 = \var_j$;
\item a sequence of two-scale functions $v_m(x,y) \in X$, starting with: 
\begin{equation}
v_0(x,y) = \az_+(y) \phi_+^\star(x) + \az_-(y) \phi^\star_-(x), \ \  (\Di-\var_j)\matrice{\az_+ \\ \az_-} = \matrice{0 \\ 0},
\end{equation}
\end{itemize}
such that  the following property holds:  for any positive integer $M$,  define
\begin{equations}
E_{\delta,M} \de \sum_{m=0}^M a_m \delta^m = E_\star+\var_j\delta+O\left(\delta^2\right), \\
 v_{\delta,M}(x) \de \delta^{1/2} \sum_{m=0}^M \delta^m u_m(x,\delta x)\quad \textrm{with}\quad
  |v_{\delta,M}|_{L^2}=1.\label{v-app}
\end{equations}
Then,  for any $k \in \N$, there exists a constant $C_{M,k}$ such that
\begin{equation}\label{eq:3v}
\delta \in (0,1) \ \Rightarrow \ \big|(\PP_\delta-E_{\delta,M}) v_{\delta,M}\big|_{H^k} \leq C_{M,k}  \delta^M.
\end{equation}
The energy $E_{\delta,M}$  is then an approximate eigenvalue of $\PP_\delta$ with corresponding  
approximate eigenstate or {\it quasimode} $v_{\delta,M}$.

\subsection{Abstract  quasimode theory} We discuss how
sufficiently accurate \textit{approximate} eigenpairs imply existence of \textit{genuine} eigenpairs.

\begin{lem}\label{lem:1r} Let $T$ be a selfadjoint operator on a Hilbert space $\HH$, with domain ${\rm dom}(T)$.
Assume: 
\begin{itemize}
\item There exist $E$ and  $\epsilon_0>0$ such that
$[E-\epsilon_0,E+\epsilon_0] \cap \Sigma_\ess(T)$ is empty.
\item There exist $v\in {\rm dom}(T)$
and $\epsilon \in [0,\epsilon_0)$ such that 
\begin{equations}
|v|_\HH = 1\ \ {\rm and}\  \ |(T-E)v|_\HH \leq \epsilon.
\label{q-mode}\end{equations}
\end{itemize}
Then  $T$ has a eigenvalue $\lambda$ with $|\lambda-E| \leq \epsilon$. 

 Furthermore, if there is a constant $C>0$ satisfying $C\epsilon \leq \epsilon_0$ and $T$ has no other eigenvalue in the interval $[E-C\epsilon,E+C\epsilon]$ then $T$ has an eigenvector $u$ with \[ |v-u|_\HH \leq C^{-1}.\]
\end{lem}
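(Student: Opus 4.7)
The plan is to reduce both conclusions to elementary estimates on the spectral measure $\mu_v$ of $T$ associated to $v$. The key structural input is that the hypothesis $\Sigma_\ess(T)\cap [E-\epsilon_0,E+\epsilon_0]=\emptyset$ forces the spectrum of $T$ in $[E-\epsilon_0,E+\epsilon_0]$ to consist of finitely many eigenvalues of finite multiplicity (by Weyl's characterization of the essential spectrum). The selfadjoint functional calculus then gives the master identity
\begin{equation}
 |(T-E)v|_\HH^2 \ = \ \int_\R (t-E)^2\, d\mu_v(t), \qquad  \mu_v(\R)=|v|_\HH^2=1,
\end{equation}
and the analogous identity for any vector obtained from $v$ by spectral projection.

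For the first conclusion I will argue by contradiction. If $T$ had no eigenvalue in $[E-\epsilon,E+\epsilon]$, then the finite discrete spectrum in $[E-\epsilon_0,E+\epsilon_0]$ is disjoint from $[E-\epsilon,E+\epsilon]$, hence $\dist(E,\Sigma(T))>\epsilon$ strictly (the spectrum outside $[E-\epsilon_0,E+\epsilon_0]$ being at distance at least $\epsilon_0>\epsilon$). The master identity would then yield $|(T-E)v|_\HH^2\geq \dist(E,\Sigma(T))^2>\epsilon^2$, contradicting \eqref{q-mode}.

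For the second conclusion, let $\lambda$ be the unique eigenvalue of $T$ in $[E-C\epsilon,E+C\epsilon]$ and $\Pi$ the orthogonal projection onto $\ker(T-\lambda)$. Decompose $v=u+w$ with $u\de \Pi v$ (an eigenvector for $\lambda$, possibly zero) and $w\de(\Id-\Pi)v$. Since $u$ and $w$ lie in orthogonal reducing subspaces of $T$, the Pythagorean identity combined with the master identity applied to $w$ gives
\begin{equation}
 \epsilon^2 \ \geq\ |(T-E)v|_\HH^2 \ = \ (\lambda-E)^2|u|_\HH^2+|(T-E)w|_\HH^2 \ \geq \ C^2\epsilon^2\, |w|_\HH^2,
\end{equation}
where the final inequality uses that the spectral measure of $w$ is supported in $\Sigma(T)\setminus\{\lambda\}\subset\R\setminus[E-C\epsilon,E+C\epsilon]$. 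This yields $|v-u|_\HH=|w|_\HH\leq C^{-1}$; the vector $u$ is a genuine eigenvector as soon as $C>1$, since $|u|_\HH^2\geq 1-C^{-2}>0$.

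No serious obstacle is expected: the argument is essentially a textbook consequence of the spectral theorem. The only mild subtlety is guaranteeing the \emph{strict} inequality $\dist(E,\Sigma(T))>\epsilon$ in the first step, which is exactly where the absence of essential spectrum in $[E-\epsilon_0,E+\epsilon_0]$ (hence finiteness of the discrete spectrum there) is used.
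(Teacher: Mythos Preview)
Your proof is correct and follows essentially the same route as the paper's: both use the spectral theorem to bound $\dist(E,\Sigma(T))\le\epsilon$ for the first conclusion, and an orthogonal decomposition of $v$ along the eigenspace of $\lambda$ together with the lower bound $|(T-E)w|_\HH\ge C\epsilon|w|_\HH$ on the complement for the second. Your version is in fact slightly cleaner, since projecting onto the full eigenspace $\ker(T-\lambda)$ (rather than a single normalized eigenvector) avoids an implicit simplicity assumption, and you explicitly note the caveat $C>1$ needed for $u\neq 0$.
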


The function $v$ is called an ($\epsilon$-precise) quasimode. Lemma \ref{lem:1r} shows that a quasimode energy $E$ lying in an essential spectral gap of a selfadjoint operator must be $\epsilon$-close to an eigenvalue. We postpone the proof of this well-known lemma to Appendix \ref{sec:B}.

In the next section, we will apply this lemma with parameters related to the choice of quasimode: $\epsilon_0$ of order $\delta$ and $\epsilon$ of order $\delta^M$, where $M \geq 2$. In particular, $C$ will be of order $\delta^{1-M}$ and $C^{-1}$ is of order $\delta^{M-1}$ -- hence small.

\subsection{The proof of Corollary \ref{cor:2} assuming Theorem \ref{thm:2}.}

Let $\epsi$ and $\delta_0$ be as in Theorem \ref{thm:2}. Fix $M \geq 2$, $j \in [-N,N]$ and $k = 0$. Let $E_{\delta,M}$, $v_{\delta,M}$ and $C_{M,0} = C_M$ be given by \eqref{v-app} and \eqref{eq:3v}.
   
We begin by applying the first part of Lemma \ref{lem:1r} with the choices:
  \begin{equation} 
  T=\PP_\delta,\quad E = E_{\delta,M},\quad \epsilon_0 = \epsi \delta/2,\ \epsilon = C_M \delta^M\quad {\rm and}\quad v = v_{\delta,M}.\label{ch}
  \end{equation}
  
  By \eqref{eq:3v}, for $\delta \in (0,1)$, we have:
\begin{equation}
|(T-E)v|_{L^2} \leq \epsilon.
\end{equation}
This verifies the quasimode hypothesis \eqref{q-mode} of Lemma \ref{lem:1r}

We next verify that the choices \eqref{ch} ensure that the spectral gap assumption  of Lemma \ref{lem:1r} is satisfied.
Since $E_{\delta,M} = E_\star+\delta\var_j + O(\delta^2)$, for $\delta$ sufficiently small, 
\begin{equation}
[E-\epsilon_0,E+\epsilon_0] = \left[E_{\delta,M}-\dfrac{\epsi \delta}{2}, E_{\delta,M} + \dfrac{\epsi \delta}{2}\right]  \subset \big[E_\star + \delta (\var_j-\epsi), E_\star + \delta (\var_j+\epsi)\big].
\end{equation}
In particular, Theorem \ref{thm:2} implies that
\begin{equation}
\Sigma_\ess(\PP_\delta) \cap [E-\epsilon_0,E+\epsilon_0] = \emptyset.
\end{equation}
The first conclusion of Lemma \ref{lem:1r} applies and shows that for any $M$
\[\textrm{$\PP_\delta$ has an eigenvalue $E_{\delta,j}\in[E-\epsilon, E_{\delta,M}+\epsilon] = [E_{\delta,M}-C_M \delta^M, E_{\delta,M}+C_M\delta^M]$ .}\]
 Now Theorem \ref{thm:2} asserts that 
 \[\textrm{$\PP_\delta$ has exactly one eigenvalue $E_{\delta,j}$ in $[E_{\delta,M}-C_M \delta^M, E_{\delta,M}+C_M \delta^M]$.}\]
  Since $M\ge1$ is arbitrary, this proves that $E_{\delta,j}$ has an expansion to arbitrary finite order in the small parameter $\delta$. This proves the first line of the expansion \eqref{eq:3r} in Corollary \ref{cor:2}.

We now turn to arbitrary finite order expansions of the eigenvector corresponding to $E_{\delta,j}$. We keep the choices \eqref{ch} made above. Let $C = \epsi/(2C_M\delta^{M-1}) = \epsi\delta/(2\epsilon)$. Since $E_{\delta,M} = E_\star+\delta\var_j + O(\delta^2)$,
\begin{equation}
[E-C\epsilon,E + C\epsilon] \subset \big[E_\star + \delta (\var_j-\epsi), E_\star + \delta (\var_j+\epsi)\big],
\end{equation}
as long as $\delta$ is sufficiently small. By Theorem \ref{thm:2}, $\PP_\delta$ has no other eigenvalues in the set $[E-C\epsilon,E + C\epsilon]$. The additional assumption of Lemma \ref{lem:1r} holds. Thus this lemma produces $u_{\delta,j}$ that satisfies
\begin{equation}\label{eq:1u}
\big|v_{\delta,M} - u_{\delta,j}\big|_{L^2} \leq  C^{-1} = \dfrac{2C_M \delta^{M-1}}{\epsi}.
\end{equation}
Since $M$ was arbitrary, we can replace $M$ by $M+1$ in \eqref{eq:1u} and conclude the proof of Corollary \ref{cor:2} for the case $k=0$.

In order to complete the proof of Corollary \ref{cor:2}, we must show that the $L^2$ norm in \eqref{eq:1u} can be replaced to a $H^k$-norm. Specifically, we show by induction on $k$ that for any $k\ge0$, there exists $C_{M,k}$ with
\begin{equation}\label{eq:1z}
|v_{\delta,M} - u_{\delta,j}|_{H^k} \leq  C_{k,M} \delta^M.
\end{equation}
Without loss of generality, we carry out the induction on $k$  even.  We have verified the case $k=0$ above. Assume now that \eqref{eq:1z} holds for some value of $k$ and let us show that it also holds for $k+2$. The main point is the elliptic estimate;  for any integer $s$ if $f \in H^s$ and $\PP_\delta f \in H^s$, then $f \in H^{s+2}$ and there is a  constant $A_s$ such that 
\begin{equation}
|f|_{H^{s+2}} \leq A_s \big( |f|_{H^s} + \left|\PP_\delta f\right|_{H^s} \big).
\end{equation}
The constant $A_s$ does not depend on $\delta \in (0,1)$ because the derivatives of the coefficients of the potential $V+\delta \kappa(\delta \ \cdot) W$ arising in $\PP_\delta$ are uniformly bounded in $\delta \in (0,1)$. Applying this estimate to $f = v_{\delta,M} - u_{\delta,j}$ with $s=k$  yields
\begin{equations}\label{eq:1x}
|v_{\delta,M} - u_{\delta,j}|_{H^{k+2}} \leq A \big( |v_{\delta,M} - u_{\delta,j}|_{H^k} + \left|\PP_\delta(v_{\delta,M} - u_{\delta,j})\right|_{H^k} \big)
\\
\leq A_k \big( |v_{\delta,M} - u_{\delta,j}|_{H^k} + \left|(\PP_\delta-E_{\delta,M})v_{\delta,M}\right|_{H^k} + |E_{\delta,M} - E_{\delta,j}| \cdot |u_{\delta,j}|_{H^k} \big).
\end{equations}
We used $(\PP_\delta  - E_\delta,j)u_{\delta,j}= 0$ above.
Applying and iterating the elliptic estimate for $u_{\delta,j}$ shows that there exists $B_k$ independent of $\delta$ such that $|u_{\delta,j}|_{H^k} \leq B_k$. Using this estimate together with the recursion hypothesis, \eqref{eq:3v}, and $|E_{\delta,M}-E_{\delta,j}| \leq C_M \delta^M$, we obtain from \eqref{eq:1x} that
\begin{equation}
|v_{\delta,M} - u_{\delta,j}|_{H^{k+2}} \leq A_k \cdot \big( C_{M,k} \delta^M + C_M\delta^M + B_k C_M \delta^M\big).
\end{equation}
This completes the induction and therewith the proof of Corollary \ref{cor:1}. 

\section{Resolvent of $P_\delta$ for small $\delta$ }

Recall that $V \in C^\infty(\R,\R) \cap L^2_{0,\ev}$ and $W \in C^\infty(\R,\R) \cap L^2_{0,\od}$. Let $j_\star \geq 1$ and consider the corresponding Dirac point $(\pi,E_\star) = (\pi,\lambda_{0,j_\star}(\pi))$ of $P_0 = D_x^2 + V$. Proposition \ref{prop:1w} yields a Dirac eigenbasis $(\phi_+^\star, \phi_-^\star)$; we set
\begin{equation}\label{eq:1l}
\nu_\star = 2\blr{\phi_+^\star,D_x\phi_+^\star}, \ \ \ \ \var_\star = \blr{\phi_+^\star,W\phi_-^\star}, \ \ \ \ \bss = \matrice{0 & \ove{\var_\star} \ \\ \var_\star & 0}.
\end{equation}
As mentioned above, we assume that $\var_\star \neq 0$. The operator $P_\delta=D_x^2+V+\delta W$ has a $L^2$-spectral gap near $E_\star$, of width $2|\var_\star|\delta + O(\delta^2)$. The main result of this section is an estimate on the resolvent of $P_\delta$ when the spectral parameter spans a slightly smaller gap. We introduce the operator $\Di_+$, which is the asymptotic limit of $\Di$ as $x \rightarrow + \infty$:
\begin{equation}
\Di_+ \de \nu_\star \bst D_x + \bss.
\end{equation}
We will also need the dilation operator $\UU_\delta$ defined by $\UU_\delta f(x) = \delta^{1/2} f(\delta x)$.

\begin{theorem}\label{thm:1} Let $\tvar \in (0,|\var_\star|)$. There exists $\delta_0 > 0$ such that for $\delta \in (0,\delta_0)$ and 
$\lambda \in \Dd(E_\star,\tvar \delta)$, the operator $P_\delta-\lambda$ is invertible $H^2 \rightarrow L^2$ and
\begin{equations}
(P_\delta-\lambda)^{-1} = S_\delta\left(\dfrac{\lambda-E_\star}{\delta}\right) +  \OO_{L^2}\left(\delta^{-1/3}\right), \ \ \ \
D_x (P_\delta-\lambda)^{-1} = S_\delta\left(\dfrac{\lambda-E_\star}{\delta}\right) +  \OO_{L^2}\left(\delta^{-1/3}\right),
\end{equations}
where:
\begin{align}
S_\delta(z) &= \dfrac{1}{\delta} \matrice{ \phi_+^\star \\ \phi_-^\star  }^\top \cdot    \UU_\delta\ \big( \Di_+- z\big)^{-1}\   \UU_\delta^{-1}  \cdot \ove{\matrice{ \phi_+^\star \\ \phi_-^\star  }}, \label{Sd}\\
S_\delta^D(z) &= \dfrac{1}{\delta} \matrice{ D_x\phi_+^\star \\ D_x\phi_-^\star  }^\top \cdot    \UU_\delta\  \big( \Di_+ - z\big)^{-1} \   \UU_\delta^{-1}\  \cdot \ove{\matrice{ \phi_+^\star \\ \phi_-^\star  }}.
\label{DSd}\end{align}
\end{theorem}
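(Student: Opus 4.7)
The plan is to combine Floquet--Bloch decomposition of the periodic operator $P_\delta$ with a Schur complement reduction onto the two-dimensional Dirac eigenspace. Writing $P_\delta=\frac{1}{2\pi}\int^{\oplus}_{[0,2\pi]} P_\delta(\xi)\,d\xi$, I would first fix a small neighborhood $(\pi-\varepsilon_0,\pi+\varepsilon_0)$ of $\pi$. Since $(\pi,E_\star)$ is an isolated Dirac point, for $\xi$ in this neighborhood and $\delta$ small only the two dispersion bands $\lambda_\pm(\xi)$ become near-resonant with $\lambda\in\Dd(E_\star,\tvar\delta)$; all other $L^2_\xi$-eigenvalues of $P_\delta(\xi)$ remain at uniform positive distance from $E_\star$ for $\xi\in[0,2\pi]$, so their contribution to the fiber resolvent is $\OO_{L^2}(1)$ after integration.

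For the near-resonant two-band part on each fiber I would perform a Grushin (Lyapunov--Schmidt) reduction. Define $R_-(\xi):\C^2\to L^2_\xi$ by $R_-(\xi)(c_+,c_-)=e^{i(\xi-\pi)x}(c_+\phi_+^\star+c_-\phi_-^\star)$ and let $R_+(\xi):L^2_\xi\to\C^2$ be its adjoint. For $(\xi,\delta)$ close to $(\pi,0)$ the augmented block operator $\bigl(\begin{smallmatrix}P_\delta(\xi)-\lambda & R_-\\ R_+ & 0\end{smallmatrix}\bigr)$ is invertible; the associated effective $2\times 2$ matrix $E_{-+}(\xi,\delta,\lambda)$ encodes the near-resonant part of the fiber resolvent via $-R_-E_{-+}^{-1}R_+$. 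A second-order perturbation expansion of $E_{-+}$ around $(\pi,0)$, combined with Proposition~\ref{prop:1p} and the vanishing identities $\blr{\phi_\pm^\star,W\phi_\pm^\star}=0$ and $\blr{\phi_\pm^\star,D_x\phi_\mp^\star}=0$, then gives
\begin{equation*}
E_{-+}(\xi,\delta,\lambda)=(E_\star-\lambda)\Id_2+\nu_\star(\xi-\pi)\bst+\delta\bss+O\bigl((\xi-\pi)^2+\delta|\xi-\pi|+\delta^2\bigr).
\end{equation*}

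After the change of variables $\xi-\pi=\delta\eta$ and $\lambda-E_\star=\delta z$, the leading $2\times 2$ matrix reads $\delta(\nu_\star\eta\bst+\bss-z\Id_2)$, exactly the Fourier symbol of $\delta(\Di_+-z)$. Consequently the integral $\frac{1}{2\pi}\int R_-E_{-+}^{-1}R_+\,d\xi$, re-expressed via the rescaling $\UU_\delta$, is the convolution kernel of $\delta^{-1}\UU_\delta(\Di_+-z)^{-1}\UU_\delta^{-1}$ sandwiched between multiplications by $(\phi_+^\star,\phi_-^\star)$ on the left and $\overline{(\phi_+^\star,\phi_-^\star)}$ on the right; that is precisely the formula for $S_\delta(z)$. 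Note that the identity $\nu_\star=2\blr{\phi_+^\star,D_x\phi_+^\star}$, entering the $(\xi-\pi)$-linear coefficient, is exactly the Fermi velocity of $\Di_+$. The expansion for $D_x(P_\delta-\lambda)^{-1}$ follows identically by commuting $D_x$ through the Floquet--Bloch decomposition and replacing $\phi_\pm^\star$ by $D_x\phi_\pm^\star$ in the left outer factor, producing $S_\delta^D(z)$.

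The main obstacle is the sharp error $\OO_{L^2}(\delta^{-1/3})$. Because the main term has operator norm of order $\delta^{-1}$, one must beat the trivial $O(1)$ relative bound by $\delta^{2/3}$. Writing $E_{-+}=H_0+R$ with $H_0=\nu_\star(\xi-\pi)\bst+\delta\bss+(E_\star-\lambda)\Id_2$ and $R=O((\xi-\pi)^2+\delta^2)$, the Neumann correction $H_0^{-1}RH_0^{-1}$ has pointwise norm $O\bigl((|\xi-\pi|^2+\delta^2)/(|\xi-\pi|+\delta)^2\bigr)=O(1)$; integrated, this naively gives only an $O(1)$ bound in operator norm. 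To pick up the extra factor $\delta^{2/3}$ I would split the $\xi$-integral at an intermediate scale $|\xi-\pi|\sim\delta^{1/3}$: on the inner region the explicit analytic form of $H_0^{-1}$ together with smoothness of the Bloch eigenfunctions in $\xi$ cancels the pointwise $O(1)$ bound; on the outer region the decay $\|H_0^{-1}\|=O(|\xi-\pi|^{-1})$ provides additional smallness. Balancing the two contributions yields the exponent $1/3$. One must also verify that truncation of the $\xi$-integral to $(\pi-\varepsilon_0,\pi+\varepsilon_0)$ contributes only $\OO_{L^2}(1)$, which follows from the uniform separation of the non-Dirac bands from $E_\star$.
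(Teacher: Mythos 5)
Your skeleton matches the paper's: Floquet--Bloch decomposition, reduction of each fiber $P_\delta(\xi)-\lambda$ to a $2\times2$ effective matrix (the paper uses a Schur complement with the exact $\xi$-dependent eigenpair $\phi_\pm(\xi)$, you use a Grushin problem with the frozen basis $e^{i(\xi-\pi)x}\phi_\pm^\star$ -- a nice reorganization that produces the Fourier-multiplier form directly), the rescaling $\xi=\pi+\delta\eta$, $\lambda=E_\star+\delta z$ turning $M_\delta(\xi)-\lambda$ into $\delta(\Di_+(\eta)-z)$, and the split of quasi-momenta at $|\xi-\pi|\sim\delta^{1/3}$ with the far region controlled by eigenvalue separation. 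Up to here everything is essentially the paper's argument.

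The genuine gap is in your error estimate for the inner region. You write that ``the explicit analytic form of $H_0^{-1}$ together with smoothness of the Bloch eigenfunctions in $\xi$ cancels the pointwise $O(1)$ bound,'' but this is not an argument: the operator norm of a Floquet--Bloch direct integral is the essential supremum of the fiber norms, so $\xi$-smoothness of the integrand per se buys nothing. What actually has to be estimated is the $L^2\to L^2$ norm of a kernel operator of the form
\begin{equation*}
A(x,y)=\int_\R e^{i\delta\xi(x-y)}\,s_\delta(\xi)\,\Phi(\delta\xi,x,y)\,d\xi,
\end{equation*}
where $\Phi$ collects the corrections coming from $\phi_\pm(\xi)-e^{i(\xi-\pi)\cdot}\phi_\pm^\star$ (in your Grushin setup, from $E_\pm-R_\mp$) and satisfies $\Phi(0,x,y)=0$, and $s_\delta$ is a symbol of order $-1$ supported in $|\xi|\lesssim\delta^{-2/3}$. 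This is not a fiber-by-fiber estimate; it is a Schur-test bound on the kernel, and the $\delta^{1/3}$ gain comes from two effects you never invoke: the vanishing of $\Phi$ at $\delta\xi=0$ (giving an extra factor $\delta\xi$ in the symbol), and integration by parts in $\xi$ to obtain decay in $|x-y|$, with the optimal balance occurring at the \emph{spatial} scale $L\sim\delta^{-2/3}$, not the quasi-momentum scale $\delta^{1/3}$. This is the content of the paper's Lemma \ref{lem:1a}, and without it (or an equivalent kernel estimate) your $\OO_{L^2}(\delta^{-1/3})$ claim is unsupported. A similar kernel estimate is needed a second time to remove the frequency cutoff $\chi(\delta^{2/3}\xi)$ and replace it by $1$ in the Fourier multiplier $(\Di_+-z)^{-1}$.

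Two smaller points: your assertion that truncating to a fixed neighborhood $(\pi-\varepsilon_0,\pi+\varepsilon_0)$ contributes only $\OO_{L^2}(1)$ conflates the intermediate range $\delta^{1/3}<|\xi-\pi|<\varepsilon_0$ (which contributes $\OO(\delta^{-1/3})$, the borderline term) with the genuinely far range $|\xi-\pi|>\varepsilon_0$ (which contributes $\OO(1)$); and for the $D_x(P_\delta-\lambda)^{-1}$ expansion, differentiating the kernel also produces a term where $\xi$ hits the phase, giving a factor $\delta\xi$ -- this again vanishes at $\delta\xi=0$ and is controlled by the same oscillatory-kernel lemma, a point worth making explicit.
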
 

The proof of Theorem \ref{thm:1} occupies the remainder of the section. We prove it by analyzing the resolvent $\left(P_\delta-\lambda\right)^{-1} : L^2 \rightarrow H^2$ fiberwise: 
\begin{equation}\label{eq:1a}
(P_\delta-\lambda)^{-1}= \frac{1}{2\pi}\ \int^\oplus_{[0,2\pi]}\ d\xi \left(P_\delta(\xi)-\lambda\right)^{-1}.
\end{equation}
In light of \eqref{eq:1a}, the analysis of $(P_\delta-\lambda)^{-1}$  reduces to
\begin{itemize}
\item Estimates on the fiber resolvents $(P_\delta(\xi)-\lambda)^{-1}$ -- \S\ref{sec:6.2}-\ref{sec:6.3};
\item A procedure to integrate these estimates over $[0,2\pi]$ -- \S\ref{sec:6.4}.
\end{itemize}

In \S\ref{sec:6.2}, we show that momenta away from $\pi$ do not contribute to the leading terms in \eqref{eq:1a}. In other words, the dominant contributions in $(P_\delta-\lambda)^{-1}$ must arise from the Dirac point $(\pi,E_\star)$. In \S\ref{sec:6.3}, we study these contributions specifically.

\subsection{Estimates on  $(P_\delta(\xi) - \lambda)^{-1} $ for quasi-momenta away from $\pi$}
\label{sec:6.2}

Let $\Sigma_{L^2_\xi}(P_\delta)$ denote the spectrum of  $P_\delta$ (acting on $L^2_\xi$). This spectrum consists of eigenvalues 
\begin{equation}
\lambda_{\delta,1}(\xi) \leq \cdots \leq \lambda_{\delta,j}(\xi) \leq \cdots, 
\end{equation}
listed with multiplicity. Recall that $(\pi,E_\star)$ is Dirac point of $P_0$ -- see Definition \ref{def:1}.

\begin{prop}\label{prop:1l} Let $\tvar \in (0,|\var_\star|)$. There exist $C > 0$ and $\delta_0 > 0$ such that
\begin{equations}
\delta \in (0,\delta_0), \ \ \lambda \in \Dd(E_\star,\tvar\delta), \ \ |\xi-\pi| \geq \delta^{1/3} \ \  
 \Rightarrow \ \ \left\| (P_\delta(\xi) - \lambda)^{-1}  \right \|_{L^2_\xi \rightarrow H^2_\xi} \leq C \delta^{-1/3}.
\end{equations}
\end{prop}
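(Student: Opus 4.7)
The plan is to reduce the resolvent estimate to a lower bound $\dist(\lambda,\Sigma_{L^2_\xi}(P_\delta))\geq c\,\delta^{1/3}$, then invoke selfadjointness and elliptic regularity. Since $P_\delta(\xi)$ is selfadjoint on $L^2_\xi$ with compact resolvent, its $L^2_\xi\to L^2_\xi$ resolvent norm equals
\[
\|(P_\delta(\xi)-\lambda)^{-1}\|_{L^2_\xi\to L^2_\xi}=\dfrac{1}{\dist(\lambda,\Sigma_{L^2_\xi}(P_\delta))}=\dfrac{1}{\min_{\ell\geq 1}|\lambda-\lambda_{\delta,\ell}(\xi)|}.
\]
The upgrade to the $H^2_\xi$ operator norm is routine: for $u=(P_\delta(\xi)-\lambda)^{-1}f$, the identity $D_x^2u=(\lambda-V-\delta W)u+f$, combined with the uniform boundedness of $V$, $W$, and $|\lambda|$ on $\Dd(E_\star,\tvar\delta)$, yields $|u|_{H^2_\xi}\lesssim|f|_{L^2_\xi}+|u|_{L^2_\xi}\lesssim\delta^{-1/3}|f|_{L^2_\xi}$. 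It therefore suffices to prove $\min_\ell|\lambda-\lambda_{\delta,\ell}(\xi)|\geq c\,\delta^{1/3}$ whenever $|\xi-\pi|\geq\delta^{1/3}$, $\lambda\in\Dd(E_\star,\tvar\delta)$, and $\delta$ is sufficiently small.

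I handle indices near and away from $j_\star$ separately. For $\ell\notin\{j_\star,j_\star+1\}$, I claim $|\lambda_{0,\ell}(\xi)-E_\star|\geq c_0>0$ uniformly in $\xi\in[0,2\pi]$ and in such $\ell$. Indeed, if $\lambda_{0,\ell}(\xi_0)=E_\star$ for some such $\ell$ and some $\xi_0$, then band ordering together with monotonicity of the dispersion curves on $[0,\pi]$ and $[\pi,2\pi]$ and the Dirac point structure at $(\pi,E_\star)$ force $\xi_0=\pi$, at which $E_\star$ would acquire multiplicity at least $3$. But the simplicity of $P_0(\pi)$-eigenvalues on each of $L^2_{\pi,\ev}$ and $L^2_{\pi,\od}$ (\S\ref{sec:2.1}) caps this multiplicity at $2$, a contradiction. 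Continuity and compactness on $[0,2\pi]$, together with $\lambda_{0,\ell}(\xi)\to\infty$ uniformly in $\xi$ as $\ell\to\infty$, then deliver the uniform lower bound. A uniform perturbation bound (Lemma \ref{lem:1m}, or the min-max inequality $|\lambda_{\delta,\ell}-\lambda_{0,\ell}|\leq\delta|W|_\infty$), combined with $|\lambda-E_\star|\leq\tvar\delta$, gives $|\lambda-\lambda_{\delta,\ell}(\xi)|\geq c_0/4$ for small $\delta$.

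For $\ell\in\{j_\star,j_\star+1\}$, the Dirac point expansion of Definition \ref{def:1} provides a neighborhood $|\xi-\pi|\leq\rho_0$ on which $|\lambda_{0,\ell}(\xi)-E_\star|\geq(\nu_F/2)|\xi-\pi|$, while the argument of the previous case supplies a uniform positive lower bound outside that neighborhood. Hence $|\lambda_{0,\ell}(\xi)-E_\star|\geq c\min(|\xi-\pi|,1)$. The perturbation again costs an $O(\delta)$ term, which for $|\xi-\pi|\geq\delta^{1/3}$ is dominated by $c|\xi-\pi|\geq c\delta^{1/3}$; the distance $|\lambda-E_\star|\leq\tvar\delta$ is likewise negligible compared to $\delta^{1/3}$. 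We conclude $|\lambda-\lambda_{\delta,\ell}(\xi)|\geq c\,\delta^{1/3}/4$.

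The main obstacle is these two bands $\lambda_{\delta,j_\star}$ and $\lambda_{\delta,j_\star+1}$ near but not too near the Dirac quasi-momentum $\pi$: the global spectral gap of $P_\delta$ alone yields only an $O(\delta)$ lower bound on the eigenvalue separation, producing a resolvent bound of order $\delta^{-1}$, far too weak. The decisive input is the transverse (linear) opening of the dispersion curves at the Dirac point, which converts the quasi-momentum separation $|\xi-\pi|\geq\delta^{1/3}$ into an eigenvalue separation of the same order. This beats both the $O(\delta)$ perturbation error and the distance $\tvar\delta$ from $\lambda$ to $E_\star$, producing the sought $O(\delta^{-1/3})$ resolvent bound.
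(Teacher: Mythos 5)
Your proof is correct and follows essentially the same strategy as the paper: reduce to the lower bound $\dist(\lambda,\Sigma_{L^2_\xi}(P_\delta))\gtrsim\delta^{1/3}$ via the spectral theorem, obtain it from the transverse Dirac crossing plus monotonicity of dispersion curves and an $O(\delta)$ eigenvalue perturbation bound, and then upgrade to $H^2_\xi$ with the elliptic estimate. The one small variation is in handling $\ell\notin\{j_\star,j_\star+1\}$: the paper first controls the two Dirac bands $\lambda_{\delta,j_\star},\lambda_{\delta,j_\star+1}$ and then uses the ordering of dispersion curves to push the bound to all other bands, whereas you prove a uniform distance $|\lambda_{0,\ell}(\xi)-E_\star|\geq c_0>0$ directly for each $\ell\neq j_\star,j_\star+1$ and use the $\ell$-uniform min-max perturbation bound; both are valid.
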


\begin{proof} 1. Thanks to the spectral theorem,
\begin{equation}\label{eq:2b}
\left \|  (P_\delta(\xi) - \lambda)^{-1} \right \|_{L^2_\xi} = \dfrac{1}{\dist(\Sigma_{L^2_\xi}(P_\delta), \lambda)}.
\end{equation}
Therefore a $L^2_\xi \rightarrow L^2_\xi$ bound on $(P_\delta(\xi) - \lambda)^{-1}$ follows from a lower bound on the distance between $\lambda$  and the eigenvalues of $P_\delta(\xi)$. 

Let $j_\star\ge0$  the index for which $E_\star = \lambda_{0,j_\star}(\pi)= \lambda_{0,j_\star+1}(\pi)$. We recall that 
\begin{equations}
\lambda_{0,j_\star}(\xi) = E_\star - |\nu_\star| \cdot |\xi-\pi| + O(\xi-\pi)^2,
\\
\lambda_{0,j_\star+1}(\xi) = E_\star + |\nu_\star| \cdot |\xi-\pi| + O(\xi-\pi)^2.
\end{equations}
Because of the monotonicity properties of the dispersion curves -- see e.g. \cite[Theorem XII.89]{RS} -- the function $\xi \in [0,\pi] \mapsto \lambda_{0,j_\star}(\xi)$ is increasing and the function $\xi \in [\pi,2\pi] \mapsto \lambda_{0,j_\star}(\xi)$ is decreasing;
 see Figure \ref{fig:1}.
   Assuming that $|\xi-\pi| \geq \delta^{1/3}$,
\begin{equation}\label{eq:1y}
\lambda_{0,j_\star}(\xi) \leq E_\star - |\nu_\star| \cdot \delta^{1/3} + O\left(\delta^{2/3}\right).
\end{equation}
Similarly, using the monotonicity properties of $\xi  \mapsto \lambda_{0,j_\star+1}(\xi)$, we have 
\begin{equation}\label{eq:2a}
\lambda_{0,j_\star+1}(\xi) \geq E_\star + |\nu_\star| \cdot  \delta^{1/3} + O\left(\delta^{2/3}\right).
\end{equation}

Applying the stability estimate Lemma \ref{lem:1m} with $\VV = V$ and $\WW = \delta W$ we have that if $\delta$ is sufficiently small, then
\begin{equation}
|\lambda_{\delta,j}(\xi)-\lambda_{0,j}(\xi)| \leq C(j)  |W|_\infty \cdot \delta, \ \ \ \ j\in \{j_\star, j_\star+1\}.
\end{equation}
Fix $c \in (0,|\nu_\star|)$.
We deduce from \eqref{eq:1y} and \eqref{eq:2a} that for $\delta_0$ sufficiently small,
\begin{equations}
\delta \in (0,\delta_0), \ \ |\xi-\pi| \geq \delta^{1/3}  \ \ \Rightarrow \ \ \lambda_{\delta,j_\star}(\xi) \leq E_\star - c   \delta^{1/3} \text{ and }
\lambda_{\delta,j_\star+1}(\xi) \geq E_\star + c\delta^{1/3}.
\end{equations}
Because the dispersion curves are indexed in increasing order, this extends to all other dispersion curves: for $\delta \in (0,\delta_0)$ and $|\xi - \pi| \geq \delta^{1/3}$,
\begin{equations}
j \leq j_\star \ \Rightarrow \ \lambda_{\delta,j}(\xi) \leq E_\star - c\delta^{1/3}; \ \ \ \ 
j \geq j_\star+1 \ \Rightarrow \ \lambda_{\delta,j}(\xi) \geq E_\star + c \delta^{1/3}.
\end{equations}
This yields that for $|\xi-\pi|\ge\delta^{1/2}$, $\dist(\Sigma_{L^2_\xi}(P_\delta), E_\star) \geq c\delta^{1/3}$. If $\lambda \in \Dd(E_\star,\tvar\delta)$, we deduce $\dist(\Sigma_{L^2_\xi}(P_\delta), \lambda) \geq c\delta^{1/3} - \tvar \delta$. Shrinking $\delta_0$ if necessary, we get thanks to \eqref{eq:2b}:
\begin{equation}\label{eq:2c}
\delta \in (0,\delta_0), \ \ |\lambda-E_\star|<\var_\sharp\delta, \ \ |\xi-\pi|\ge\delta^{1/3} \ \ \Rightarrow  \ \ \left \|  (P_\delta(\xi) - \lambda)^{-1} \right \|_{L^2_\xi} \leq C \delta^{-1/3}.
\end{equation}

2. It remains to improve \eqref{eq:2c} by replacing the $L^2_\xi$-operator norm by the $H^2_\xi$-operator norm. We use an elliptic estimate, based on the inequality
\begin{equation}\label{eq:11h}
|f|_{H^2_\xi} \leq |f|_{L^2_\xi} + \left|D_x^2 f\right|_{L^2_\xi} \leq \big(1+|V|_\infty+\delta|W |_\infty + |\lambda|\big) \cdot|f|_{L^2_\xi} + |(P_\delta-\lambda) f|_{L^2_\xi}.
\end{equation}
Since $\lambda$ and $\delta$ vary within compact sets, we have for $|\xi-\pi|\ge\delta^{1/3}$ and $|\lambda-E|<\var_\sharp\delta$:
\begin{equation}
\left\|(P_\delta-\lambda)^{-1}\right\|_{L^2_\xi \rightarrow H^2_\xi} \leq C\left\|(P_\delta-\lambda)^{-1}\right\|_{L^2_\xi} + 1 \leq 2C \delta^{-1/3}.
\end{equation}
This completes the proof.\end{proof}

\subsection{Estimates on  $(P_\delta(\xi) - \lambda)^{-1} $ for quasi-momenta near $\pi$}\label{sec:6.3}

We now analyze $(P_\delta(\xi) - \lambda)^{-1} $ when $|\xi-\pi| \leq \delta^{1/3}$ and $\lambda\in \Dd(E_\star,\tvar\delta)$, with $0<\tvar<|\var_\star|$. Recalling that $(\pi,E_\star)$ is a Dirac point of $P_0$, we associate eigenpairs $(\lambda_+(\xi),\phi_+(\xi))$ and $(\lambda_-(\xi),\phi_-(\xi))$ guaranteed by Proposition \ref{prop:1w}.

Define $\Pi_0(\xi) : L^2_\xi \rightarrow \C^2$ by
\begin{equation}
\Pi_0(\xi) v \de \matrice{ \blr{\phi_+(\xi),v} \\  \blr{\phi_-(\xi),v}}.
\end{equation}
We introduce the matrix 
\begin{equation}\label{eq:1n}
M_\delta(\xi) \de \matrice{E_\star + \nu_\star(\xi-\pi) & \delta \ove{\var_\star} \ \\ \delta \var_\star & E_\star-\nu_\star(\xi-\pi)} = E_\star + \nu_\star(\xi-\pi) \bst + \delta \bss,
\end{equation}
where $\nu_\star$ and $\bss$ are displayed in \eqref{eq:1l}.
The eigenvalues of the matrix $M_\delta(\xi)$ are
\begin{equation}
E_\star \pm \sqrt{|\var_\star|^2\delta^2 + \nu_\star^2(\xi-\pi)^2}.
\end{equation}
Therefore, $M_\delta(\xi) - \lambda$ is invertible for every $\lambda \in \Dd(E_\star,\tvar\delta)$. Indeed, since $|\lambda-E_\star|<\tvar\delta$, its eigenvalues are   $E_\star\pm \sqrt{ |\var_\star|^2\delta^2 + \nu_\star^2(\xi-\pi)^2 } - \lambda$
and they satisfy
\begin{equations}
\left|\ E_\star \pm \sqrt{ |\var_\star|^2\delta^2 + \nu_\star^2(\xi-\pi)^2 } - \lambda \right| \geq\sqrt{ |\var_\star|^2\delta^2 + \nu_\star^2(\xi-\pi)^2 } - |E_\star-\lambda| 
\\
 > \left(| \var_\star|-\tvar\right)\delta.
\end{equations}
The next proposition shows that for $\delta$ sufficiently small and $(\xi,\lambda)$ near $(\pi,E_\star)$, the operator $P_{\delta}(\xi)-\lambda$ behaves like a $2\times 2$ matrix $M_\delta(\xi)$. 

\begin{prop}\label{prop:1b} Let $\tvar \in (0,|\var_\star|)$. There exists $\delta_0 > 0$ such that
\begin{equations}
\delta \in (0,\delta_0), \ \ |\xi-\pi| \leq 2\delta^{1/3}, \ \ \lambda \in \Dd\left(E_\star,\tvar\delta\right) \ \ \Rightarrow \ \ P_\delta(\xi)-\lambda \text{ is invertible; and }
\end{equations}
\begin{equation}\label{eq:1w}
(P_\delta(\xi)-\lambda)^{-1} = \Pi_0(\xi)^* \cdot (M_\delta(\xi)-\lambda)^{-1} \cdot \Pi_0(\xi) + \OO_{L^2_\xi \rightarrow H^2_\xi}(1).
\end{equation}
\end{prop}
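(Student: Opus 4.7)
My plan is to invert $P_\delta(\xi)-\lambda$ by a Feshbach--Schur reduction adapted to the two-dimensional ``Dirac subspace'' $\HH(\xi) \de \spane(\phi_+(\xi),\phi_-(\xi))$ that carries the near-degenerate eigenvectors of $P_0(\xi)$. Let $\Pi(\xi) = \Pi_0(\xi)^*\Pi_0(\xi)$ and $Q(\xi) = \Id - \Pi(\xi)$ be the corresponding orthogonal projections. Identifying $\HH(\xi) \simeq \C^2$ via $\Pi_0(\xi)$, I would write $T \de P_\delta(\xi)-\lambda$ in block form
\begin{equation*}
T = \begin{pmatrix} T_{11} & T_{12} \\ T_{21} & T_{22} \end{pmatrix},\ T_{11} = \Pi_0(\xi) T \Pi_0(\xi)^*,\ T_{12} = \Pi_0(\xi) T Q(\xi),\ T_{21} = Q(\xi) T \Pi_0(\xi)^*,\ T_{22} = Q(\xi) T Q(\xi).
\end{equation*}
The guiding heuristic is that the singular behavior of $T^{-1}$ sits inside the $\HH(\xi)$-block and is captured by $(M_\delta(\xi)-\lambda)^{-1}$.

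Next I would estimate the four blocks. Using $P_0(\xi)\phi_\pm(\xi) = \lambda_\pm(\xi)\phi_\pm(\xi)$, the real-analyticity of $\xi \mapsto \phi_\pm(\xi)$ (Proposition \ref{prop:1w}), and the matrix elements computed in Proposition \ref{prop:1p} at $\xi = \pi$, one obtains
\begin{equation*}
T_{11} = M_\delta(\xi) - \lambda + \EE_{11}(\xi,\delta), \qquad \|\EE_{11}\|_{\C^{2\times 2}} = O(|\xi-\pi|^2) + O(\delta|\xi-\pi|).
\end{equation*}
The orthogonality $\HH(\xi) \perp Q(\xi) L^2_\xi$ kills the $P_0(\xi)$-part of the off-diagonal blocks, leaving $T_{12} = \delta \Pi_0(\xi) W Q(\xi)$ and $T_{21} = \delta Q(\xi) W \Pi_0(\xi)^*$, both of norm $O(\delta)$. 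For $T_{22}$: the spectrum of $Q(\xi) P_0(\xi) Q(\xi)$ consists of the dispersion values $\lambda_{0,j}(\xi)$, $j \notin \{j_\star,j_\star+1\}$, which by continuity stay separated from $E_\star$ by a positive constant $c_0$ uniformly near $\xi = \pi$; Lemma \ref{lem:1m} absorbs the $\delta W$-perturbation and the elliptic estimate \eqref{eq:11h} promotes the bound to $\|T_{22}^{-1}\|_{L^2_\xi \rightarrow H^2_\xi} = O(1)$.

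The Feshbach map $F(\xi,\lambda) \de T_{11} - T_{12} T_{22}^{-1} T_{21} = (M_\delta(\xi) - \lambda) + \EE$ then has $\|\EE\| = O(\max(\delta,|\xi-\pi|)^2)$. The eigenvalues $E_\star \pm \sqrt{|\var_\star|^2\delta^2 + \nu_\star^2(\xi-\pi)^2}$ of $M_\delta(\xi)$, together with $\tvar < |\var_\star|$, force $\|(M_\delta(\xi)-\lambda)^{-1}\| \leq C/\max(\delta,|\xi-\pi|)$; hence $\|\EE(M_\delta(\xi)-\lambda)^{-1}\| = O(\max(\delta,|\xi-\pi|)) = O(\delta^{1/3})$ on the regime $|\xi-\pi| \leq 2\delta^{1/3}$. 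A Neumann series then gives $F$ invertible and $F^{-1} = (M_\delta(\xi)-\lambda)^{-1} + \OO_{\C^{2\times 2}}(1)$. Plugging this into the Schur reconstruction
\begin{equation*}
T^{-1} = \begin{pmatrix} F^{-1} & -F^{-1} T_{12} T_{22}^{-1} \\ -T_{22}^{-1} T_{21} F^{-1} & T_{22}^{-1} + T_{22}^{-1} T_{21} F^{-1} T_{12} T_{22}^{-1} \end{pmatrix},
\end{equation*}
the off-diagonal and $(2,2)$-blocks are all $O(1)$ in operator norm because the $O(\delta)$ factors of $T_{12},T_{21}$ precisely cancel the $O(\max(\delta,|\xi-\pi|)^{-1})$ growth of $F^{-1}$. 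This produces \eqref{eq:1w} in the $L^2_\xi \rightarrow L^2_\xi$ topology.

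The main obstacle is the exact balance: the Feshbach error $\|\EE\| = O(\max(\delta,|\xi-\pi|)^2)$ and the singular norm $\|(M_\delta(\xi)-\lambda)^{-1}\| = O(\max(\delta,|\xi-\pi|)^{-1})$ conspire via $\|F^{-1} - (M_\delta(\xi)-\lambda)^{-1}\| \leq C\|(M_\delta(\xi)-\lambda)^{-1}\|^2\|\EE\|$ to saturate at $O(1)$---matching but not improving the claimed remainder, and explaining why \eqref{eq:1w} is stated with an $O(1)$ error rather than $o(1)$. The upgrade of the residual $(P_\delta(\xi)-\lambda)^{-1} f - \Pi_0(\xi)^*(M_\delta(\xi)-\lambda)^{-1}\Pi_0(\xi) f$ from $L^2_\xi$ to $H^2_\xi$ uses \eqref{eq:11h}: the key input is that $(P_\delta(\xi)-\lambda)\Pi_0(\xi)^*(M_\delta(\xi)-\lambda)^{-1}\Pi_0(\xi) f = \Pi(\xi) f + O_{L^2_\xi}(|f|_{L^2_\xi})$, which follows from $T_{11}(M_\delta(\xi)-\lambda)^{-1} = \Id_{\C^2} + O(\max(\delta,|\xi-\pi|))$ together with the $O(\delta)$ coupling to $Q(\xi) L^2_\xi$ via $T_{21}$.
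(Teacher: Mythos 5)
Your proposal is correct and is essentially the same argument as the paper's: a Schur/Feshbach reduction of $P_\delta(\xi)-\lambda$ relative to the orthogonal splitting of $L^2_\xi$ into the span of $\phi_\pm(\xi)$ and its complement, the same block estimates, and the same use of the elliptic inequality \eqref{eq:11h} for the $H^2_\xi$ upgrade. The only inessential difference is the order of elimination: you invert the large block $T_{22}$ first so your Schur complement $F=T_{11}-T_{12}T_{22}^{-1}T_{21}$ is directly a $2\times 2$ matrix perturbation of $M_\delta(\xi)-\lambda$, whereas the paper inverts the $2\times2$ block $A$ first and forms $E=D-CA^{-1}B$ as a perturbation of $(P_0(\xi)-\lambda)$ restricted to the complement; both routes require the same two invertibility inputs and give the same error.
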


\begin{proof} 1. Let $V(\xi) = \C \phi_+(\xi) \oplus \C \phi_-(\xi) \subset L^2_\xi$.
 We write $P_\delta(\xi)-\lambda$ as a block matrix corresponding to the decomposition $L^2_\xi = V(\xi) \oplus V(\xi)^\perp$:
\begin{equation}\label{eq:1m}
P_\delta(\xi) - \lambda =   \matrice{A_\delta(\xi,\lambda) & B_\delta(\xi) \\ C_\delta(\xi) & D_\delta(\xi,\lambda)}.
\end{equation}
Above, $A_\delta(\xi,\lambda) : V(\xi)\to V(\xi)$,\ $B_\delta(\xi) : V(\xi)^\perp\to V(\xi)$,\  $C_\delta(\xi) : V(\xi)\to V(\xi)^\perp$
 and $D_\delta(\xi,\lambda) : V(\xi)^\perp\to V(\xi)^\perp$. 
 
We will use the  Schur complement Lemma   to  prove that $P_\delta(\xi)-\lambda$ is invertible and to get a useful formula for the inverse.

\begin{lem}\label{schur}
Suppose given an operator in block form
\begin{equation}
\mathscr{M} \de \matrice{A&B\\ C&D } 
\end{equation}
such that the operators $A$ and $E=D - CA^{-1}B$ are invertible (in their appropriate spaces).
Then $\mathscr{M}$ is invertible and its inverse is given by
\begin{equation}
\mathscr{M}^{-1}=
 \matrice{  E^{-1} & -E^{-1}BA^{-1} \\
 - A^{-1}CE^{-1} & A^{-1} + A^{-1}CE^{-1}BA^{-1}}.
 \end{equation}
\end{lem}
\noindent From \eqref{eq:1m}, we conclude that $P_\delta(\xi)-\lambda$ is invertible if under the conditions of \eqref{eq:1w}:
\begin{equations}\label{s-conds}
A_\delta(\xi,\lambda) : \ V(\xi) \rightarrow V(\xi) \ \text{ is invertible; } \\
E_\delta(\xi,\lambda) \de D_\delta(\xi,\lambda) - C_\delta(\xi)A_\delta(\xi,\lambda)^{-1}B_\delta(\xi) : V(\xi)^\perp \rightarrow V(\xi)^\perp \ \text{ is invertible}.
\end{equations}
We next verify these conditions in Steps 2 and 3.

2. Let $J_0(\xi) : V(\xi) \rightarrow \C^2$ be the coordinate map in the basis $\{\phi_+(\xi),\phi_-(\xi)\}$. In other words, $J_0(\xi)$ is the restriction of $\Pi_0(\xi)$ to $L^2_\xi$. Thus, $J_0(\xi)^*$ maps a pair $(\alpha,\beta)\in\C^2$ to $\alpha\phi_+(\xi)+\beta\phi_-(\xi)\in V(\xi)$ and we have
\begin{equation}
A_\delta(\xi,\lambda)= J_0(\xi)^* (P_\delta(\xi)-\lambda)J_0(\xi)= J_0(\xi)^* \matrice{\lambda_+(\xi)-\lambda & \delta \blr{\phi_-(\xi),W \phi_+(\xi)} \\  \delta \blr{\phi_+(\xi),W \phi_-(\xi)} & \lambda_-(\xi)-\lambda } J_0(\xi).
\end{equation}
We show that $A_\delta(\xi,\lambda)$ is invertible. We recall that $\lambda_{\pm}(\xi) = E_\star \pm \nu_\star(\xi-\pi) + O(\xi-\pi)^2$ and we observe that by Propositions \ref{prop:1w},
\begin{equation}
\delta \blr{\phi_+(\xi),W \phi_-(\xi)} = \delta \blr{\phi_+^\star,W \phi_-^\star} + O\big(\delta(\xi-\pi)\big) = \delta \var_\star + O\left(\delta^2+(\xi-\pi)^2\right).
\end{equation}
Similarly, $\delta \blr{\phi_+(\xi),W \phi_-(\xi)} = \delta \ove{\var_\star} + O\left(\delta^2+(\xi-\pi)^2\right)$.
Hence
\begin{equations}
A_\delta(\xi,\lambda) = J_0(\xi)^* (M_\delta(\xi)-\lambda)  J_0(\xi) + \OO_{V(\xi)}\left((\xi-\pi)^2 + \delta^2\right)
\end{equations}
where $M_\delta(\xi)$ is displayed in \eqref{eq:1n}. 

In order to prove that $A_\delta(\xi,\lambda)$ is invertible when the conditions on $\xi$ and $\lambda$ in  \eqref{eq:1w} are satisfied, we first observe that
\begin{equation}\label{eq:11j}
\left\|(M_\delta(\xi)-\lambda)^{-1}\right\|_{\C^2} = O\left( (\delta+|\xi-\pi|)^{-1} \right).
\end{equation}
The matrix $M_\delta(\xi)$ is hermitian. Hence, a bound on $(M_\delta(\xi)-\lambda)^{-1}$ follows from an estimate on the distance between $\lambda$ and the spectrum of $M_\delta(\xi)$. The eigenvalues of $M_\delta(\xi)-\lambda$ are  $E_\star \pm \sqrt{\nu_\star^2(\xi-\pi)^2 + \delta^2 |\var_\star|^2} - \lambda$. Using that for all $t\in(0,1)$, $\sqrt{a^2+b^2}\ge t|a|+(1-t)|b|$,  we have :
\begin{equations}
\left| E_\star \pm \sqrt{\nu_\star^2(\xi-\pi)^2 + \delta^2 |\var_\star|^2} - \lambda \right| 
\geq \sqrt{\nu_\star^2(\xi-\pi)^2 +  |\var_\star|^2\delta^2} - |\lambda-E_\star|\\
\geq (1-t) \nu_\star |\xi-\pi|+ t|\var_\star|\delta\ -\ \tvar\delta 
\geq (1-t) \nu_\star |\xi-\pi|+ |\var_\star|\left(t-\frac{\tvar}{|\var_\star|}\right) \delta.
\end{equations}
Choosing $t$ to satisfy $\frac{\tvar}{|\var_\star|}<t<1$, we have that under the conditions of \eqref{eq:1w}, $M_\delta(\xi)-\lambda$ is invertible with the bound \eqref{eq:11j}.

Because of \eqref{eq:11j} and a perturbative argument (based on a Neumann series), under the conditions of \eqref{eq:1w} $A_\delta(\xi,\lambda)$ is invertible and 
\begin{equations}
A_\delta(\xi,\lambda)^{-1} = J_0(\xi)^* (M_\delta(\xi)-\lambda)^{-1}  J_0(\xi) + \OO_{V(\xi)}\left(\dfrac{(\xi-\pi)^2 + \delta^2}{(\delta + |\xi-\pi|)^2}\right) 
\\
 = J_0(\xi)^* (M_\delta(\xi)-\lambda)^{-1}  J_0(\xi) + \OO_{V(\xi)}(1).
\end{equations}
In particular $A_\delta(\xi,\lambda)^{-1}  = \OO_{V(\xi)}\left((\delta+|\xi-\pi|)^{-1}\right)$.

3.  The operator $B_\delta(\xi) : V(\xi)^\perp \rightarrow V(\xi)$ is the composition of (i) the embedding $V(\xi)^\perp \rightarrow L^2_\xi$;  (ii) the operator $P_\delta = P_0 + \delta W$; and  (iii) the projection $L^2_\xi \rightarrow V(\xi)$. Note that $B_0(\xi) = 0$. This proves that $B_\delta(\xi)$ is more simply the composition of (i); the multiplication operator by $\delta W$; and (iii). In particular $B_\delta(\xi) = \OO_{V(\xi)^\perp \rightarrow V(\xi)}(\delta)$. Since $C_\delta(\xi) = B_\delta(\xi)^*$, we deduce have $C_\delta(\xi) = \OO_{V(\xi) \rightarrow V(\xi)^\perp}(\delta)$. 

 The bounds obtained in Step 2 imply that
\begin{equations}\label{eq:1k}
E(\xi,\lambda)=D_\delta(\xi,\lambda) - C_\delta(\xi)A_\delta(\xi,\lambda)^{-1}B_\delta(\xi)  = D_\delta(\xi,\lambda) + \OO_{{V(\xi)} }(\delta)
\\
= (P_0(\xi)-\lambda)\big|_{V(\xi)^\perp} + \OO_{V(\xi)^\perp}(\delta),
\end{equations}
where  $(P_0(\xi)-\lambda)\big|_{V(\xi)^\perp}$ acts $V(\xi)^\perp \rightarrow V(\xi)^\perp$. In particular, $(P_0(\xi)-\lambda)\big|_{V(\xi)^\perp}$ has spectrum equal to $\Sigma_{L^2_\xi}(P_0) \setminus \{ \lambda_{0,j_\star}(\xi), \lambda_{0,j_\star+1}(\xi)\}$. 

Because of the monotonicity properties of dispersion curves \cite[Theorem XIII.89]{RS}, for every $\xi \in [0,2\pi]$,
\begin{equation}
\lambda_{0,j_\star+2}(\xi) \geq \lambda_{0,j_\star+2}(0) \geq \lambda_{0,j_\star+1}(0) > \lambda_{0,j_\star+1}(\pi) = E_\star.
\end{equation}
Hence, $\lambda_{0,j_\star+2}(\xi)>E_\star$ and similarly we can show that  $\lambda_{0,j_\star-1}(\xi) < E_\star$. It follows that for a constant $c > 0$ and for any $j \notin \{j_\star,j_\star+1\}$, $|\lambda_{0,j}(\xi) -E_\star| \geq c$. Because $\lambda \in \Dd(E_\star,\tvar \delta)$, we deduce that
\begin{equation}
j \notin \{j_\star,j_\star+1\} \ \Rightarrow \ |\lambda_{0,j}(\xi) -\lambda| \geq c - O(\delta).
\end{equation}

Therefore $\left((P_0(\xi)-\lambda)\big|_{V(\xi)^\perp}\right)^{-1}$ exists and has norm $\OO_{V(\xi)^\perp}(1)$. From \eqref{eq:1k}, a  Neumann series argument shows that $E_\delta(\xi,\lambda)$ is invertible with 
\begin{equation}
E_\delta(\xi,\lambda)^{-1}=\left((P_0(\xi)-\lambda)\big|_{V(\xi)^\perp}\right)^{-1} + \OO_{V(\xi)^\perp}(\delta)=\OO_{{V(\xi)^\perp\to V(\xi)^\perp}}(1),
\end{equation}
as long as the conditions on $\xi$ and $\lambda$ of \eqref{eq:1w} are satisfied.

4.  By Lemma \ref{schur} and the previous bounds, when the conditions of \eqref{eq:1w} are fulfilled, the inverse of $P_\delta(\xi)-\lambda$ exists and satisfies
\begin{equations}\label{eq:11x}
(P_\delta(\xi)-\lambda)^{-1} =   \matrice{A_\delta(\xi,\lambda)^{-1} & 0 \\ 0 & 0}   + \OO_{L^2_\xi}(1)\\
 = \Pi_0(\xi)^* \cdot (M_\delta(\xi)-\lambda)^{-1} \cdot \Pi_0(\xi)+ \OO_{L^2_\xi}(1).
 \end{equations}

5. Thanks to the elliptic estimate \eqref{eq:11h}, the expansion \eqref{eq:11x} can be improved to one with an error term which is $\OO_{L^2_\xi \rightarrow H^2_\xi}(1)$. The proof of Proposition \ref{prop:1b} is complete.
\end{proof}

\subsection{Expansion of $(P_\delta-\lambda)^{-1}$, near $\lambda=E_\star$}\label{sec:6.4} By the Floquet--Bloch reconstruction  formula \eqref{eq:2j} we have:
\[(P_\delta-\lambda)^{-1} = \frac{1}{2\pi}\ \int_{[0,2\pi]}^\oplus d\xi\ ( P_\delta(\xi)-\lambda)^{-1}.
\]
Proposition \ref{prop:1b} implies that the dominant contributions to $(P_\delta(\xi)-\lambda)^{-1}$ are encoded in the RHS of \eqref{eq:1w}. Thus we separately consider contributions coming from  quasi-momenta near and away
 from $\xi=\pi$. Consider a function  $\chi \in C_0^\infty(\R)$ such that 
\begin{equation}
\supp(\chi) \subset (-2,2), \ \ \ \ \chi(x) = 1 \text{ for } x \in [-1,1]. 
\end{equation}
Then, 
\begin{align}
(P_\delta-\lambda)^{-1} & = \frac{1}{2\pi}\int_{[0,2\pi]}^\oplus d\xi\ \chi\left( \dfrac{\xi-\pi}{\delta^{1/3}} \right)\cdot \left( P_\delta(\xi)-\lambda \right)^{-1}\\ 
 & + \frac{1}{2\pi} \int_{[0,2\pi]}^\oplus d\xi\ \left(1-\chi\left( \dfrac{\xi-\pi}{\delta^{1/3}} \right)\right) \cdot \left( P_\delta(\xi)-\lambda\right)^{-1}.
\end{align}
By Proposition \ref{prop:1l}, the second integral satisfies the bound:
\begin{equation}
\left\|\ \frac{1}{2\pi}  \int_{[0,2\pi]}^\oplus d\xi\ \left(1-\chi\left( \dfrac{\xi-\pi}{\delta^{1/3}} \right)\right) \cdot (P_\delta(\xi)-\lambda )^{-1}\ \right\|_{{L^2_\xi\to H^2_\xi}} \leq C \delta^{-1/3}.
\end{equation}
By Proposition \ref{prop:1b}, the first integral may be expanded as:
\[\frac{1}{2\pi} \int_{[0,2\pi]}^\oplus d\xi\ \chi\left( \dfrac{\xi-\pi}{\delta^{1/3}} \right)\cdot \left( P_\delta(\xi)-\lambda \right)^{-1}
= R_{\delta}(\lambda) 
+ \OO_{{L^2_\xi\to H^2_\xi}}(1), \]
where $R_{\delta}(\lambda)$ is the operator acting on $L^2$  given by
\begin{equation}
R_{\delta}(\lambda) \de \dfrac{1}{2\pi}\int_{[0,2\pi]}^\oplus d\xi\ \chi\left( \dfrac{\xi-\pi}{\delta^{1/3}} \right) \cdot \Pi_0(\xi)^* \big(M_\delta(\xi)-\lambda \big)^{-1} \Pi_0(\xi).
\end{equation}
Therefore,
\begin{equation}\label{eq:2m}
(P_\delta-\lambda)^{-1}= R_{\delta}(\lambda) +  \OO_{{L^2_\xi\to H^2_\xi}}(\delta^{-1/3}).
\end{equation}
Theorem \ref{thm:1} follows from the following result:

\begin{prop} \label{prop:1}  Let $0 < \tvar < |\var_\star|$. There exists $\delta_0 > 0$ such that if
$
\delta \in (0,\delta_0)$ and $z \in \Dd(0,\tvar)$, 
then the operator $R_\delta(\lambda)$ satisfies
\begin{equation}\label{R_d}
R_\delta(E_\star+z \delta) = S_\delta(z)+  
 \OO_{L^2} (\delta^{-1/3})
\end{equation}
\begin{equation}\label{dR_d}
D_x R_\delta(E_\star+z \delta) = S_\delta^D(z)+   \OO_{L^2} (\delta^{-1/3}),
\end{equation}
where the operators $S_\delta(z)$ and $S_\delta^D(z)$ are displayed in \eqref{Sd} and \eqref{DSd}.
\end{prop}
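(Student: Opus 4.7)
The plan is to compare the integral kernels of $R_\delta(E_\star+z\delta)$ and $S_\delta(z)$ via an intermediate operator. Using the Floquet--Bloch reconstruction formula together with the explicit expression
\[\Pi_0(\xi)^* (M_\delta(\xi)-\lambda)^{-1} \Pi_0(\xi)\, g(x) \;=\; \sum_{j,k\in\{+,-\}}\phi_j(\xi,x)\, [(M_\delta(\xi)-\lambda)^{-1}]_{jk}\, \blr{\phi_k(\xi),g}_{L^2_\xi}\]
and the identity $\blr{\phi,\tilde f(\xi,\cdot)}_{L^2_\xi}=\int_\R \ove{\phi(y)} f(y)\, dy$, I would first write the kernel of $R_\delta(E_\star+z\delta)$ as
\[K_R(x,y) \;=\; \dfrac{1}{2\pi}\int_0^{2\pi} d\xi\, \chi\!\left(\dfrac{\xi-\pi}{\delta^{1/3}}\right)\sum_{j,k} \phi_j(\xi,x)\bigl[M_\delta(\xi)-E_\star-z\delta\bigr]^{-1}_{jk}\ove{\phi_k(\xi,y)}.\]
Using $\UU_\delta D_x\UU_\delta^{-1}=\delta^{-1}D_x$ and Fourier inversion for $(\Di_+-z)^{-1}$, together with the identity $(M_\delta(\xi)-E_\star-z\delta)^{-1}=\delta^{-1}(\nu_\star\eta\bst+\bss-z)^{-1}$ at $\eta=(\xi-\pi)/\delta$, I obtain for $S_\delta(z)$ the kernel
\[K_S(x,y) \;=\; \dfrac{1}{2\pi}\int_\R d\xi\, e^{i(\xi-\pi)(x-y)}\sum_{j,k}\phi_j^\star(x)\bigl[M_\delta(\xi)-E_\star-z\delta\bigr]^{-1}_{jk}\ove{\phi_k^\star(y)}.\]

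The bridge between $K_R$ and $K_S$ is the $L^\infty$-expansion $\phi_j(\xi,x) = e^{i(\xi-\pi)x}\phi_j^\star(x) + O(|\xi-\pi|)$ valid near $\xi=\pi$, which follows from writing $\phi_j(\xi,x)=e^{i\xi x}p_j(\xi,x)$ with $p_j(\xi,\cdot)\in L^2_0$ real-analytic in $\xi$ (Proposition \ref{prop:1w}). I would introduce the intermediate operator $T_\delta(\lambda)=\frac{1}{2\pi}\int^\oplus_{[0,2\pi]} d\xi\, \chi \cdot \tilde\Pi_0(\xi)^* (M_\delta(\xi)-\lambda)^{-1}\tilde\Pi_0(\xi)$, where $\tilde\Pi_0(\xi)$ is defined as $\Pi_0(\xi)$ but using $e^{i(\xi-\pi)x}\phi_j^\star(x)$ in place of $\phi_j(\xi,x)$. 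The proof then splits as $R_\delta(\lambda)-S_\delta(z) = (R_\delta-T_\delta)+(T_\delta-S_\delta)$, and I would control each piece with a different structural estimate.

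For $R_\delta-T_\delta$, since this difference is still a direct integral $\frac{1}{2\pi}\int^\oplus d\xi$ of a fiber operator on $L^2_\xi$, its operator norm equals the essential supremum of the fiber norms. Each fiber norm is bounded by $C\|\Pi_0(\xi)-\tilde\Pi_0(\xi)\|\cdot\|(M_\delta(\xi)-\lambda)^{-1}\|\le C|\xi-\pi|\cdot C'\bigl((\xi-\pi)^2+\delta^2\bigr)^{-1/2}= O(1)$ uniformly in $\xi$, using the $L^\infty$-expansion and the resolvent bound \eqref{eq:11j} established in the proof of Proposition \ref{prop:1b}. For $T_\delta-S_\delta$, since $\chi$ is supported in $(\pi-2\delta^{1/3},\pi+2\delta^{1/3})\subset[0,2\pi]$ for $\delta$ small, the kernel difference factors as $\sum_{j,k}\phi_j^\star(x)F_{jk}(x-y)\ove{\phi_k^\star(y)}$, where $F_{jk}$ has inverse Fourier transform $\bigl(\chi((\xi-\pi)/\delta^{1/3})-1\bigr)[M_\delta(\xi)-\lambda]^{-1}_{jk}$. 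This realizes $T_\delta-S_\delta$ as a composition of two bounded multiplications and a convolution, whose $L^2$-operator norm equals the $L^\infty$-norm of the convolution symbol; on $|\xi-\pi|\ge\delta^{1/3}$ this symbol is bounded by $C/|\xi-\pi|\le C\delta^{-1/3}$, giving $\|T_\delta-S_\delta\|_{L^2}=O(\delta^{-1/3})$.

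The second statement \eqref{dR_d} follows by the same route, using $D_x\phi_j(\xi,x) = e^{i(\xi-\pi)x}\bigl[D_x\phi_j^\star(x)+(\xi-\pi)\phi_j^\star(x)\bigr] + O(|\xi-\pi|)$. The $D_x\phi_j^\star$ piece assembles $S_\delta^D(z)$ after the integration is extended from $[0,2\pi]$ to $\R$; the spurious $(\xi-\pi)\phi_j^\star$ piece produces a Fourier multiplier with symbol $(\xi-\pi)[M_\delta(\xi)-\lambda]^{-1}$, which is uniformly bounded because $[M_\delta(\xi)-\lambda]^{-1}$ decays like $|\xi-\pi|^{-1}$ at infinity and yields only an $O(1)$ contribution; the remaining $O(|\xi-\pi|)$ correction is absorbed into an $O(1)$ fiber estimate exactly as for $R_\delta-T_\delta$. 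The main technical obstacle is calibrating the two complementary strategies -- the direct-integral/fiber estimate controlling $R_\delta-T_\delta$ where $|\xi-\pi|\lesssim\delta^{1/3}$, versus the Fourier-multiplier estimate controlling $T_\delta-S_\delta$ where $|\xi-\pi|\gtrsim\delta^{1/3}$ -- so that the product of the $O(|\xi-\pi|)$ error from the eigenfunction expansion and the singular $O(\delta^{-1})$ size of the resolvent never spoils the $O(\delta^{-1/3})$ remainder.
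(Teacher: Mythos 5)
Your proposal is correct, and it takes a genuinely different route from the paper's proof for the main error estimate. Both arguments split off the same intermediate operator: your $T_\delta(\lambda)$, with $\phi_j(\xi,\cdot)$ replaced by $e^{i(\xi-\pi)\cdot}\phi_j^\star$, coincides (after the change of variables $\xi\mapsto\pi+\delta\xi$) with the paper's operator $R^0_\delta(z)$, and both treat the cutoff-removal piece $T_\delta-S_\delta$ identically, as a Fourier multiplier sandwiched between bounded multiplications, with $L^\infty$-symbol control giving $O(\delta^{-1/3})$. The divergence is in how the remaining error $R_\delta-T_\delta$ (the paper's $\mathcal{E}_\delta(z)$) is bounded. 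The paper invokes Lemma~\ref{lem:1a}, a dedicated kernel estimate proved by Schur's test plus integration by parts, yielding $\OO_{L^2}(\delta^{-1/3})$. You instead observe that $R_\delta-T_\delta$ is still a direct integral $\frac{1}{2\pi}\int^\oplus d\xi$ of rank-two fiber operators on $L^2_\xi$, so its $L^2(\R)$-operator norm equals the essential supremum of the fiber norms; combining $\|\Pi_0(\xi)-\tilde\Pi_0(\xi)\|=O(|\xi-\pi|)$ (from real-analyticity of $\xi\mapsto p_j(\xi)\in L^2_0$) with the resolvent bound $\|(M_\delta(\xi)-\lambda)^{-1}\|=O\bigl((\delta+|\xi-\pi|)^{-1}\bigr)$ gives a uniform $O(1)$ fiber bound. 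This is simpler (no oscillatory-integral estimate, no Schur test), structurally cleaner, and in fact sharper than the $O(\delta^{-1/3})$ the paper proves for that term. What the paper's Lemma~\ref{lem:1a} buys in exchange is robustness: it is a kernel-level tool that does not require the error to retain the direct-integral structure, so it ports to settings where the decomposition has already been smeared across fibers. Two small polish points: your claim that the $L^2$-norm of $T_\delta-S_\delta$ \emph{equals} the $L^\infty$-norm of the symbol should read ``is bounded by a constant times'' (the multiplications by $\phi_j^\star$, $\ove{\phi_k^\star}$ contribute their $L^\infty$ norms); and in the $D_x$ case you should state explicitly that the same direct-integral fiber bound controls $D_xR_\delta-\tilde T^D_\delta$, using $D_x\phi_j(\xi,\cdot)\in L^2_\xi$ with $O(|\xi-\pi|)$ deviation from the leading approximant, before the Fourier-multiplier step — your sketch implies this but does not say it.
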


\begin{proof}[Proof of Proposition \ref{prop:1}] 
\noindent {\bf Step 1.}  We begin with the proof of \eqref{R_d}. The proof of \eqref{dR_d} is very similar; we explain it in Step 4.
 The operator $R_\delta(\lambda)$ has kernel given by
\begin{equation}\label{eq:3f}
R_\delta(x,y;\lambda) \de \dfrac{1}{2\pi}\int_\R  \matrice{ \phi_+(\xi,x) \\ \phi_-(\xi,x)  }^\top \cdot \chi\left( \dfrac{\xi-\pi}{\delta^{1/3}} \right) \big( M_\delta(\xi)-\lambda \big)^{-1} \cdot \ove{\matrice{ \phi_+(\xi,y) \\  \phi_-(\xi,y)}} d\xi.
\end{equation}
Making the change of variables  $\xi \mapsto\pi+\delta\xi $ and hence $d\xi \mapsto \delta d\xi$ in  
\eqref{eq:3f} shows that the kernel $R_\delta(x,y;\lambda)$  equals
\begin{equation}\label{eq:3o}
 \dfrac{\delta}{2\pi}\int_\R \chi\left( \delta^{2/3} \xi \right) \matrice{ \phi_+(\pi+\delta\xi,x) \\ \phi_-(\pi+\delta \xi,x)  }^\top  \cdot \big( M_\delta(\pi+\delta\xi)-\lambda \big)^{-1}  \cdot \ove{\matrice{ \phi_+(\pi+\delta\xi,y) \\  \phi_-(\pi+\delta\xi,y)}} d\xi.
\end{equation}

Introduce the rescaled energy parameter $z = \frac{\lambda-E_\star}{\delta} \in \Dd(0,\tvar)$. By the definition of $M_\delta(\xi)$ -- see \eqref{eq:1n} -- we have:
\begin{equations}
\big(M_\delta(\pi+\delta\xi)-\lambda\big)^{-1}  
 = \dfrac{1}{\delta}\matrice{\nu_\star \xi - z & \ove{\var_\star} \ \\  \var_\star & -\nu_\star \xi - z}^{-1}
   = 
\dfrac{1}{\delta}  \big( \Di_+(\xi)-z \big)^{-1}, \\
\Di_+(\xi) \de\nu_\star \xi\bst  + \bss \in M_2(\C).
\end{equations}

Substituting in \eqref{eq:3o} shows that $R_\delta(x,y;E_\star+z \delta)$ equals
\begin{equation}\label{eq:3f-b}
  \dfrac{1}{2\pi}\int_\R   \matrice{ \phi_+(\pi+\delta\xi,x) \\ \phi_-(\pi+\delta \xi,x)  }^\top \cdot \chi\left( \delta^{2/3} \xi \right)\big( \Di_+(\xi)-z \big)^{-1} \cdot \ove{\matrice{ \phi_+(\pi+\delta\xi,y) \\  \phi_-(\pi+\delta\xi,y)}} d\xi.
\end{equation}
To deal with the integrand in \eqref{eq:3f-b}, we define $p_\pm(\xi)$, $p_\pm^\star$ and $q_\pm(\xi)$ via:
\begin{equation}
\phi_\pm(\xi,x) \de e^{i\xi x} p_\pm(\xi,x) ;  \ \ p_\pm^\star(x) \de p_\pm(\pi,x); \ \  p_\pm(\pi+\xi,x) - p_\pm^\star(x) \de e^{-i\pi x} q_\pm(\xi,x).
\end{equation}
A direct manipulation implies $\phi_\pm(\pi+\delta\xi,x) = e^{i\delta \xi x} \cdot \big(\phi_\pm^\star(x) + q_\pm(\delta \xi,x) \big)$. Hence, the integrand in \eqref{eq:3f-b} becomes
\begin{equation}\label{eq:8h}
e^{i\delta\xi(x-y)} \matrice{ \phi_+^\star (x) + q_+(\delta \xi,x) \\ \phi_-^\star(x) + q_-(\delta \xi,x) }^\top \cdot \chi\left( \delta^{2/3} \xi \right)\big( \Di_+(\xi)-z \big)^{-1} \cdot \ove{\matrice{ \phi_+^\star(y) +q_+(\delta \xi,y) \\  \phi_-^\star(y) + q_-(\delta \xi,y)}}.
\end{equation}
Observe that $\chi(\delta \xi) \chi(\delta^{2/3} \xi) = \chi(\delta^{2/3} \xi)$ -- as long as $\delta$ is sufficiently small. Therefore, inserting cutoffs $\chi(\delta \xi)$ leaves \eqref{eq:8h} unchanged. It now writes as
\begin{equations}\label{eq:8c}
e^{i\delta\xi(x-y)} \sum_{j,k =0}^1  f_j(\delta\xi,x)^\top \cdot \chi\left( \delta^{2/3} \xi \right)\big( \Di_+(\xi)-z \big)^{-1} \cdot \overline{f_k(\delta \xi,y)}, \\ 
f_0(\xi,x) \de \chi(\xi)\matrice{ \phi_+^\star(x)  \\ \phi_-^\star(x)}; \ \ \ \ f_1(\xi,y) \de \chi(\xi)\matrice{ q_+(\xi,y) \\ q_-(\xi,y)}.
\end{equations}

The leading order term in \eqref{eq:8c} corresponds to $(j,k) = (0,0)$. 
Coming back to \eqref{eq:3f-b}, we deduce that
\begin{equation}
R_\delta(x,y;E_\star+ \delta z) = R^0_\delta(x,y; z) + \mathcal{E}_\delta(x,y; z)
\end{equation}
where
\begin{equations}
R^0_\delta(x,y; z) \de \matrice{ \phi_+^\star(x) \\ \phi_-^\star(x)  }^\top \cdot \dfrac{1}{2\pi}\int_\R e^{i\delta \xi(x-y)} \cdot \chi\left( \delta^{2/3} \xi\right) \big(\Di_+(\xi)-z\big)^{-1}  \cdot  d\xi \cdot \ove{\matrice{ \phi_+^\star(y) \\ \phi_-^\star(y)}};
\\
\mathcal{E}_\delta(x,y; z)  \de  \dfrac{1} {2\pi}
 \int_\R e^{i\delta\xi(x-y)}   \sum_{(j,k) \neq (0,0)}  f_j(\delta\xi,x)^\top \cdot \chi\left( \delta^{2/3}\xi \right) \big( \Di_+(\xi)-z \big)^{-1}  \cdot \overline{f_k(\delta \xi,y)} \cdot d\xi.
\end{equations}

 In Step 2 we will prove that $\mathcal{E}_\delta(x,y; z)$ defines an operator $\mathcal{E}_\delta(z)$ with $\mathcal{E}_\delta(z)=\OO_{L^2}(\delta^{-1/3})$ and in Step 3 we 
will show that since the function $\chi(\delta^{2/3}\ \cdot)$ has slow variations, we can to replace it by $1$. These considerations will imply Proposition \ref{prop:1}.

\indent{\bf Step 2.}\ Motivated by the form error term $\mathcal{E}_\delta(z)$ we introduce a class of kernels. Let $\Phi \in C^\infty(\R \times \R \times \R,\C)$ independent of $\delta$ be such that:
\begin{itemize}
\item[(i)] $\Phi$ is bounded together with all its derivatives;
\item[(ii)] $\Phi(0,x,y) = 0$ for every $x, y \in \R$.
\end{itemize}
Let $s_\delta : \R \rightarrow \C$ be a symbol such that 
 \begin{equation}\label{eq:3a}
\supp(s_\delta) \subset \big[-2\delta^{-2/3},2\delta^{-2/3}\big]; \ \ \ \  \textrm{for all}\  \ell, \ \sup_{\xi \in \R, \ 0 < \delta < 1} \big|\blr{\xi}^{\ell+1}D_\xi^\ell s_\delta(\xi)\big| < \infty.
 \end{equation}
 
An operator bound on $\mathcal{E}_\delta(z)$ is a consequence of: 

\begin{lem}\label{lem:1a} Let  $A$ denote the operator with kernel 
\begin{equation}
A(x,y) = \int_\R  e^{i\delta \xi (x-y)} \cdot s_\delta(\xi) \cdot \Phi(\delta \xi,x,y) d\xi,
\end{equation}
where $s_\delta$ and $\Phi$ are as above. Then $A = \OO_{L^2}(\delta^{-1/3})$.
\end{lem}

\begin{proof} By Schur's test,
it suffices to show that 
\[\sup_{y\in\R}\int_\R |A(x,y)| dx+\sup_{x\in\R}\int_\R |A(x,y)| dy = O\left(\delta^{-1/3}\right).\]
We bound the first term; the second term is treated similarly. Note  that 
\begin{equation}
\int_\R |A(x,y)| dx = \int_\R |A(x+y,y)| dx = \int_\R \left|\int_\R  e^{i\delta \xi x} \cdot s_\delta(\xi) \cdot \Phi(\delta \xi,x+y,y) \cdot d\xi\right|dx.
\end{equation}

We first look at the integral for $|x| \leq L$, where $L>0$ is to be determined later. Note that since $\Phi(0,\cdot) \equiv 0$ and $\Phi(\xi, \cdot)$ has bounded derivatives, there exists a constant $C$ such that
\begin{equation}\label{eq:3d}
 (\xi,x,y) \in \R^3 \ \Rightarrow \ |\Phi(\xi,x,y)| \leq C|\xi|.
\end{equation}  Using that $s_\delta \in S^{-1}$, the support condition \eqref{eq:3a}, and the estimate \eqref{eq:3d}, 
\begin{equation}
\int_{|x| \leq L} |A(x,y)| dx \leq \int_{|x| \leq L} \int_{|\xi|\le 2\delta^{-2/3}} C \blr{\xi}^{-1} \delta \xi \cdot d\xi= O(L\delta^{1/3}).
\end{equation} 
To we deal with the integral over $|x| \geq L$ we take advantage of the oscillatory integrand. Integration  by parts gives:
\begin{equations}
\int_{|x| \geq L} \left|\int_\R e^{i\delta \xi x} s_\delta(\xi)  \Phi(\delta \xi,x+y,y) d\xi\right|dx  
= \int_{|x| \geq L} \left|\int_\R  \dfrac{(D_\xi^2 e^{i\delta \xi x})}{\delta^2 x^2}s_\delta(\xi) \Phi(\delta \xi,x+y,y) d\xi\right|dx \\
= \int_{|x| \geq L} \dfrac{1}{\delta^2 x^2} \left|\int_\R  e^{i\delta \xi x} D_\xi^2\big(s_\delta(\xi)\Phi(\delta \xi,x+y,y)\big) d\xi\right|dx \\
\leq \int_{|x| \geq L} \dfrac{1}{\delta^2 x^2} \int_\R  \left| D_\xi^2\big(s_\delta(\xi) \cdot \Phi(\delta \xi,x+y,y)\big) \right| d\xi dx \\
\leq \int_{|x| \geq L} \dfrac{C}{\delta^2 x^2} \int_{[-2\delta^{-2/3},2\delta^{-2/3}]}  \left(\left| \blr{\xi}^{-3} \delta \xi \right| + \left| \blr{\xi}^{-2} \delta \right| + \left| \blr{\xi}^{-1} \delta^2 \right| \right) d\xi dx.
\end{equations}
In the last inequality, we used \eqref{eq:3d}, the support condition \eqref{eq:3a}, the estimates $D_\xi(\Phi(\delta \xi, \cdot)) = O(\delta)$ and $D_\xi^2(\Phi(\delta \xi, \cdot)) = O(\delta^2)$. We deduce that
\begin{equation}
\int_{|x| \geq L} \left|\int_\R  e^{i\delta \xi x} \cdot s_\delta(\xi) \cdot \Phi(\delta \xi,x+y,y) d\xi\right|dx \leq  \dfrac{C}{\delta^2 L}  \cdot \big(\delta - \delta^2 \ln(\delta)\big) = O\left((\delta L)^{-1} \right).
\end{equation}

Optimize the two bounds by picking $L = \delta^{-2/3}$ and get the bound $O(\delta^{-1/3})$.
 This  concludes the proof.
\end{proof}

The operator $\EEE_\delta(z)$ -- with kernel $\mathcal{E}_\delta(x,y;z)$ -- is a finite sum of operators handled by Lemma \ref{lem:1a}. The functions $s_\delta$ are among coefficients of the matrix $\chi(\delta^{2/3 }\xi) (\Di_+(\xi)-z)^{-1}$: they satisfy \eqref{eq:3a}; the $\Phi(\xi,x,y)$ are among the coefficients of the matrix $\ove{f_k(\xi,y)} \cdot f_j(\xi,x)^\top$ for some $(j,k) \neq (0,0)$ -- in particular (ii) holds. We deduce that
$\EE_\delta(z) = \OO_{L^2}(\delta^{-1/3})$.

\indent{\bf Step 3.}\ So far we have shown that $R_\delta(E_\star+\delta z) = R^0_\delta(z)+ \OO_{L^2}(\delta^{-1/3})$ where the kernel of $R^0_\delta(z)$ is given in Step 1. 
We may write
\begin{equation}\label{eq:8v}
R^0_\delta(z)= \matrice{ \phi_+^\star\\ \phi_-^\star  }^\top \cdot \DD_\delta(z) \cdot \ove{\matrice{ \phi_+^\star \\ \phi_-^\star  }} + \OO_{L^2}(\delta^{-1/3})
\end{equation}
where the Fourier multiplier $\DD_\delta(z)$ has kernel
\begin{equation}
\DD_\delta(x,y;z)= \dfrac{1}{2\pi}\int_\R e^{i\delta \xi(x-y)} \cdot  \chi\left( \delta^{2/3} \xi \right) \big( \Di_+(\xi) - z\big)^{-1}  \cdot d\xi.
\end{equation}
In terms of  $\UU_\delta$ -- given by $\UU_\delta f(x) \de \delta^{1/2}f(\delta x)$ -- we obtain
\begin{equations}
\left(\UU_\delta^{-1} \DD_\delta(\cdot,\cdot;z) \UU_\delta\right)(x,y) 
= \dfrac{1}{2\pi \delta}\int_\R e^{i\xi(x-y)}  \chi\left( \delta^{2/3} \xi \right) \cdot   \big( \Di_+(\xi) - z\big)^{-1} \cdot d\xi.
\end{equations}
In particular, $\UU_\delta^{-1} \DD_\delta(z) \UU_\delta$ is a Fourier multiplier with symbol $\delta^{-1} \cdot \chi\left( \delta^{2/3} \xi \right)  \big( \Di_+(\xi) - z\big)^{-1}$. Split this symbol as
\begin{equation}
 \delta^{-1} \cdot \big( \Di_+(\xi) - z\big)^{-1} + \delta^{-1} \cdot \big( \chi\left( \delta^{2/3} \xi \right) -1 \big)  \big( \Di_+(\xi) - z\big)^{-1}.
\end{equation}
The second term has $L^\infty$-norm controlled by $\delta^{-1/3}$, therefore it is associated with a Fourier multiplier that is $\OO_{L^2}(\delta^{-1/3})$. Therefore,
\[ \DD_\delta(z)= \UU_\delta \cdot \big( \Di - z\big)^{-1} \cdot \UU_\delta^{-1}+ \OO_{L^2}(\delta^{-1/3}).\]
Thanks to \eqref{eq:8v}, we deduce \eqref{R_d}.

\textbf{Step 4.} We indicate the steps to prove \eqref{dR_d}. We first write the kernel of $D_xR_\delta$ as in \eqref{eq:3f}:
\begin{equation}
(D_xR_\delta)(x,y;z) = \dfrac{1}{2\pi}\int_\R  \matrice{ (D_x\phi_+)(\xi,x) \\ (D_x\phi_-)(\xi,x)  }^\top \cdot  \chi\left( \dfrac{\xi-\pi}{\delta^{1/3}} \right)(M_\delta(\xi)-\lambda)^{-1} \cdot \ove{\matrice{ \phi_+(\xi,y) \\   \phi_-(\xi,y)}} \cdot d\xi.
\end{equation}
As in Step 1, we deduce (after setting $\xi = \pi+\delta \xi$) that $D_x R_\delta$ has kernel
\begin{equation}\label{eq:8s}
\dfrac{1}{2\pi}  \int_\R  e^{i\delta\xi(x-y)}   \sum_{j,k =0}^1  \big[\delta \xi \cdot f_j(\delta\xi,x) \big]^\top \cdot \chi(\delta^{2/3} \xi)   \big( \Di_+(\xi)-z \big)^{-1} \cdot \overline{f_k(\delta \xi,y)}  \cdot d\xi  
\end{equation}
\begin{equation}\label{eq:8t}
+ \dfrac{1}{2\pi}  \int_\R  e^{i\delta\xi(x-y)}   \sum_{j,k =0}^1  (D_x f_j)(\delta\xi,x)^\top \cdot \chi(\delta^{2/3} \xi)  \big( \Di_+(\xi)-z \big)^{-1} \cdot \overline{f_k(\delta \xi,y)}  \cdot d\xi
\end{equation}

We introduce  $g_j = \xi  \cdot f_j$ so that \eqref{eq:8s} reduces to
\begin{equations}
\dfrac{1}{2\pi}  \int_\R  e^{i\delta\xi(x-y)}  \sum_{j,k =0}^1  g_j(\delta\xi,x)^\top \cdot \chi(\delta^{2/3} \xi) \big( \Di_+(\xi)-z \big)^{-1} \cdot \overline{f_k(\delta \xi,y)}  \cdot d\xi.
\end{equations}
As in Step 2, this kernel is a finite sum of operators handled by Lemma \ref{lem:1a}. The functions $s_\delta$ are among coefficients of the matrix $\chi(\delta^{2/3 }\xi) (\Di_+(\xi)-z)^{-1}$: they satisfy \eqref{eq:3a}; the $\Phi(\xi,x,y)$ are among the coefficients of the matrix $\ove{f_k(\xi,y)} \cdot g_j(\xi,x)^\top$. Note that (i) holds; since $g_j(0,x) = 0$, (ii) holds. We deduce that \eqref{eq:8t} is the kernel of an operator $\OO_{L^2}(\delta^{-1/3})$.

We now look at \eqref{eq:8t}. In order to control the terms associated with $(j,k) \neq (0,0)$, we can apply the same analysis as in the end of Step 2. We deduce that the kernel 
\begin{equation}
\dfrac{1}{2\pi}  \int_\R  e^{i\delta\xi(x-y)} \sum_{(j,k) \neq (0,0)}  (D_x f_j)(\delta\xi,x)^\top \cdot \chi(\delta^{2/3} \xi)   \big( \Di_+(\xi)-z \big)^{-1} \cdot \overline{f_k(\delta \xi,y)}  \cdot d\xi
\end{equation}
represents an operator that is $\OO_{L^2}(\delta^{-1/3})$. Hence, the leading term in \eqref{eq:8t} corresponds to $j=k=0$: it has kernel
\begin{equation}
\matrice{ D_x\phi_+^\star(x) \\ D_x\phi_-^\star(x)  }^\top \cdot \dfrac{1}{2\pi}\int_\R e^{i\delta \xi(x-y)} \cdot \chi\left( \delta^{2/3} \xi\right) \big(\Di_+(\xi)-z\big)^{-1}  \cdot d\xi \cdot \ove{\matrice{ \phi_+^\star(y) \\ \phi_-^\star(y)}}.
\end{equation}
The same procedure as in Step 3 allows to replace $\chi$ by $1$; and identify the resulting kernel with the operator on the RHS of \eqref{dR_d}.
\end{proof}

Proposition \ref{prop:1} together with the estimate \eqref{eq:2m} imply Theorem \ref{thm:1} for the operator $P_\delta$.  This theorem has a corollary for $P_{-\delta}$. We will need the operator $\Di_-$, which is the asymptotic limit of $\Di$ as $x \rightarrow - \infty$:
\begin{equation}
\Di_- \de \nu_\star \bst D_x - \bss.
\end{equation}

\begin{cor}\label{cor:1} There exists $\delta_0 > 0$ such that for $\delta \in (0,\delta_0)$ and $\lambda \in \Dd(E_\star,\tvar \delta)$,
\begin{equations}
(P_{-\delta}-\lambda)^{-1} =  S_{-\delta}\left(\frac{\lambda-E_\star}{\delta}\right) + \OO_{L^2} (\delta^{-1/3}), \ \ \ \
D_x(P_{-\delta}-\lambda)^{-1} =  S_{-\delta}^D\left(\frac{\lambda-E_\star}{\delta}\right) + \OO_{L^2} (\delta^{-1/3})
\end{equations}
where
\begin{align}
S_{-\delta}(z) &= \dfrac{1}{\delta} \matrice{ \phi_+^\star \\ \phi_-^\star  }^\top \cdot    \UU_\delta\  \big( \Di_- - z\big)^{-1}\   \UU_\delta^{-1}  \cdot \ove{\matrice{ \phi_+^\star \\ \phi_-^\star  }}, \\
S_{-\delta}^D(z) &= \dfrac{1}{\delta} \matrice{ D_x\phi_+^\star \\ D_x\phi_-^\star  }^\top \cdot    \UU_\delta\  \big( \Di_- - z\big)^{-1}\   \UU_\delta^{-1}  \cdot \ove{\matrice{ \phi_+^\star \\ \phi_-^\star  }}.
\end{align}
\end{cor}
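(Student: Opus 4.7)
The proof is a direct application of Theorem \ref{thm:1} to the operator obtained by replacing $W$ with $-W$. The key observation is that $P_{-\delta} = D_x^2 + V + \delta \cdot (-W)$, so $P_{-\delta}$ is literally the operator produced by Theorem \ref{thm:1}'s construction applied to the potential $\widetilde{W} \de -W$ in place of $W$. One must only track how the various quantities appearing in the statement of Theorem \ref{thm:1} transform under this substitution.

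The substitution $W \mapsto -W$ preserves all data that depends only on $P_0$: in particular the Dirac point $(\pi, E_\star)$, the Dirac eigenbasis $(\phi_+^\star, \phi_-^\star)$, and the Fermi velocity $\nu_\star = 2\blr{\phi_+^\star, D_x \phi_+^\star}$ are unchanged. The hypothesis $\SSS \widetilde{W} = -\widetilde{W}$ and $\widetilde{W} \in L^2_{0,\od}$ still hold. However, the bilinear pairing is negated:
\begin{equation}
\widetilde{\var_\star} \de \blr{\phi_+^\star, \widetilde{W} \phi_-^\star} = -\var_\star, \qquad \widetilde{\bss} = \matrice{0 & \overline{\widetilde{\var_\star}} \\ \widetilde{\var_\star} & 0} = -\bss.
\end{equation}
Hypothesis (H3) is invariant under $W \mapsto -W$ since $|\widetilde{\var_\star}| = |\var_\star| \neq 0$, so the size of the spectral gap, $\Dd(E_\star, \tvar \delta)$ with $\tvar \in (0, |\var_\star|)$, is also unchanged.

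Applying Theorem \ref{thm:1} with this data, the asymptotic Dirac operator that appears in the expansion is
\begin{equation}
\widetilde{\Di_+} = \nu_\star \bst D_x + \widetilde{\bss} = \nu_\star \bst D_x - \bss = \Di_-.
\end{equation}
Substituting $\Di_-$ for $\Di_+$ in formulas \eqref{Sd}--\eqref{DSd} produces exactly the operators $S_{-\delta}(z)$ and $S_{-\delta}^D(z)$ in the statement of the corollary, and the error bounds $\OO_{L^2}(\delta^{-1/3})$ are inherited without change. There is no independent obstacle here: the entire proof of Theorem \ref{thm:1} -- the fiberwise analysis of \S\ref{sec:6.2}--\ref{sec:6.3}, the Floquet reconstruction of \S\ref{sec:6.4}, the oscillatory integral estimates of Lemma \ref{lem:1a} -- uses $W$ only through the pairings that enter $\var_\star$ and $\bss$, and goes through verbatim for $\widetilde{W} = -W$.
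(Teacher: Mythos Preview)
Your proposal is correct and follows exactly the same approach as the paper: apply Theorem \ref{thm:1} with $W$ replaced by $-W$, noting that only $\var_\star$ (hence $\bss$) changes sign, so that $\Di_+$ becomes $\Di_-$. The paper's own justification is a single sentence to this effect.
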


The proof of Corollary \ref{cor:1} follows from changing $W$ to $-W$ and applying Theorem \ref{thm:1}. The only parameter to change is $\var_\star$, which becomes $-\var_\star$. This produces the operator $\Di_-$ instead of $\Di_+$.

\section{Construction of a parametrix.}\label{sec:5}

We are interested in the eigenvalues of the operator 
\begin{equation}
\PP_\delta = D_x^2 + V + \delta\cdot \kappa(\delta \ \cdot)\cdot W
\end{equation}
that lie in the spectral gap about the Dirac energy $E_\star$. The operator $\PP_\delta$ has both point and essential spectrum. Its essential spectrum is determined by the asymptotic operators $P_{\pm\delta}$, expanded in the previous section. 
It is natural to compose $\PP_\delta$ with a bounded operator  $Q_\delta(\lambda)$ (a {\it parametrix}, constructed using $P_{\pm\delta}$) 
which effectively ``divides out" the essential spectrum. The resulting operator is of the form $\Id+K_\delta(\lambda)$, where $K_\delta(\lambda)$ is compact and depends analytically on $\lambda$. The eigenvalues of $\PP_\delta$ are among the poles of $(\Id+K_\delta(\lambda))^{-1}$.

Our strategy in \S\ref{sec:5} and \S\ref{sec:6} is:
\begin{itemize}
\item Construct explicitly $Q_\delta(\lambda)$ and $K_\delta(\lambda)$ from the asymptotic operators $P_{\pm \delta}$;
\item Apply Theorem \ref{thm:1} to derive an expansion of  $K_\delta(\lambda)$ for small $\delta$;
\item Use the expansion of $K_\delta(\lambda)$  to locate and count the poles of $(\Id+K_\delta(\lambda))^{-1}$.
\end{itemize}

\subsection{Analytic Fredholm theory}\label{fred-thy} We briefly review analytic Fredholm theory. We refer to \cite[Appendix C]{DZ}. Let $U \subset \C$ be open and $\HH$ be a Hilbert space. Let $\BB(\HH)$ be the space of bounded operators on $\HH$. A family $\zeta \in U \mapsto T(\zeta) \in \BB(\HH)$ is holomorphic (resp. meromorphic) on $U$ if for every $u, v \in \HH$, the function 
\begin{equation}
\zeta \in U \ \mapsto \blr{u, T(\zeta) v}_\HH \in \C
\end{equation}
is holomorphic (resp. meromorphic). 

We recall that a bounded operator $T \in \BB(\HH)$ is Fredholm if both its kernel and cokernel are finite dimensional. We will use the following result:

\begin{lem}\cite[Theorem C.5]{DZ}\label{lem:1k} Let $\zeta \in U \mapsto T(\zeta) \in \BB(\HH)$ be a holomorphic family of Fredholm operators. If there exists $\zeta_0 \in U$ such that $T(\zeta_0)$ is invertible, then $\zeta \in U \mapsto T(\zeta)^{-1}$ (initially defined in a small neighborhood of $\zeta_0$) extends to a meromorphic family of operators with poles of finite rank.
\end{lem}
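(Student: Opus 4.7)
The plan is to prove this via the \emph{Grushin / Schur complement} method, which reduces the inversion of the infinite-dimensional operator $T(\zeta)$ to the inversion of a finite-dimensional matrix depending holomorphically on $\zeta$. The key conceptual point is that a holomorphic family of Fredholm operators can locally be enlarged into a holomorphic family of \emph{invertible} operators on $\HH \oplus \C^N$; the inverse of $T(\zeta)$ itself is then recovered through a finite matrix inversion, which is manifestly meromorphic by the one-dimensional identity theorem for holomorphic functions.

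In more detail, first reduce to the case where $U$ is connected; since the Fredholm index is locally constant and vanishes at $\zeta_0$, it vanishes throughout $U$. Fix $\zeta_1 \in U$ and set $N \de \dim \ker T(\zeta_1) = \dim \operatorname{coker} T(\zeta_1)$. Choose bases and define $R_+ : \C^N \rightarrow \HH$, $R_- : \HH \rightarrow \C^N$ such that $R_+$ is an isomorphism onto $\ker T(\zeta_1)$ and $R_-^*$ is an isomorphism onto a complement of the range of $T(\zeta_1)$. A direct verification shows that the block operator
\begin{equation}
\mathscr{T}(\zeta) \de \matrice{T(\zeta) & R_+ \\ R_- & 0} : \HH \oplus \C^N \rightarrow \HH \oplus \C^N
\end{equation}
is invertible at $\zeta = \zeta_1$. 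Since $\mathscr{T}$ depends holomorphically on $\zeta$ and invertibility is open in the operator norm topology, $\mathscr{T}(\zeta)$ remains invertible on a neighborhood $V$ of $\zeta_1$, and one may write
\begin{equation}
\mathscr{T}(\zeta)^{-1} = \matrice{E(\zeta) & E_+(\zeta) \\ E_-(\zeta) & E_{-+}(\zeta)},
\end{equation}
where each block depends holomorphically on $\zeta \in V$. The Schur complement identity (Lemma \ref{schur} applied to $\mathscr{T}(\zeta)^{-1}$) then gives that $T(\zeta)$ is invertible in $V$ if and only if the $N \times N$ matrix $E_{-+}(\zeta)$ is invertible, with $T(\zeta)^{-1} = E(\zeta) - E_+(\zeta) E_{-+}(\zeta)^{-1} E_-(\zeta)$. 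Since $\det E_{-+}(\zeta)$ is a scalar holomorphic function on $V$, either it vanishes identically on $V$ or its zeros are isolated; in the latter case $T(\zeta)^{-1}$ is meromorphic on $V$ with poles of rank at most $N$.

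To globalize, let $\Omega \subset U$ be the open set on which $T(\zeta)$ is invertible; by hypothesis $\zeta_0 \in \Omega$, so $\Omega \neq \emptyset$. At any $\zeta_1 \in \overline{\Omega} \cap U$, the local construction above applies. If $\det E_{-+}$ vanished identically near $\zeta_1$, then the neighborhood $V$ would be disjoint from $\Omega$, contradicting $\zeta_1 \in \overline{\Omega}$. Hence the zeros of $\det E_{-+}$ are isolated in $V$, which shows that $U \setminus \Omega$ is a discrete subset of $U$ and that $T(\zeta)^{-1}$ extends meromorphically to all of $U$ with finite-rank poles. The main obstacle I anticipate lies in the \emph{construction} of the Grushin problem and the verification that $\mathscr{T}(\zeta_1)$ is invertible: this is the point where Fredholmness (both finite-dimensional kernel and finite-dimensional cokernel) is used essentially, and where the index-zero reduction must be justified. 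Once the local Grushin problem is set up, the remainder is a formal block-matrix manipulation combined with the identity theorem in one complex variable.
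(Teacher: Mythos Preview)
The paper does not give its own proof of this lemma; it is simply quoted from \cite[Theorem C.5]{DZ}. Your Grushin/Schur complement argument is precisely the approach used in that reference, and the local part of your sketch is correct.

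One small point in the globalization step: you argue only for $\zeta_1 \in \overline{\Omega} \cap U$ and then assert that $U \setminus \Omega$ is discrete. To make this rigorous you should run an open-and-closed argument on the connected set $U$: let $\Omega'$ be the set of points admitting a neighborhood on which $T(\zeta)^{-1}$ is meromorphic with finite-rank poles. Your local Grushin construction shows $\Omega'$ is open; the same construction, applied at a boundary point $\zeta_1 \in \overline{\Omega'} \cap U$, shows $\det E_{-+}$ cannot vanish identically near $\zeta_1$ (since points of $\Omega'$, hence of $\Omega$, accumulate there), so $\zeta_1 \in \Omega'$ and $\Omega'$ is closed in $U$. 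Connectedness and $\zeta_0 \in \Omega'$ then give $\Omega' = U$. As written, your argument does not exclude the possibility of an open region of $U$ disjoint from $\overline{\Omega}$ on which $T(\zeta)$ is nowhere invertible; the connectedness step is what rules this out.
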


\subsection{Parametrix}\label{sec:5.1}
We define resonances for the operator $\PP_\delta$. We look for a parametrix of $\PP_\delta-\lambda$, i.e. an operator $Q_\delta(\lambda) : L^2 \rightarrow H^2$ such that for $\lambda \in \Dd(E_\star,\tvar \delta)$,
\begin{equation}\label{eq:3i}
(\PP_\delta-\lambda)Q_\delta(\lambda) - \Id \ \de \ K_\delta(\lambda)  \text{ is a compact operator on } L^2.
\end{equation}
This requires $Q_\delta(\lambda)$ to be equal to the asymptotic resolvents $(P_{\pm\delta}-\lambda)^{-1}$ near $x=\pm \infty$. Since $\kappa(x) \to \pm 1$ as $x\to\pm\infty$, a natural parametrix is
\begin{equation}
Q_\delta(\lambda) = \dfrac{1+\kappa_\delta}{2} (P_\delta-\lambda)^{-1} + \dfrac{1-\kappa_\delta}{2} (P_{-\delta}-\lambda)^{-1}, \ \ \ 
\kappa_\delta(x) = \kappa(\delta x).
\end{equation}

For $\tvar\in(0,|\var_\star|)$, 
 and for sufficiently small positive $\delta$, $Q_\delta(\lambda)$ is well-defined for $\lambda \in \C^+ \cup \Dd(E_\star,\tvar\delta)$ because $P_\delta-\lambda$ is invertible  for $\lambda$ in this set -- see Theorem \ref{thm:1}.
  We next verify that  with this choice of $Q_\delta(\lambda)$, assertion \eqref{eq:3i} holds for $\lambda \in \C^+ \cup \Dd(E_\star,\tvar\delta)$:
\begin{equations}
(\PP_\delta-\lambda) Q_\delta(\lambda) = \dfrac{1}{2} \sum_{\pm} (\PP_\delta-\lambda) (1 \pm \kappa_\delta) (P_{\pm\delta}-\lambda)^{-1} \\
 = \dfrac{1}{2} \sum_{\pm} (P_{\pm \delta}-\lambda) (1 \pm \kappa_\delta) (P_{\pm\delta}-\lambda)^{-1} + \dfrac{1}{2} \sum_{\pm} (\PP_\delta-P_{\pm\delta}) (1 \pm \kappa_\delta) (P_{\pm\delta}-\lambda)^{-1} \\
 = \Id + \dfrac{1}{2} \sum_{\pm} \big[D_x^2, \pm \kappa_\delta\big] (P_{\pm\delta}-\lambda)^{-1} + \dfrac{\delta}{2} \sum_{\pm} ( \kappa_\delta  \mp  1) W (1 \pm \kappa_\delta) (P_{\pm\delta}-\lambda)^{-1} \\
 = \Id + \dfrac{1}{2} \sum_{\pm} \pm \big[D_x^2, \kappa_\delta\big] (P_{\pm\delta}-\lambda)^{-1} + \dfrac{\delta}{2} \sum_{\pm} \pm \left(\kappa_\delta^2-1\right) W  (P_{\pm\delta}-\lambda)^{-1}.
 \end{equations}
 
 Thus we have $(\PP_\delta-\lambda) Q_\delta(\lambda) =  \Id + K_\delta(\lambda)$ where
 \begin{equation}\label{Kd-def}
K_\delta(\lambda) \de \dfrac{1}{2} \left( \big[D_x^2,\kappa_\delta\big] + \delta \left(\kappa_\delta^2-1\right) W  \right) \cdot \left((P_\delta-\lambda)^{-1} - (P_{-\delta}-\lambda)^{-1}\right).
\end{equation}

\begin{prop}\label{prop:1m} There exists $\delta_0 > 0$ such that for every $\delta \in (0,\delta_0)$ and all
  $\lambda \in  \C^+ \cup \Dd(E_\star,\tvar\delta)$, the operator $K_\delta(\lambda)$ is compact and $\Id + K_\delta(\lambda)$ forms a holomorphic family of Fredholm operators. In addition, $\Id+K_\delta(i)$ is invertible on $L^2$. 
\end{prop}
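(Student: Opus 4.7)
The plan rests on the structural observation that the first-order operator $A_\delta := \frac{1}{2}\left([D_x^2,\kappa_\delta] + \delta(\kappa_\delta^2-1)W\right)$ has coefficients compactly supported in $\{|x|\le L/\delta\}$. Indeed, a direct computation gives $[D_x^2,\kappa_\delta] = -\delta^2 \kappa''(\delta\,\cdot) - 2i\delta\kappa'(\delta\,\cdot)D_x$, and $\kappa'$, $\kappa''$, and $\kappa^2-1$ all vanish outside $[-L,L]$. Factoring $K_\delta(\lambda) = A_\delta\cdot \Delta R_\delta(\lambda)$ with $\Delta R_\delta(\lambda) := (P_\delta-\lambda)^{-1} - (P_{-\delta}-\lambda)^{-1}$, compactness of $K_\delta(\lambda):L^2\to L^2$ will follow from Rellich--Kondrachov once one checks (i) that $\Delta R_\delta(\lambda)$ is bounded $L^2 \to H^2$ and (ii) that $A_\delta$ sends $H^2$ into $H^1$-functions supported in $[-L/\delta, L/\delta]$. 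Item (i) holds on $\C^+$ by self-adjointness of $P_{\pm\delta}$, and on $\Dd(E_\star,\tvar\delta)$ via Theorem \ref{thm:1} together with Corollary \ref{cor:1}; item (ii) is immediate from the explicit form of $A_\delta$ combined with the fact that $D_x u\in H^1$ whenever $u\in H^2$.

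The holomorphy of $\lambda\mapsto K_\delta(\lambda)$ on $\C^+\cup\Dd(E_\star,\tvar\delta)$ is inherited from that of the resolvents $\lambda\mapsto(P_{\pm\delta}-\lambda)^{-1}$ on this set. Since identity-plus-compact is automatically Fredholm of index zero, $\lambda\mapsto\Id + K_\delta(\lambda)$ is the desired holomorphic family of Fredholm operators.

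For invertibility at $\lambda = i$, I would invoke the second resolvent identity $\Delta R_\delta(i) = -2\delta\,(P_\delta-i)^{-1}\,W\,(P_{-\delta}-i)^{-1}$, which gives $\|\Delta R_\delta(i)\|_{L^2\to H^2} = O(\delta)$ from the uniform $L^2\to H^2$ bound on $(P_{\pm\delta}-i)^{-1}$ (standard elliptic estimate using $\|V+\delta W\|_\infty \le C$) and the boundedness of $W$. Meanwhile, a direct inspection of the three terms of $A_\delta$ yields $\|A_\delta\|_{H^2\to L^2} = O(\delta)$: the two multiplication-type terms have $L^\infty$-norms of orders $\delta^2$ and $\delta$ respectively, and $\delta\kappa'(\delta\,\cdot)D_x$ is bounded from $H^1$ into $L^2$ by $O(\delta)$. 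Composing gives $\|K_\delta(i)\|_{L^2\to L^2} = O(\delta^2)$, so for $\delta$ sufficiently small a Neumann series produces $(\Id+K_\delta(i))^{-1}$.

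The main point to track carefully is that $\Delta R_\delta(\lambda)$ truly lands in $H^2$ rather than merely $H^1$, uniformly in $\lambda$ on the prescribed domain; this is what allows the composition with the first-order operator $A_\delta$ to still produce $H^1$-regularity, which is crucial for the Rellich step. Beyond this, the argument uses only Rellich--Kondrachov, analytic Fredholm theory (Lemma \ref{lem:1k}), and uniform elliptic regularity for $P_{\pm\delta}$.
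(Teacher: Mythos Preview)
Your proof is correct and follows essentially the same approach as the paper: compactness via Rellich--Kondrachov using that $A_\delta$ has compactly supported coefficients and the resolvents map $L^2\to H^2$; holomorphy inherited from the resolvents; and invertibility at $\lambda=i$ via a Neumann series after showing $\|K_\delta(i)\|_{L^2}$ is small. Two minor remarks: you are actually more careful than the paper, which claims the range of $K_\delta(\lambda)$ lies in $H^2$ when (because of the first-order term $(D_x\kappa_\delta)D_x$) it really lies only in $H^1$ with compact support---your observation that $A_\delta:H^2\to H^1$ is the accurate one, and $H^1$ is of course sufficient for Rellich; and your use of the second resolvent identity yields the sharper bound $\|K_\delta(i)\|=O(\delta^2)$ versus the paper's $O(\delta)$, though either suffices.
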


\begin{proof}[Proof of Proposition \ref{prop:1m}]\ We show that (a)  $K_\delta(\lambda)$ forms a holomorphic family of compact operators on $L^2$
 and (b) $\|K_\delta(i)\|_{L^2} \leq 1/2$ for $\delta$ sufficiently small.
Assertion (a) implies that $\Id + K_\delta(\lambda)$ forms a holomorphic family of Fredholm operators and assertion (b) implies that $\Id + K_\delta(i)$ is invertible for $\delta$ sufficiently small (by a Neumann series). 

We start with (a). The holomorphic dependence on $\lambda$ is clear. To show that  $K_\delta(\lambda)$ is compact on $L^2$, we observe that $D_x\kappa_\delta, D_x^2 \kappa_\delta$ and $\kappa_\delta^2-1$ are all smooth, compactly supported functions (with support contained in that of $\kappa_\delta^2-1$); and that the operators $(P_{\pm \delta} - \lambda)^{-1}$ map $L^2$ to $H^2$ when $\lambda$ is outside the spectrum of $P_\delta$. Therefore, the range of $K_\delta(\lambda)$ consists of $H^2$-functions with support contained in $\supp(\kappa_\delta^2-1)$. This is a compact subset of $L^2$ by the Rellich--Kondrachov theorem.

We now prove (b).  We first show that $(P_\delta+i)^{-1}$ is bounded from $L^2$ to $H^2$, with uniform bound in $\delta \in (0,1)$. By the spectral theorem, $\|(P_\delta+i)^{-1}\|_{L^2} \leq 1$. We now use the elliptic estimate: if $f \in H^2$ then
\begin{equation}
|f|_{H^2} \leq \left|D_x^2 f\right|_{L^2} + |f|_{L^2} \leq  |(P_\delta+i)f|_{L^2} + C |f|_{L^2}, 
\end{equation}
where $C$ does not depend on $\delta$ when $0 < \delta < 1$. This shows that 
\begin{equation}
\left|(P_\delta+i)^{-1}f\right|_{H^2} \leq  |f|_{L^2} + C |(P_\delta+i)^{-1}f|_{L^2} \leq C |f|_{L^2}.
\end{equation}
Thus $\|(P_\delta+i)^{-1}\|_{L^2 \rightarrow H^2} = O(1)$ uniformly in $\delta \in (0,1)$ as claimed. Now, we note that $D_x\kappa_\delta, D_x^2 \kappa_\delta$ and $\delta(\kappa_\delta^2-1)$ are all $O_{L^\infty}(\delta)$. This together with the uniform bound on $(P_\delta+i)^{-1} : L^2 \rightarrow H^2$ implies that $K_\delta(i) = \OO_{L^2}(\delta)$. Taking $\delta$ sufficiently small  concludes the proof. 
\end{proof}

Proposition \ref{prop:1m} implies assertion \eqref{eq:3i}. It relates the eigenvalues of $\PP_\delta$ with the poles of $(\Id + K_\delta(\lambda))^{-1}$.  Indeed, Lemma \ref{lem:1k} shows that $(\Id + K_\delta(\lambda))^{-1} : L^2 \rightarrow L^2$, initially defined for $\lambda$ in a neighborhood of $i$, extends analytically to a family of bounded operators on $L^2$ for $\lambda \in  \C^+ \cup \Dd(E_\star,\tvar\delta)$. From \eqref{eq:3i}, we deduce that $\PP_\delta-\lambda$ is invertible away from the poles of $(\Id + K_\delta(\lambda))^{-1}$; and
\begin{equation}
(\PP_\delta - \lambda)^{-1} = Q_\delta(\lambda) \cdot \big(\Id + K_\delta(\lambda)\big)^{-1}.
\end{equation}
In particular, the eigenvalues of $\PP_\delta$ in $[E_\star-\tvar \delta, E_\star+\tvar\delta]$ are among the poles of $(\Id + K_\delta(\lambda))^{-1}$.

\begin{rmk}\label{degeneracy} The converse inclusion does not necessarily hold. This is because $Q_\delta(\lambda)$ may cancel poles of $(\Id + K_\delta(\lambda))^{-1}$. This phenomena unfortunately happens here. We will fix this issue in \S\ref{sec:5.3}.
\end{rmk}

\subsection{Expansion of $\Id + K_\delta(\lambda)$}\label{sec:5.2}
To expand the expression for $K_\delta(\lambda)$ in \eqref{Kd-def}, we first study the resolvent difference $(P_\delta-\lambda)^{-1} - (P_{-\delta}-\lambda)^{-1}$. We apply Theorem \ref{thm:1} and Corollary \ref{cor:1}. Observe that
\begin{equations}
\big( \Di_+ - z\big)^{-1} - \big( \Di_- - z\big)^{-1} = \dfrac{2 \bss}{\nu_\star^2 D_x^2+|\var_\star|^2- z^2}.
\end{equations}
Use $\lambda=E_\star+z \delta$, $z \in \Dd(0,\tvar)$. Theorem \ref{thm:1} and Corollary \ref{cor:1} imply
\begin{equations}
(P_\delta-\lambda)^{-1} - (P_{-\delta}-\lambda)^{-1} = \dfrac{2}{\delta} \matrice{ \phi_+^\star \\ \phi_-^\star  }^\top \UU_\delta \cdot \dfrac{\bss}{\nu_\star^2 D_x^2+|\var_\star|^2- z^2} \cdot \UU_\delta^{-1}\ove{\matrice{ \phi_+^\star \\ \phi_-^\star  }} + \OO_{L^2}(\delta^{-1/3}), \\
D_x\left((P_\delta-\lambda)^{-1} - (P_{-\delta}-\lambda)^{-1}\right) = \dfrac{2}{\delta} \matrice{ D_x\phi_+^\star \\ D_x\phi_-^\star  }^\top \UU_\delta \cdot  \dfrac{\bss}{\nu_\star^2 D_x^2+|\var_\star|^2- z^2}  \cdot \UU_\delta^{-1}\ove{\matrice{ \phi_+^\star \\ \phi_-^\star  }} + \OO_{L^2}(\delta^{-1/3}).
\label{Pdiff}
\end{equations}

From \eqref{Kd-def} and \eqref{Pdiff}, for $z \in \Dd(0,\tvar)$, the operator $\Id + K_\delta(E_\star+z \delta) $ splits as
\begin{equation}\label{eq:1j}
\Id + K_\delta(E_\star+z \delta) = \Id + \EE_\delta(z) + \KK_\delta(z),
\end{equation}
where $\EE_\delta(z) = \OO_{L^2}(\delta^{1/3})$, $z \in \Dd(0,\tvar)$ and $\KK_\delta(z)$ equals
\begin{equation}\label{eq:2p}
\left( 2(D_x\kappa)_\delta \matrice{ D_x\phi_+^\star \\ D_x\phi_-^\star  }^\top + (\kappa_\delta^2-1) W  \matrice{ \phi_+^\star \\ \phi_-^\star  }^\top \right) \UU_\delta \cdot  \dfrac{\bss}{\nu_\star^2 D_x^2+|\var_\star|^2- z^2}  \cdot \UU_\delta^{-1}\ove{\matrice{ \phi_+^\star \\ \phi_-^\star  }}.
\end{equation}
The operator $\KK_\delta(z)$ is a trace-class operator.
The trace-class property holds because $\KK_\delta(z)$ maps $L^2$ to $ H^2$ functions with fixed compact support. This is sufficient in dimension 1; see for instance \cite[(B.3.9)]{DZ}.

\section{Eigenvalues of $\mathscr{P}_\delta$ and the effective Dirac operator $\Di$}\label{sec:6}

In this section we show that the expansion of $\KK_\delta(z)$ given in \eqref{eq:2p} helps locate the eigenvalues of $\PP_\delta$. We will see that $\KK_\delta(z)$ is related to a matrix Schr\"odinger operator with localized, highly oscillatory potential. The weak -- or homogenized -- limit of this operator is $\Di{}^2-z^2$, which is not invertible precisely when $z$ or $-z$ is an eigenvalue~of~$\Di$.

This yields only a weak form of Theorem \ref{thm:2} -- see Proposition \ref{prop:1q} below. Indeed, the non-zero eigenvalues of $\Di{}^2$ are double -- because the spectrum of $\Di$ is symmetric about $0$ -- while Theorem \ref{thm:2} predicts that the point spectrum of $\PP_\delta$ is simple.

This paradox is illusive; zeros of $Q_\delta(\lambda)$ cancel poles of $(\Id + K_\delta(\lambda))^{-1}$. This phenomena is annoying, but it is very unstable. In the last step of the proof of Theorem \ref{thm:2}, we add a specifically-designed rank-one operator to the natural parametrix $Q_\delta(\lambda)$. This twists the parametrix and  deforms away spurious modes.

\subsection{Fredholm determinants}\label{fred-det} We refer to \cite[Appendix B]{DZ}. 
Let $\HH, \HH'$ be Hilbert spaces; let $\BB(\HH,\HH')$ be the space of bounded operators $\HH \rightarrow \HH'$. A compact operator $T \in \BB(\HH,\HH')$ is trace-class if the eigenvalues $\{\mu_j\}_{j=1}^\infty$ of $(TT^*)^{1/2} \in \BB(\HH')$ (called singular values of $T$) are summable. The space of trace-class operators on $\HH$ is denoted $\LL(\HH,\HH')$; when $\HH = \HH'$ we more simply write $\LL(\HH)$. The trace-class norm is
\begin{equation}
\|T\|_{\LL(\HH,\HH')} \de \sum_{j=1}^\infty \mu_j.
\end{equation}
The space $\LL(\HH,\HH')$ is a two-sided ideal: if $A  \in \BB(\HH',\HH)$ and $T \in \LL(\HH,\HH')$ then $AT \in \LL(\HH)$ and $TA \in \LL(\HH')$. See \cite[(B.4.6)]{DZ}.

If $T$ is trace-class (hence compact), we can approximate $T$ by a sequence $T_n$ of finite-rank operators in $\LL(\HH,\HH')$. When $\HH=\HH'$, the trace-class property  shows that $\Det(\Id + T_n)$ (well-defined because $T_n$ has finite rank) converges; and the limit depends on $T$ only. The limit is the \textit{Fredholm determinant} $\Det(\Id + T)$. See \cite[\S B.5]{DZ}. 

Fredholm determinants inherit most properties of their finite-dimensional analogs. Three of them are particularly relevant here:
\begin{itemize}
\item If $T \in \LL(\HH)$ then $\Det(\Id+T) = 0$ if and only if $\Id + T$ is not invertible -- see \cite[Proposition B.25]{DZ}.
\item The Fredholm determinant is cyclic -- see \cite[(B.5.13)]{DZ}: 
\begin{equation}\label{eq:2o}
A \in \BB(\HH',\HH), \ \ T \in \LL(\HH,\HH') \ \ \Rightarrow \ \  \Det(\Id + AT) = \Det(\Id+TA).
\end{equation}
\item If $z \in U \mapsto T(z)$ is holomorphic family of trace-class operators, then $z \in U \mapsto \Det(\Id+T(z))$ is holomorphic.
\end{itemize}
To prove the last property, we remark that it holds for holomorphic families of finite-rank operators. Hence $z \in U \mapsto \Det(\Id + T(z))$ is a uniform limit of holomorphic functions, thus holomorphic.

We will use the following lemma, proved in Appendix \ref{sec:A}:

\begin{lem}\label{lem:1v} For any $s \in (0,1/2)$, there exists $C$ with the following. Let $A, A' : L^2 \rightarrow L^2$ and $B : L^2 \rightarrow H^2$ such that $B$ is continuous from $H^{-s}$ to $H^{2-s}$. Then for any $\chi \in C_0^\infty(\R,[0,1])$, $A \chi B$ and $A'\chi B$ are trace-class and
\begin{equation}
\big|\Det(\Id+A \chi B) - \Det(\Id+A' \chi B)\big| \leq C \|A-A'\|_{H^s \rightarrow H^{-s}} \cdot \|\chi B\|_{H^{-s} \rightarrow H^{2-s}}.
\end{equation}
\end{lem}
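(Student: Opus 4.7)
The plan is to prove Lemma \ref{lem:1v} in three steps: establish trace class of $A\chi B$ and $A'\chi B$; apply a standard Fredholm-determinant perturbation bound; and control the trace-class norm of the difference $(A-A')\chi B$. Since $\chi \in C_0^\infty$ is compactly supported, $\chi B$ maps $L^2$ into $H^2$ with image in the compact set $\supp\chi$. By the one-dimensional Schatten criterion -- already used in \S\ref{sec:5.2} via \cite[(B.3.9)]{DZ} -- such an operator is trace class on $L^2$, with
\[
\|\chi B\|_{\LL(L^2)}\leq C_\chi\|\chi B\|_{L^2\to H^{2-s}}\leq C_\chi\|\chi B\|_{H^{-s}\to H^{2-s}},
\]
the second step from the contraction $L^2 \hookrightarrow H^{-s}$. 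The ideal property of $\LL(L^2)$ then gives trace class of $A\chi B$ and $A'\chi B$.

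I would then apply the standard determinant estimate
\[
|\Det(\Id+T) - \Det(\Id+T')|\leq \|T-T'\|_{\LL(L^2)}\exp(1+\|T\|_{\LL(L^2)}+\|T'\|_{\LL(L^2)}),
\]
with $T = A\chi B$, $T' = A'\chi B$, reducing the problem to bounding $\|(A-A')\chi B\|_{\LL(L^2)}$. The exponential factor, controlled by ambient norms of $A$, $A'$, $B$ and seminorms of $\chi$, is absorbed into the constant $C$.

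The key technical step is to show
\[
\|(A-A')\chi B\|_{\LL(L^2)}\leq C_{s,\chi}\|A-A'\|_{H^s\to H^{-s}}\|\chi B\|_{H^{-s}\to H^{2-s}}.
\]
The strategy is to factor $\chi B = \iota\circ T$, where $T: L^2 \to H^{2-s}_{\supp\chi}$ is the co-restriction of $\chi B$ (of norm bounded by $\|\chi B\|_{H^{-s}\to H^{2-s}}$) and $\iota: H^{2-s}_{\supp\chi}\hookrightarrow L^2$ is the Sobolev inclusion. In dimension one, the intermediate inclusion $H^{2-s}_{\supp\chi}\hookrightarrow H^s_{\supp\chi}$ has singular values of order $\langle k\rangle^{2s-2}$ on the compact $\supp\chi$, and this is summable precisely when $s<1/2$ -- the very range of the hypothesis. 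This intermediate inclusion is thus trace class with norm bounded by $C_{s,\chi}$. Composing with the $H^s\to H^{-s}$ boundedness of $A-A'$, and using cyclicity of the trace to bring the output back to $L^2$, yields the estimate.

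The principal obstacle is this last reconciliation: matching the $L^2\to L^2$ trace-class framework (which uses $L^2$ singular values) with the weaker $H^s\to H^{-s}$ control on $A-A'$, which only sees the $H^{-s}$ component of the output. The 1D trace-class Sobolev inclusion $H^{2-s}\hookrightarrow H^s$ -- valid exactly for $s\in(0,1/2)$ -- provides the missing decay and is what motivates the restriction on $s$ in the statement.
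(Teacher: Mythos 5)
Your overall plan is close to the paper's up through the determinant perturbation inequality, but Step 3 --- the claimed bound
\[
\|(A-A')\chi B\|_{\LL(L^2)}\ \leq\ C_{s,\chi}\,\|A-A'\|_{H^s\to H^{-s}}\,\|\chi B\|_{H^{-s}\to H^{2-s}}
\]
--- is false, and this failure is exactly the difficulty the paper's proof is designed to circumvent. To see it, take $B=\blr{D}^{-2}$, $A'=0$, and $A = e_n\otimes h$, where $h$ is fixed and $e_n(x)=c_n\psi(x)e^{inx}$ with $\psi\in C_0^\infty$ and $c_n$ chosen so that $|e_n|_{L^2}=1$. Then $(A-A')\chi B = e_n\otimes (B^*\chi h)$ has $\LL(L^2)$-norm equal to $|e_n|_{L^2}\cdot|B^*\chi h|_{L^2} = |B^*\chi h|_{L^2}$, a fixed positive constant, while $\|A-A'\|_{H^s\to H^{-s}} = |e_n|_{H^{-s}}\,|h|_{H^{-s}}\sim n^{-s}\to 0$. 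The oscillation of $e_n$ makes the output side of the rank-one operator small in $H^{-s}$ but of size one in $L^2$, and the trace-class norm on $L^2$ only sees the $L^2$ size. So the $H^s\to H^{-s}$ control on $A-A'$ simply does not control $\|(A-A')\chi B\|_{\LL(L^2)}$.

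What saves the lemma is that the Fredholm \emph{determinant}, unlike the trace-class norm, is invariant under conjugation, including by the unbounded weights $\blr{D}^{\pm s}$. The paper uses the cyclicity \eqref{eq:2o} at the outset to write
\[
\Det(\Id+A\chi B) = \Det\!\left(\Id + \blr{D}^{-s}A\blr{D}^{-s}\cdot \blr{D}^{s}\chi B\blr{D}^{s}\right),
\]
and only then applies the determinant perturbation bound, to the conjugated operators. In this form the two factors decouple exactly as desired: $\left\|\blr{D}^{-s}(A-A')\blr{D}^{-s}\right\|_{L^2}$ is precisely $\|A-A'\|_{H^s\to H^{-s}}$, and $\blr{D}^s\chi B\blr{D}^s$ is trace class on $L^2$ with norm controlled by $\|\chi B\|_{H^{-s}\to H^{2-s}}$, via the compactly supported one-dimensional Sobolev embedding of order $2-2s>1$ (this is where $s<1/2$ enters --- the same place you identified). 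Your closing phrase, ``using cyclicity of the trace to bring the output back to $L^2$,'' gestures at the right idea but deploys it too late: once you have committed to estimating $\|(A-A')\chi B\|_{\LL(L^2)}$, conjugation cannot rescue you, because the trace-class norm is not conjugation invariant. The cyclicity must be applied to the determinant itself, before the perturbation estimate.
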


\subsection{The Dirac operator}\label{sec:7.3}

We introduce the Dirac operator $\Di \de \nu_\star \bst D_x + \bss \kappa$, where we recall that
\begin{equation}
\nu_\star = 2\blr{ \phi_+^\star,D_x\phi_+^\star},  \ \ \ \  \var_\star = \blr{\phi_+^\star,W\phi_-^\star}, \ \ \ \ \bss = \matrice{0 & \ove{\var_\star} \ \\ \var_\star & 0}.
\end{equation}
It is a selfadjoint operator on $L^2$ with domain $H^1$. It will emerge naturally in the study of the spectrum of $\PP_\delta$. 

\begin{thm}\label{lem:1b} The following holds:
\begin{itemize}
\item[(1)]  $\Sigma(\Di)$ is independent of the choice of $(\phi_+^\star, \phi_-^\star)$ in Proposition \ref{prop:1w}.
\item[(2)] The essential spectrum of $\Di$ is $\R \setminus (-|\var_\star|, |\var_\star|)$.
\item[(3)] The pure point spectrum of $\Di$ consists of finitely many simple eigenvalues
\begin{equation}
-|\var_\star| < \var_{-N} < \dots < \var_{-1} < \var_0 = 0 < \var_1 < \dots < \var_N < |\var_\star|,
\end{equation}
satisfying $\var_{-j} = -\var_j$.
\end{itemize}
\end{thm}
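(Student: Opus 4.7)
The proof will rest on two structural observations. First, since $\{\bst,\bss\} = 0$ and $\bss^2 = |\var_\star|^2 \Id$,
\begin{equation}
\Di^2 \ = \ \nu_\star^2 D_x^2 \ + \ \nu_\star\, \bst\bss\,(D_x \kappa) \ + \ |\var_\star|^2 \kappa^2.
\end{equation}
Because $D_x \kappa$ and $\kappa^2 - 1$ are compactly supported (by \eqref{eq:8g}), $\Di^2 - |\var_\star|^2$ is a compactly supported matrix perturbation of $\nu_\star^2 D_x^2$. Second, $\Di$ enjoys a chiral (particle-hole) symmetry: the matrix $\Gamma := -\Im(\var_\star)\bsu + \Re(\var_\star)\bsd$ anticommutes with both $\bst$ and $\bss$ and satisfies $\Gamma^2 = |\var_\star|^2 \Id$; setting $\tilde\Gamma := \Gamma/|\var_\star|$ one obtains $\tilde\Gamma\,\Di\,\tilde\Gamma = -\Di$, so $\Sigma(\Di)$ is invariant under $\mu \mapsto -\mu$.

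Part (1) is a direct computation. Replacing the Dirac eigenbasis $(\phi_+^\star,\phi_-^\star)$ by $(\w\phi_+^\star,\ow\phi_-^\star)$ with $\w \in \Ss^1$ preserves $\nu_\star$ and sends $\var_\star$ to $\w^2 \var_\star$. Conjugation by the constant diagonal unitary $U = \operatorname{diag}(\w,\ow)$ preserves $\bst$ and maps the new $\bss$ back to the original one; the resulting Dirac operators are unitarily equivalent and share a spectrum.

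For (2), Weyl's theorem applied to $\Di^2$ as a compactly supported perturbation of $\nu_\star^2 D_x^2 + |\var_\star|^2\Id$ yields $\Sigma_\ess(\Di^2) = [|\var_\star|^2,\infty)$. The spectral mapping theorem for the continuous function $t \mapsto t^2$ on the selfadjoint operator $\Di$ then gives $\Sigma_\ess(\Di) \subset (-\infty,-|\var_\star|] \cup [|\var_\star|,\infty)$; the reverse inclusion comes from combining spectral mapping in the opposite direction with the chiral symmetry $\tilde\Gamma$, which forces $\pm\sqrt{s} \in \Sigma_\ess(\Di)$ for every $s \geq |\var_\star|^2$.

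For (3), eigenvalues of $\Di$ in $(-|\var_\star|,|\var_\star|)$ are in bijection with eigenvalues of $\Di^2 - |\var_\star|^2$ in $[-|\var_\star|^2,0)$, and there are finitely many of the latter by a Bargmann-type count since the perturbation of $\nu_\star^2 D_x^2$ is compactly supported. Symmetry about $0$ of the point spectrum is the chiral symmetry applied to eigenvectors. Simplicity follows from an ODE count: $(\Di-\mu)u = 0$ is a two-dimensional first-order linear system, and for $|\mu| < |\var_\star|$ the constant-coefficient asymptotic systems $\Di_\pm = \nu_\star\bst D_x \pm \bss$ have real characteristic exponents of opposite sign; hence at each infinity the $L^2$-solution space is one-dimensional and the eigenspace (their intersection) has dimension at most one. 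To produce the zero mode explicitly, multiplication of $\Di u = 0$ by $\bst$ together with $\bst\bss = i\Gamma$ converts the equation to the integrable system $\p_x u = \nu_\star^{-1}\,\Gamma\,\kappa\, u$, whose propagator is $\exp\!\bigl(\Gamma K(x)/\nu_\star\bigr)$ with $K(x) = \int_0^x \kappa(t)\,dt$. Since $K(x) \to +\infty$ at both $\pm\infty$ by the domain wall structure of $\kappa$, choosing $u(0)$ in the $\Gamma$-eigenspace whose eigenvalue has sign opposite to $\nu_\star$ produces a decaying $L^2$ zero mode. The main obstacle is (2): $\Di - \Di_\pm$ is bounded but not compact, ruling out a direct application of Weyl's theorem at the level of $\Di$; the workaround is to square, thereby collapsing the non-compact piece into a compactly supported potential, and to use chiral symmetry to undo the squaring afterwards.
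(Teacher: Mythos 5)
Your proposal is correct, and it departs from the paper's proof in two places, both in ways that make the argument more self-contained and elementary. For part (2), the paper cites \cite[\S4.1]{FLTW2} for the essential spectrum and the zero mode; you instead give a direct argument via the squaring identity $\Di^2 - |\var_\star|^2 = \nu_\star^2 D_x^2 + |\var_\star|^2(\kappa^2-1) + \nu_\star(D_x\kappa)\bst\bss$, Weyl's theorem, the spectral mapping property of $t\mapsto t^2$ for selfadjoint operators, and the chiral symmetry to pull back from $\Sigma_\ess(\Di^2)$ to $\Sigma_\ess(\Di)$. This is exactly the device the paper already uses (equation \eqref{eq:1p}) to show the eigenvalues do not accumulate at the band edges; you push it further to recover the essential spectrum too, which is a clean unification. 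For simplicity in part (3), the paper develops the Evans-function machinery of Appendix \ref{app:3}; your dimension count — that for $|\mu|<|\var_\star|$ each of the asymptotic constant-coefficient systems $(\Di_\pm-\mu)u=0$ has one-dimensional decaying solution space, so the eigenspace is the intersection of two lines in a two-dimensional solution space — is shorter and entirely sufficient for geometric (hence, by selfadjointness, also algebraic) simplicity. What the Evans-function approach buys is the resolvent kernel formula and the derivative identities of Propositions \ref{prop:1a}--\ref{simple}; for the statement of Theorem \ref{lem:1b} alone that extra structure is not needed. Your explicit propagator $\exp(\Gamma K(x)/\nu_\star)$ construction of the zero mode (with $K(x)=\int_0^x\kappa$) is a nice addition not spelled out in the paper. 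Two small points of rigor worth tightening: (i) the claimed \emph{bijection} between eigenvalues of $\Di$ in $(-|\var_\star|,|\var_\star|)$ and eigenvalues of $\Di^2-|\var_\star|^2$ in $[-|\var_\star|^2,0)$ is actually two-to-one away from $0$ — but since you only need finiteness, this is harmless; (ii) the ``reverse inclusion'' step for $\Sigma_\ess(\Di)$ deserves one explicit line, e.g.\ factoring $\Di^2-s=(\Di-\sqrt s)(\Di+\sqrt s)$ and using that a product of Fredholm operators is Fredholm, before invoking the chiral symmetry to get both $\pm\sqrt s$.
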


\begin{proof} We start with the first point. If we choose another Dirac eigenbasis satisfying Proposition \ref{prop:1w} then $\nu_\star$ remains the same; and $\var_\star$ transforms to $\w^2 \var_\star$, where $\w \in \Ss^1$. The operator $\Di$ tranforms to
\begin{equation}\label{eq:1v}
\nu_\star \bst D_x + \matrice{0 & \ove{\w^2 \var_\star} \ \\ \w^2 \var_\star & 0} \kappa.
\end{equation}
We observe that
\begin{equations}
\matrice{\ow & 0 \\ 0 & \w} \bst \matrice{\w & 0 \\ 0 & \ow} = |\w|^2 \bst = \bst;
\\
\matrice{\ow & 0 \\ 0 & \w} \bss \matrice{\w & 0 \\ 0 & \ow} = \matrice{\ow & 0 \\ 0 & \w} \matrice{0 & \ove{\var_\star} \ \\ \var_\star & 0} \matrice{\w & 0 \\ 0 & \ow} = \matrice{0 & \ove{\w^2 \var_\star} \ \\ \w^2 \var_\star & 0}.
\end{equations}
This implies that $\Di$ and the operator \eqref{eq:1v} are conjugated. Therefore they have the same spectrum. This proves (1).

The assertions regarding the essential spectrum of $\Di$ and the zero mode were proved in \cite[\S 4.1]{FLTW2} when $\var_\star \in \R$. The same proofs apply to the more general case $\var_\star \in \C$. We next show that eigenvalues of $\Di$ do not accumulate at the edges $\pm |\var_\star|$ of the essential spectrum. A computation using $\bst \bss = - \bst \bss$ and $\bst^2 = \Id_2$, $\bss^2 = |\var_\star|^2 \Id_2$ yields
\begin{equation}\label{eq:1p}
\Di^2 - |\var_\star|^2  = \nu_\star^2 D_x^2 + \MM_0, \ \ \ \ \MM_0 \de |\var_\star|^2 \left(\kappa^2-1\right) + \nu_\star  (D_x\kappa)\bst \bss.
\end{equation}
In particular,  $\Di^2 - |\var_\star|^2$ is a Schr\"odinger operator with compactly supported potential. Therefore, its resonances (in particular, its eigenvalues) are isolated -- see e.g. \cite[Theorem 2.2]{DZ}. It follows that $\Di$ has only isolated eigenvalues.  

To show the symmetry of the spectrum, we note the identity
\begin{equation}\label{eq:1o}
\bst \bss \cdot \Di \cdot (\bss \bst)^{-1} = -\Di,
\end{equation}
which follows from the relations $\bst \bss = - \bst \bss$, $\bst^2 = \Id_2$ and $\bss^2 = |\var_\star|^2 \Id_2$. The identity \eqref{eq:1o} implies that $\Di$ and $-\Di$ have same spectra; in particular, $\Sigma(\Di)$ must be symmetric about $0$.

The proof that all eigenvalues of $\Di$ are simple is presented in Appendix \ref{app:3}.
\end{proof}

\begin{rmk} The Dirac operator $\Di$ may have arbitrarily many eigenvalues. Fix $N \in \N$. The paper \cite{LWW} studies the point spectrum of  the Dirac operator $\Di_a=\nu_\star\bst D_x +  \var_\star \bsu  \kappa_a$, where $\kappa_a$ is constructed by gluing together $2N+1$ \textit{reference domain walls} $\kappa$ separated by a distance $a$ -- see Figure \ref{fig:4}. If $\epsi > 0$ is given and $\nu_\star\bst D_x +  \var_\star \bsu  \kappa$ has a single eigenvalue then for $a$ sufficiently large, \cite{LWW} shows that $\Di_a$ has precisely $2N+1$ eigenvalues in $(-\var_\sharp+\epsi,\var_\sharp-\epsi)$. 
\end{rmk}

 \begin{center}
\begin{figure}
\begin{tikzpicture}
      \draw[thick,->] (-7.5,0) -- (7.5,0);
      \draw[thick,->] (0,-2) -- (0,2);
      \draw[blue,thick,->] (1,.8) -- (3.3,.8);
      \draw[blue,thick,<-] (-.3,.8) -- (1,.8);
      \node at (7.2,-.3) {$x$};
      \node at (1.5,1.1) {$\sim a$};
      
      \node[red] at (7,1.8) {$\kappa_a$};
   \draw[thick,domain=-1.5:1.5,smooth,variable=\x,red] plot ({\x},{ -1.5* (e^(4*\x)-1)/(e^(4*\x)+1)  });
   \draw[thick,domain=-4.5:1.5,smooth,variable=\x,red] plot ({\x-3},{ 1.5* (e^(4*\x)-1)/(e^(4*\x)+1)  });
   \draw[thick,domain=-1.5:4.5,smooth,variable=\x,red] plot ({\x+3},{ -1.5* (e^(-4*\x)-1)/(e^(-4*\x)+1)  });
\end{tikzpicture} 
\label{fig:4}{\caption{The function $\kappa_a$ is the concatenation of $3$ ``single-bump" domain walls separated by $a$.}}
\end{figure}
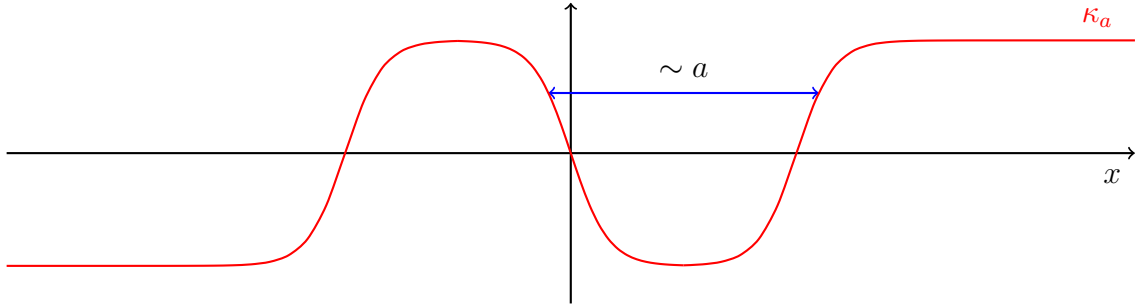
\end{center}
 
\vspace*{-9mm}

\subsection{Localization of eigenvalues}\label{ev-loc}

\begin{prop}\label{prop:1q} Let $\var_N < \tvar < |\var_\star|$
and define
\begin{equation}
\ZZ_\delta \de  \big\{ z \in \Dd(0,\tvar) : E_\star+z \delta \in \Sigma(\PP_\delta) \big\}.
\end{equation}
Then $\dist(\ZZ_\delta,\Sigma_\pp(\Di))$ tends to $0$ as $\delta$ tends to $0$. 
\end{prop}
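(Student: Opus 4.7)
The strategy is to realize $\ZZ_\delta$ as zeros of a Fredholm determinant, compute a limit as $\delta \to 0$, and conclude by Hurwitz's theorem. By Proposition \ref{prop:1m} and the factorization $(\PP_\delta-\lambda)^{-1} = Q_\delta(\lambda)(\Id + K_\delta(\lambda))^{-1}$, each $z \in \ZZ_\delta$ is a zero of the holomorphic function
\[ F_\delta(z) \de \Det_{L^2}\!\bigl(\Id + K_\delta(E_\star + z\delta)\bigr), \qquad z \in \Dd(0,\tvar), \]
which is well-defined because $K_\delta(\lambda)$ takes values in $H^2$-functions with a fixed compact support, hence is trace-class. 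It suffices to prove that $F_\delta \to F_0$ locally uniformly on $\Dd(0,\tvar)$ for some holomorphic $F_0 \not\equiv 0$ whose zero set equals $\Sigma_\pp(\Di)$; Hurwitz then yields $\dist(\ZZ_\delta, \Sigma_\pp(\Di)) \to 0$.

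Decompose $\Id + K_\delta(E_\star + z\delta) = \Id + \EE_\delta(z) + \KK_\delta(z)$ as in \eqref{eq:1j}, with $\EE_\delta(z) = \OO_{L^2}(\delta^{1/3})$ and $\KK_\delta(z)$ the trace-class operator \eqref{eq:2p}. Lemma \ref{lem:1v}, together with elliptic regularity of the bulk resolvents (to upgrade the bound on $\EE_\delta$ from $L^2 \to L^2$ to $H^s \to H^{-s}$), gives $F_\delta(z) = \Det(\Id + \KK_\delta(z)) + o(1)$, uniformly on compacts. I then factor $\KK_\delta(z) = \alpha_\delta(z) \beta$, with $\beta : L^2 \to L^2(\C^2)$ pointwise multiplication by $\ove{\matrice{\phi_+^\star \\ \phi_-^\star}}$ and $\alpha_\delta(z) : L^2(\C^2) \to L^2$ the composition of $\UU_\delta \bss(\nu_\star^2 D_x^2 + |\var_\star|^2 - z^2)^{-1} \UU_\delta^{-1}$ with the row-vector multiplication standing on the left in \eqref{eq:2p}. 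Cyclicity of the Fredholm determinant and conjugation by the unitary $\UU_\delta^{-1}$ turn $\Det(\Id + \KK_\delta(z))$ into
\[ \Det_{L^2(\C^2)}\!\bigl(\Id + \widetilde N_\delta \cdot M_0(z)\bigr), \qquad M_0(z) \de \bss(\nu_\star^2 D_x^2 + |\var_\star|^2 - z^2)^{-1}, \]
where $\widetilde N_\delta(x) = N_\delta(x/\delta)$ has a slow envelope built from $\kappa$ and $D_x\kappa$ and a $1$-periodic rapid factor built from $\phi_\pm^\star$, $D_x \phi_\pm^\star$ and $W$.

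As $\delta \to 0$, the rapid factor converges weakly to its period average. Using Proposition \ref{prop:1p} one computes
\[ \overline N(x) = \nu_\star (D_x\kappa)(x) \bst + (\kappa^2 - 1)(x) \ove{\bss}, \qquad \overline N(x) \bss = \MM_0(x), \]
with $\MM_0$ as in \eqref{eq:1p}; the last identity uses $\ove{\bss}\bss = |\var_\star|^2 \Id$ and $\bss\bst = -\bst\bss$. Since $\Id + \overline N M_0(z)$ becomes $(\nu_\star^2 D_x^2 + |\var_\star|^2 - z^2)^{-1}(\Di^2 - z^2)$ after right-multiplication by the fixed invertible $\nu_\star^2 D_x^2 + |\var_\star|^2 - z^2$, the limit determinant $F_0(z) \de \Det(\Id + \overline N M_0(z))$ vanishes precisely when $\Di^2 - z^2$ has a kernel, i.e.\ when $z^2 = \var_j^2$ for some $j$. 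By the symmetry $\var_{-j} = -\var_j$ (Theorem \ref{lem:1b}), this is equivalent to $z \in \Sigma_\pp(\Di)$, and $F_0 \not\equiv 0$ on $\Dd(0,\tvar)$ since $\Di^2 - z^2$ is invertible away from $\Sigma_\pp(\Di)$.

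The main obstacle will be making the homogenization step rigorous in a topology strong enough to pass to the limit in the Fredholm determinant. The key asset is that $M_0(z)$, followed by multiplication by the compactly supported slow envelope, maps $L^2$ continuously into $H^2$-functions of fixed compact support; this upgrades the classical weak $L^2$-convergence of $1$-periodic functions evaluated at the argument $x/\delta$ to trace-class convergence of the product $\widetilde N_\delta M_0(z) \to \overline N M_0(z)$, uniformly in $z$ on compacts. This is a special case of the oscillatory scattering framework of \cite{Dr2} cited in overview step (E); only a very weak form of that machinery is required, namely weak-strong continuity against a fixed smoothing operator with compactly supported image. Once this is in hand, the convergence $F_\delta \to F_0$ combined with Hurwitz's theorem concludes the proof.
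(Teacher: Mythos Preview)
Your approach is essentially identical to the paper's: Fredholm determinant, cyclicity plus $\UU_\delta$-conjugation to reach $\Det\bigl(\Id + \MM_\delta(\nu_\star^2 D_x^2 + |\var_\star|^2 - z^2)^{-1}\bigr)$, homogenization of $\MM_\delta$ via Lemma~\ref{lem:1u} and Lemma~\ref{lem:1v}, identification of the limiting zero set with $\Sigma_\pp(\Di)$ through $\Di^2 - z^2$, and Hurwitz. Two technical points deserve comment. First, your claim that $K_\delta(\lambda)$ has range in $H^2$-functions with \emph{fixed} compact support is not correct: the support of $\kappa_\delta^2-1$ is $[-L/\delta,L/\delta]$, which grows as $\delta\to0$. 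Trace-class still holds for each $\delta$, but uniform trace-norm bounds are only available \emph{after} cycling and conjugation by $\UU_\delta^{-1}$, where the envelope becomes $\kappa^2-1$ with fixed support. Second, your invocation of Lemma~\ref{lem:1v} to pass from $F_\delta$ to $\Det(\Id+\KK_\delta)$ is awkward: that lemma compares $A\chi B$ with $A'\chi B$ where the multiplication factor $A$ varies and the smoothing factor $B$ is fixed, whereas in $K_\delta-\KK_\delta=\EE_\delta$ it is the resolvent factor that changes. The paper avoids both issues by never writing $\Det(\Id+K_\delta)$: it observes that $\Id+\EE_\delta$ is invertible in $\BB(L^2)$ for small $\delta$, so $z\in\ZZ_\delta$ forces $\DDD_\delta(z)\de\Det\bigl(\Id+(\Id+\EE_\delta(z))^{-1}\KK_\delta(z)\bigr)=0$; comparing $\DDD_\delta$ with $D_\delta=\Det(\Id+\KK_\delta)$ then requires only the operator-norm bound $\EE_\delta=\OO_{L^2}(\delta^{1/3})$ together with a uniform trace bound on $\KK_\delta$ (available after cycling). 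This is a cleaner bookkeeping of the same idea.
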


Proposition \ref{prop:1q} shows that the eigenvalues of $\PP_\delta$ in $[E_\star-\tvar\delta,E_\star+\tvar\delta]$ must necessarily be of the form $E_\star + \var_j \delta + o(\delta)$, where the $\var_j$'s are the eigenvalues of the effective Dirac operator $\Di$. This proves the assertion (A) of Theorem \ref{thm:2}.

\begin{proof} 1. If $\lambda$ is an eigenvalue of $\PP_\delta$, then $\Id+K_\delta(\lambda)$ is not invertible. We recall that $\Id + K_\delta(E_\star+z \delta)$ decomposes as
\begin{equation} 
\Id + \EE_\delta(z) + \KK_\delta(z),
\end{equation}
where $\EE_\delta(z) = \OO_{L^2}(\delta^{1/3})$ and $\KK_\delta(z)$ is trace-class, see \eqref{eq:1j}. Hence, for $\delta$ sufficiently small, $\Id + \EE_\delta(z)$ is invertible and
\begin{equation}
 \lambda=E_\star+z \delta \in \Sigma(\PP_\delta) \cap \Dd(E_\star,\var_\sharp \delta) \ \  \Rightarrow  \ \ \Id + \big(\Id + \EE_\delta(z)\big)^{-1} \KK_\delta(z) \text{ is not invertible.}
\end{equation}
Since $\KK_\delta(z)$ is a trace-class operator we deduce that
\begin{equations}\label{eq:1q}
 \lambda=E_\star+z \delta \in \Sigma(\PP_\delta) \cap \Dd(E_\star,\var_\sharp \delta) \\ 
 \Rightarrow \ \ \DDD_\delta(z) = 0, \ \ \ \
\DDD_\delta(z) \de \Det\left(\Id+\big(\Id+\EE_\delta(z)\big)^{-1} \KK_\delta(z)\right).
\end{equations}
Therefore we get $\ZZ_\delta \subset \DDD_\delta^{-1}(0) \cap \Dd(0,\tvar)$.

2. We have reduced the proof of Proposition \ref{prop:1q} to a problem in complex analysis: locate the zeros of $\DDD_\delta$. Note first that since $\EE_\delta(z) = \OO_{L^2}(\delta^{1/3})$, uniformly in $z\in\ove{\Dd(0,\tvar)}$, we have
\begin{equation}
\DDD_\delta(z) = D_\delta(z) + O\left(\delta^{1/3}\right), \ \ \   D_\delta(z) \de \Det\big(\Id+\KK_\delta(z)\big).
\end{equation}

3. Using the cyclicity of the determinant \eqref{eq:2o}, we rewrite the Fredholm determinant $D_\delta(z)$ as the determinant of an operator acting on $L^2(\R,\C^2)$.
Specifically  permuting the matrix term on the right implies that $D_\delta(z)$ equals
\begin{equations}
\Det\left( \Id +   \left( 2(D_x\kappa)_\delta \matrice{ D_x\phi_+^\star \\ D_x\phi_-^\star  }^\top + (\kappa_\delta^2-1) W  \matrice{ \phi_+^\star \\ \phi_-^\star  }^\top \right) \UU_\delta \cdot  \dfrac{\bss}{\nu_\star^2 D_x^2+|\var_\star|^2- z^2}  \cdot \UU_\delta^{-1}\ove{\matrice{ \phi_+^\star \\ \phi_-^\star  }} \right)
\end{equations}
\begin{equations}
 = \Det\left( \Id + \UU_\delta^{-1}\ove{\matrice{ \phi_+^\star \\ \phi_-^\star  } } \left( 2(D_x\kappa)_\delta \matrice{ D_x\phi_+^\star \\ D_x\phi_-^\star  }^\top + (\kappa_\delta^2-1) W  \matrice{ \phi_+^\star \\ \phi_-^\star  }^\top \right) \UU_\delta \cdot  \dfrac{\bss}{\nu_\star^2 D_x^2+|\var_\star|^2- z^2}   \right)
\end{equations}
\begin{equations}
 = \Det\left( \Id + \MM_\delta \left(\nu_\star^2 D_x^2+|\var_\star|^2- z^2\right)^{-1} \right).
\end{equations}

\noindent Here, $\MM_\delta(x)= \MM(x/\delta,x)$ is a matrix-valued multiscale highly oscillatory potential. Specifically,  $\MM(x,y)$ is $1$-periodic in $x$, compactly supported in $y$, smooth in both variables equal to
\begin{equation}\label{eq:3l}
 \ove{\matrice{ \phi_+^\star(x) \\ \phi_-^\star(x)  } } \left( 2(D_x\kappa)(y) \matrice{ D_x\phi_+^\star(x) \\ D_x\phi_-^\star(x)  }^\top + \left(\kappa(y)^2-1 \right) W (x) \matrice{ \phi_+^\star(x) \\ \phi_-^\star(x)  }^\top \right)  \bss.
\end{equation}

4. We would like to expand $D_\delta(z)$.  Due to the oscillations of $\MM_\delta$,  the operator $\Id + \MM_\delta \left(\nu_\star^2 D_x^2+|\var_\star|^2- z^2\right)^{-1} $ does not converge
in the operator norm. The $L^2$-weak limit of $\MM_\delta$ (corresponding to the average with respect to $x$) is computed using that as $\delta \to 0$:
\begin{equation}
2\ove{\matrice{ \phi_+^\star(\cdot/\delta) \\ \phi_-^\star(\cdot/\delta)  } }\ \matrice{ D_x\phi_+^\star(\cdot/\delta) \\ D_x\phi_-^\star(\cdot/\delta)  }^\top\ \rightharpoonup\ \nu_\star \bst\ \ \textrm{and}\ \  \ove{\matrice{ \phi_+^\star(\cdot/\delta) \\ \phi_-^\star(\cdot/\delta)  } }\ W \ \matrice{\phi_+^\star(\cdot/\delta) \\ \phi_-^\star(\cdot/\delta)  }^\top\  
\rightharpoonup\ \bss.
\label{w-limits}
\end{equation}
We find $\MM_\delta \rightharpoonup \MM_0$, where $\MM_0$ already appeared in \eqref{eq:1p}:
\begin{equation}
\MM_\delta \rightharpoonup \left((D_x \kappa) \nu_\star \bst + \left(\kappa^2-1\right) \bss\right) \bss = |\var_\star|^2 \left(\kappa^2-1\right) + \nu_\star  (D_x\kappa)\bst \bss = \MM_0.
\label{W0-def}
\end{equation}
We now show that $D_\delta(z)$ can be expanded about the formal weak limit 
\begin{equation}
D_0(z) \de \Det\left(\Id + \MM_0 \left(\nu_\star^2 D_x^2+|\var_\star|^2-z^2\right)^{-1}\right).
\end{equation}
Specifically, using Lemma \ref{lem:1u}, we have $|\MM_\delta-\MM_0|_{H^{-1/4}} = O(\delta^{1/4})$. As a multiplication operator, $\MM_\delta-\MM_0$ is bounded from $H^{1/4}$ to $H^{-1/4}$ with bound
\begin{equation}
\| \MM_\delta-\MM_0 \|_{H^{1/4} \rightarrow H^{-1/4}} = O\left(\delta^{1/4}\right),
\end{equation}
see \cite[Lemma 2.1]{Dr4}. Because of Lemma \ref{lem:1v} we deduce that
\begin{equation}
D_\delta(z) = D_0(z) + O\left(\delta^{1/4}\right).
\label{DzD0z}
\end{equation}

5. A complex number $z \in \Dd(0,\tvar)$ is a zero of $D_0$ if and only if the operator  $\nu_\star^2 D_x^2+|\var_\star|^2 + \MM_0 - z^2$ is not invertible. Because of \eqref{eq:1p}, this operator is precisely $(\Di-z)(\Di+z)$. Hence, the zeros of $D_0(z)$ in $\Dd(0,\tvar)$ are exactly the eigenvalues of $\Di$ or $-\Di$. Since $\Di$ and $-\Di$ have the same spectrum -- see Theorem \ref{lem:1b} -- we deduce that the zeros of $D_0(z)$ are exactly the eigenvalues of $\Di$, with twice their multiplicities.

6. From \eqref{DzD0z}, $|\DDD_\delta(z) - D_0(z)| = O(\delta^{1/4})$ for $z\in\ove{\Dd(0,\tvar\delta)}$.  Hurwitz's theorem implies that the zeros of $\DDD_\delta$ converge to those of $D_0$: $\dist\left( \DDD_\delta^{-1}(0), \ D_0^{-1}(0) \right) \rightarrow 0$. Proposition \ref{prop:1q} now follows from \eqref{eq:1q} and~Step~5.
\end{proof}

\subsection{The twisted parametrix}\label{sec:5.3} In order to conclude the proof of Theorem \ref{thm:2}, we must show that the eigenvalues of $\Di$ each seed \textit{at most} one eigenvalue of $\PP_\delta$. However, a deeper look at the proof of Proposition \ref{prop:1q} shows that the poles of $(\Id+K_\delta(\lambda))^{-1}$ have twice the multiplicity of the eigenvalues of $\Di$.

If one believes in Theorem \ref{thm:2}, there is only one explanation: $Q_\delta(\lambda)$ must cancel out half the multiplicity of the poles of $(\Id+K_\delta(\lambda))^{-1}$. This surprising phenomena is however very unstable. We take advantage of this instability to produce a quick fix.

Given a family of rank-one operators $\Pi_\delta(\lambda) : L^2 \rightarrow H^2$ (depending on $\delta > 0$ and holomorphically on $\lambda$), we define a {\it twisted parametrix} $Q^\Pi_\delta(\lambda)$ by
\begin{equation}
Q^\Pi_\delta(\lambda) = Q_\delta(\lambda) + \Pi_\delta(\lambda).
\end{equation}
To this twisted parametrix corresponds a family of operators $K^\Pi_\delta(\lambda)$ such that 
\begin{equation}\label{eq:3s}
(\PP_\delta-\lambda)Q_\delta^\Pi(\lambda) = \Id + K_\delta(\lambda) + (\PP_\delta-\lambda)\Pi_\delta(\lambda) \de  \Id + K^\Pi_\delta(\lambda).
\end{equation}
Since $K^\Pi_\delta(\lambda)$ is a rank-one perturbation of $K_\delta(\lambda)$, it still forms a holomorphic family of compact operators on $L^2$. 

We show that $\Id + K_\delta^\Pi(\lambda_\delta)$ is invertible for some  $\lambda_\delta\in\C$; \cite[Theorem C.5]{DZ}  provides the meromorphic continuation of $(\Id + K_\delta^\Pi(\lambda_\delta))^{-1}$. We deduce that the resolvent of $\PP_\delta$ is expressed in terms of the twisted parametrix:
\begin{equation}
(\PP_\delta-\lambda)^{-1} = Q_\delta^\Pi(\lambda) \cdot \left( \Id + K^\Pi_\delta(\lambda) \right)^{-1}.
\end{equation}
In particular, the eigenvalues of $\PP_\delta$ must be within the poles of $\big(\Id + K^\Pi_\delta(\lambda)\big)^{-1}$. 

In the rest of the section, we consider operators $\Pi_\delta(\lambda)$ in the form
\begin{equation}\label{eq:3z}
\Pi_\delta(\lambda) = \dfrac{1}{\delta} \matrice{ \phi_+^\star \\ \phi_-^\star  }^\top \cdot \UU_\delta \cdot  f  \otimes g \cdot \UU_\delta^{-1} \cdot \ove{\matrice{ \phi_+^\star \\ \phi_-^\star  }}.
\end{equation}
Above, $f, g \in H^2(\R,\C^2)$ may depend on $\delta \in (0,1)$ and $\lambda \in \Dd(E_\star,\tvar \delta)$. They will be explicitly determined. 
 
\begin{prop}\label{lem:1c} Let $\tPi_\delta(z) \de (\PP_\delta-E_\star-z \delta) \cdot \Pi_\delta(E_\star+z \delta)$. There exists $C > 0$ such that for any $\delta > 0$, 
\begin{equation} 
\left\|\tPi_\delta(z)\right\|_{L^2} \leq C(1+|z|) \cdot |f|_{H^2} \cdot |g|_{L^2}.
\end{equation}
\end{prop}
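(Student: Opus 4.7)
\textbf{Proof proposal for Proposition \ref{lem:1c}.} The plan is to exploit the rank-one structure of $\Pi_\delta(\lambda)$ and the fact that $\phi_\pm^\star$ are $P_0$-eigenfunctions for the eigenvalue $E_\star$, so that applying $\PP_\delta - E_\star - z\delta$ to the modulated envelope $\UU_\delta f$ produces only terms of size $O(\delta)$, which then cancels the explicit prefactor $\delta^{-1}$ in \eqref{eq:3z}.

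First, I would rewrite $\Pi_\delta(\lambda)$ as a genuinely rank-one operator on $L^2$. Using $\UU_\delta^{-1}(f \otimes g)\UU_\delta = (\UU_\delta f) \otimes (\UU_\delta g)$ (since $\UU_\delta$ is unitary) and writing $f = (f_+,f_-)^\top$, $g = (g_+,g_-)^\top$, one gets
\begin{equation}
\Pi_\delta(\lambda)\, u \ =\  \tfrac{1}{\delta}\, G_\delta(u)\,F_\delta,\qquad  F_\delta \de \sum_\pm \phi_\pm^\star\cdot \UU_\delta f_\pm, \ \  G_\delta(u) \de \sum_\pm \big\langle \UU_\delta g_\pm\, \phi_\pm^\star,u\big\rangle.
\end{equation}
Since $\|\UU_\delta h\|_{L^2}=\|h\|_{L^2}$ and $\phi_\pm^\star$ are bounded (being continuous $1$-periodic), Cauchy--Schwarz immediately gives $|G_\delta(u)|\le C\,|g|_{L^2}\,|u|_{L^2}$. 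Hence
\begin{equation}
\|\tPi_\delta(z)\|_{L^2}\ \le\ \tfrac{C}{\delta}\,|g|_{L^2}\cdot \big|(\PP_\delta - E_\star - z\delta)F_\delta\big|_{L^2},
\end{equation}
and everything reduces to bounding the $L^2$-norm of $(\PP_\delta-E_\star-z\delta)F_\delta$ by $C\delta(1+|z|)|f|_{H^2}$.

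To obtain this bound, I would apply Leibniz to each summand $\phi_\pm^\star\cdot \UU_\delta f_\pm$. Using the key commutation identities $D_x \UU_\delta = \delta\, \UU_\delta D_x$ and $(P_0-E_\star)\phi_\pm^\star=0$, the terms in which both derivatives fall on $\phi_\pm^\star$ cancel, leaving
\begin{equation}
(D_x^2+V-E_\star)\big(\phi_\pm^\star\, \UU_\delta f_\pm\big)\ =\ 2\delta\,(D_x\phi_\pm^\star)\cdot \UU_\delta(D_x f_\pm)\ +\ \delta^2\,\phi_\pm^\star\cdot \UU_\delta(D_x^2 f_\pm).
\end{equation}
The perturbation $\delta\,\kappa_\delta W$ contributes a term bounded in $L^2$ by $C\delta\,|\UU_\delta f_\pm|_{L^2}=C\delta\,|f_\pm|_{L^2}$, while the spectral shift contributes $\|z\delta F_\delta\|_{L^2}\le C|z|\delta\,|f|_{L^2}$. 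Since $D_x\phi_\pm^\star$ and $\phi_\pm^\star$ are bounded and $\|\UU_\delta\,\cdot\,\|_{L^2}=\|\cdot\|_{L^2}$, all four contributions admit a common bound $C\delta(1+|z|)|f|_{H^2}$ for $\delta\in (0,1)$. Dividing by $\delta$ gives the announced estimate.

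I expect no serious obstacle: the statement is essentially a bookkeeping exercise, and the mechanism is the standard WKB-type cancellation where the ansatz $\phi_\pm^\star\cdot \UU_\delta f_\pm$ is a slowly-modulated Bloch mode at the Dirac energy. The only minor point worth care is that all implicit constants must be independent of $\delta\in(0,\delta_0)$ and $z\in \Dd(0,\tvar)$; this is ensured because $\phi_\pm^\star, D_x\phi_\pm^\star, V, W, \kappa$ are all bounded uniformly, and the commutation $D_x^k \UU_\delta = \delta^k \UU_\delta D_x^k$ turns every derivative falling on the slow envelope into an extra power of $\delta$.
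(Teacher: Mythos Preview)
Your proposal is correct and follows essentially the same approach as the paper: both arguments apply $\PP_\delta-E_\star-z\delta$ to the slowly modulated Bloch modes $\phi_\pm^\star\cdot\UU_\delta f_\pm$, use $(P_0-E_\star)\phi_\pm^\star=0$ together with $D_x\UU_\delta=\delta\,\UU_\delta D_x$ to see that the surviving terms carry an extra factor of $\delta$ cancelling the prefactor $\delta^{-1}$, and then estimate the rank-one remainder by $C(1+|z|)|f|_{H^2}|g|_{L^2}$. The only cosmetic difference is that you isolate the rank-one structure as a scalar functional $G_\delta$ times a fixed vector $F_\delta$, whereas the paper keeps the matrix/tensor notation throughout the computation.
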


\begin{proof}  Fix $f, g \in H^2(\R, \C^2)$. Then, using that $(D_x^2+V-E_\star)\phi_\pm^\star=0$ and that $D_x \UU_\delta(f)(x)=\delta \UU_\delta(D_x f)(x)$, we find
\begin{equations}
\big(\PP_\delta-E_\star-z \delta\big) \cdot \dfrac{1}{\delta} \matrice{ \phi_+^\star \\ \phi_-^\star  }^\top \cdot \UU_\delta\ \left( f \otimes g\right)\  \UU_\delta^{-1} \cdot \ove{\matrice{ \phi_+^\star \\ \phi_-^\star  }}\\
  = \dfrac{1}{\delta} \left(\matrice{ (\PP_\delta-E_\star-z \delta)\phi_+^\star \\ (\PP_\delta-E_\star-z \delta) \phi_-^\star  }^\top  \cdot \UU_\delta f  + 2\matrice{D_x \phi_+^\star \\ D_x \phi_-^\star  }^\top \cdot D_x \UU_\delta f  + \matrice{\phi_+^\star \\ \phi_-^\star  }^\top \cdot D_x^2\ \UU_\delta f  \right)\otimes\ g\ \UU_\delta^{-1} \cdot \ove{\matrice{ \phi_+^\star \\ \phi_-^\star  }}\\
  = \left( (\kappa_\delta W  - z) \matrice{ \phi_+^\star \\ \phi_-^\star  }^\top  \cdot \UU_\delta f  + 2\matrice{D_x \phi_+^\star \\ D_x \phi_-^\star  }^\top \cdot  \UU_\delta D_x f + \delta \UU_\delta D_x^2 f  \right)\otimes g\  \UU_\delta^{-1} \cdot \ove{\matrice{ \phi_+^\star \\ \phi_-^\star} }.
\end{equations}
A direct estimate shows that the $L^2$-operator norm of this projector is controlled by  
$C (1+|z|) \cdot |f|_{H^2} \cdot |g|_{L^2}$, 
as claimed. 
\end{proof}

The family $\lambda \in \Dd(E_\star,\tvar \delta) \mapsto K_\delta^\Pi(\lambda)$ is analytic. To adapt the arguments of \S\ref{ev-loc}, we first show that $\left(\Id + K_\delta^\Pi(\lambda)\right)^{-1}$ defines a meromorphic family of operators. Because of \cite[Theorem C.5]{DZ}, it suffices to prove that for $\delta$ sufficiently small, there exists $\lambda_\delta \in \C^+ \cup \Dd(E_\star,\tvar \delta)$ with $\Id+K_\delta^\Pi(\lambda_\delta)$ invertible. We set 
\begin{equation}
\lambda_\delta \de E_\star + i\delta\var',  \ \ \var' \de \dfrac{\tvar+\var_N}{2} \ \ \  (\text{note } \var_N<\var'<|\var_\star|).
\end{equation}

\begin{prop}\label{prop:1n} There exist $\delta_0 > 0$, $\theta_0 > 0$ such that if
\begin{equation}\label{eq:3k}
|f|_{H^2} \cdot |g|_{L^2} \leq \te_0, \ \ \delta \in (0,\delta_0)
\end{equation}
then $\Id+K_\delta^\Pi(\lambda_\delta)$ is invertible.
\end{prop}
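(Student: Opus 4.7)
I factor
\[
\Id + K_\delta^\Pi(\lambda_\delta) \ = \ \bigl(\Id + K_\delta(\lambda_\delta)\bigr) \, \bigl(\Id + (\Id + K_\delta(\lambda_\delta))^{-1} \, \tPi_\delta(i\var') \bigr)
\]
and verify invertibility of each factor. The first factor is invertible because $\PP_\delta$ is self-adjoint: by Proposition~\ref{prop:1m} and analytic Fredholm theory (Lemma~\ref{lem:1k}), the family $\lambda \mapsto (\Id + K_\delta(\lambda))^{-1}$ is meromorphic on $\C^+ \cup \Dd(E_\star, \tvar\delta)$ with poles only at eigenvalues of $\PP_\delta$, which are all real; since $\lambda_\delta = E_\star + i\delta\var' \notin \R$, $(\Id + K_\delta(\lambda_\delta))^{-1}$ exists. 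The second factor is $\Id$ plus a rank-one operator, since $\tPi_\delta(i\var') = \phi_1 \otimes \phi_2$ by construction, so its invertibility reduces to the scalar condition
\[
1 + \bigl\langle \phi_2,\, (\Id + K_\delta(\lambda_\delta))^{-1}\, \phi_1 \bigr\rangle \ \neq \ 0.
\]
By Proposition~\ref{lem:1c}, $|\phi_1|_{L^2}|\phi_2|_{L^2} = \|\tPi_\delta(i\var')\|_{L^2} \leq C(1+\var') |f|_{H^2}|g|_{L^2} \leq C(1+\var')\theta_0$; hence, provided $\|(\Id + K_\delta(\lambda_\delta))^{-1}\|_{L^2} \leq M_0$ uniformly in $\delta \in (0, \delta_0)$, the choice $\theta_0 < (2 M_0 C(1+\var'))^{-1}$ forces $|\blr{\phi_2, (\Id + K_\delta(\lambda_\delta))^{-1}\phi_1}| \leq 1/2$, and the second factor is invertible.

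\textbf{Uniform control on the inverse.} By the decomposition~\eqref{eq:1j}, $\Id + K_\delta(\lambda_\delta) = (\Id + \EE_\delta(i\var'))(\Id + (\Id + \EE_\delta(i\var'))^{-1}\KK_\delta(i\var'))$; since $\EE_\delta(i\var') = \OO_{L^2}(\delta^{1/3})$, the first piece is uniformly invertible. The second piece is a trace-class perturbation of $\Id$, whose Fredholm determinant $\DDD_\delta(i\var')$ converges as $\delta \to 0$ to $D_0(i\var')$ by the argument of Proposition~\ref{prop:1q}. A zero of $D_0$ at $z$ would force $\pm z \in \Sigma_\pp(\Di) \subset \R$, but $\pm i\var' \notin \R$, so $D_0(i\var') \neq 0$; consequently $|\DDD_\delta(i\var')| \geq c > 0$ uniformly for $\delta$ small. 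Combined with a uniform trace-norm bound on $(\Id + \EE_\delta(i\var'))^{-1}\KK_\delta(i\var')$, together with the classical estimate $\|(\Id + A)^{-1}\|_{L^2} \leq \exp(1+\|A\|_\LL)/|\Det(\Id + A)|$ valid for trace-class $A$, this yields the uniform bound $\|(\Id + K_\delta(\lambda_\delta))^{-1}\|_{L^2} \leq M_0$.

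\textbf{Main obstacle.} The technical heart of the proof is the uniform trace-norm bound on $(\Id + \EE_\delta(i\var'))^{-1}\KK_\delta(i\var')$. The prefactor in $\KK_\delta(z)$ (see~\eqref{eq:2p}) involves $(\kappa_\delta^2-1)W$ and $(D_x\kappa)_\delta$ of support $[-L/\delta, L/\delta]$, growing like $\delta^{-1}$, which naively threatens blow-up of the trace norm. The saving grace is the rescaled kernel $\delta K_0(\delta(x-y))$ of the Fourier multiplier $\UU_\delta \bss (\nu_\star^2 D_x^2 + |\var_\star|^2 + \var'^2)^{-1}\UU_\delta^{-1}$, which carries a compensating factor~$\delta$. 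A Hilbert--Schmidt factorization $\KK_\delta = A_\delta B_\delta$ through the positive square root $P^{-1/2}$ (with $P = \nu_\star^2 D_x^2 + |\var_\star|^2 + \var'^2$) yields $\|A_\delta\|_{\text{HS}}^2 \lesssim \|\Omega_\delta\|_{L^2}^2 \cdot \delta \|k_{P^{-1/2}}\|_{L^2}^2 = O(\delta^{-1}) \cdot O(\delta) = O(1)$; a cutoff argument exploiting the exponential decay of $k_{P^{-1/2}}$ handles the non-decaying factor $\ove{\matrice{\phi_+^\star \\ \phi_-^\star}}$ appearing in $B_\delta$ and delivers the required uniform trace-norm bound.
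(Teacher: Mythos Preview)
Your core argument---the determinant/Cramer route in ``Uniform control on the inverse''---is exactly the paper's proof: show $\Det(\Id+\KK_\delta(i\var'))\to D_0(i\var')\neq 0$, invoke the Cramer-type bound with a uniform trace-norm estimate on $\KK_\delta(i\var')$, and then absorb the small perturbations $\EE_\delta(i\var')$ and $\tPi_\delta(i\var')$. The paper treats $\EE_\delta+\tPi_\delta$ together as a small additive perturbation of $\Id+\KK_\delta$; you instead peel off $\tPi_\delta$ via the rank-one factorization $(\Id+K_\delta)(\Id+(\Id+K_\delta)^{-1}\tPi_\delta)$. Both are fine.

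One genuine error, though harmless: your opening claim in ``The plan'' that the poles of $\lambda\mapsto(\Id+K_\delta(\lambda))^{-1}$ occur \emph{only} at eigenvalues of $\PP_\delta$ is false. The correct inclusion is the reverse---eigenvalues of $\PP_\delta$ are among the poles---and the paper explicitly flags the failure of the converse in Remark~\ref{degeneracy}: $Q_\delta(\lambda)$ may have nontrivial kernel, so $(\PP_\delta-\lambda)Q_\delta(\lambda)u=0$ does not force $\lambda\in\Sigma(\PP_\delta)$. Your argument that $\lambda_\delta\notin\R$ therefore does not by itself give invertibility of $\Id+K_\delta(\lambda_\delta)$. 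Fortunately this step is redundant: your ``Uniform control'' paragraph already delivers invertibility (nonvanishing determinant) together with the uniform bound, so just delete the flawed shortcut.

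On the trace-norm bound: your HS-factorization with a cutoff to handle the non-decaying $\Phi^*=\ove{\bigl(\begin{smallmatrix}\phi_+^\star\\ \phi_-^\star\end{smallmatrix}\bigr)}$ is workable but heavier than needed. The paper's one-line justification is best read after conjugating by the unitary $\UU_\delta^{-1}$: since trace norm is unitarily invariant, $\|\KK_\delta(i\var')\|_{\LL}=\|\UU_\delta^{-1}\KK_\delta(i\var')\UU_\delta\|_{\LL}$, and the conjugated operator equals $\Omega_\delta(\cdot/\delta)\cdot\bss R\cdot\Phi^*(\cdot/\delta)$ with $R=(\nu_\star^2 D_x^2+|\var_\star|^2+\var'^2)^{-1}$ now genuinely $\delta$-independent and with the front multiplication supported in the \emph{fixed} interval $[-L,L]$. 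Inserting a fixed smooth cutoff $\psi\equiv 1$ on $[-L,L]$, the factor $\psi\,\bss R\,\Phi^*(\cdot/\delta):L^2\to H^2$ has range supported in $\supp\psi$, so it is trace class (one-dimensional embedding $H^2_{\supp\psi}\hookrightarrow L^2$) with a $\delta$-independent bound; the remaining $L^\infty$-bounded multiplication by $\Omega_\delta(\cdot/\delta)$ preserves this. No growing support, no cutoff gymnastics.
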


\begin{proof}[Proof of Proposition \ref{prop:1n}] 1. Recall that $\KK_\delta(z)$ is given by \eqref{eq:2p}. We first show that $\Id+\KK_\delta(i\var')$ is invertible, with bounds on its inverse. In the proof of 
Proposition \ref{prop:1q} (see \eqref{DzD0z})  we showed that
\begin{equation}
\Det\big(\Id+\KK_\delta(i\var')\big) = D_\delta(i\var') = D_0(i\var')+O\left(\delta^{1/4}\right). 
\end{equation}
In addition, we know that $ D_0(i\var') \neq 0$ since $i\var'$ is not an eigenvalue of $\Di$. Therefore for $\delta$ sufficiently small, $\Det(\Id+\KK_\delta(i\var'))$ is non-zero. This implies that $\Id+\KK_\delta(i\var')$ is invertible. Furthermore, bounds on the inverse are obtained using Cramer's  rule, see e.g. \cite[Lemma 5.1]{Dr2}:
\begin{equation}
\left\| ( \Id+\KK_\delta(i\var') )^{-1} \right\|_{L^2}\  \leq\ 
 \dfrac{ e^{2\|\KK_\delta(i\var')\|_{\LL(L^2)}}}{\big|\Det\big(\Id+\KK_\delta(i\var')\big)\big|}
  \leq C e^{2\|\KK_\delta(i\var')\|_{\LL(L^2)}}.
\end{equation}
The trace-class norm of $\KK_\delta(\lambda_\delta)$ is uniformly bounded because it is the product of a uniformly bounded multiplication operator with a $\delta$-independent Fourier multiplier mapping $L^2$ to $H^2$.

2. By \eqref{eq:1j} we have that 
\begin{equations}
\Id+K_\delta^\Pi(E_\star + z\delta) = \Id+\KK_\delta(z) + \EE_\delta(z) + \tPi_\delta(z),
\end{equations}
where $\tPi_\delta(z)$ was defined in Proposition \ref{lem:1c}.
The operator $\Id+\KK_\delta(i\var')$ is invertible, with uniform bounds on its inverse. Therefore for $\Id+K_\delta^\Pi(\lambda_\delta)$ to be invertible it suffices that $\|\EE_\delta(i\var') + \tPi(i\var)\|_{L^2}$ gets arbitrarily small as $\delta \to 0$.  By \eqref{eq:2p} and Proposition \ref{lem:1c},
\begin{equation}
\left\| \EE_\delta(\lambda_\delta) + \tPi_\delta(i\var') \right\|_{L^2} \leq C\delta^{1/3} + C(1+\var') \cdot  |f|_{{H^2}} \cdot |g|_{{L^2}}.
\end{equation}
Therefore, if $\delta$ and 
$|f|_{{H^2}} \cdot |g|_{{L^2}}$ are sufficiently small then $\Id+K_\delta^\Pi(\lambda_\delta)$ is invertible. This completes the proof of Proposition \ref{prop:1n}. \end{proof}

By Proposition \ref{prop:1n} and analytic Fredholm theory (\S\ref{fred-thy}), we know that whenever \eqref{eq:3k} is satisfied, the eigenvalues of $\PP_\delta$ in the disk $\Dd(E_\star,\tvar \delta)$ must be among the poles of $\left(\Id+K_\delta^\Pi(\lambda)\right)^{-1}$. We now have all the ingredients to prove Theorem \ref{thm:2}.

\begin{proof}[Proof of Theorem \ref{thm:2}] The first point of Theorem \ref{thm:2} was proved in Propopsition \ref{prop:1q}. It remains to eliminate the spurious modes discussed above. We show that for every $j \in [-N,N]$, for some $\epsi > 0$ and $\delta_0 > 0$ and for $\delta \in (0,\delta_0)$, $\PP_\delta$ has at most one eigenvalue in $\big[ E_\star+\delta(\var_j-\epsi),  E_\star+\delta(\var_j+ \epsi) \big]$.

 1. Assume that $\var_j$ is an eigenvalue of $\Di$. We now construct $\Pi_\delta(\lambda)$ of the 
form \eqref{eq:3z} by choosing $f \in \ker(\Di-\var_j)$ with $|f|_{L^2} = 1$.  We next set 
 \begin{equation}
  g \de t \cdot \left(\nu_\star^2 D_x^2 + |\var_\star|^2-\oz^2 \right)^{-1} \chi f.
  \end{equation}
  Above $\chi$ is a function with compact support which takes values in $[0,1]$.  We now fix
\begin{equation}
t \de \dfrac{\te_0 \cdot \left(|\var_\star|^2 - |\tvar|^2 \right)}{2 |f|_{H^2}}.
\label{t-pick}\end{equation}
The $L^2$-operator norm of $\left(\nu_\star^2 D_x^2 + |\var_\star|^2-\oz^2 \right)^{-1}$ is bounded by $\left(|\var_\star|^2-\oz^2 \right)^{-1}$. 
With the choice \eqref{t-pick} of $t$,  the functions $f$ and $g$ satisfy \eqref{eq:3k}. Therefore $\lambda \in \Dd(E_\star,\tvar \delta) \mapsto (\Id+K_\delta^\Pi(\lambda))^{-1}$ forms a meromorphic family.

The eigenvalues of $\PP_\delta$ are among the poles of $\left(\Id+K_\delta^\Pi(\lambda)\right)^{-1}$. In other words, if $\lambda$ is an eigenvalue of $\PP_\delta$ then $\Id+K_\delta^\Pi(\lambda)$ is not invertible. The decomposition \eqref{eq:1j} implies that
 \begin{equation}
 \Id + K_\delta^\Pi(E_\star+\delta z) = \Id + \EE_\delta(z) + \KK_\delta(z) + \tPi_\delta(z)
 \end{equation}
 where $\EE_\delta(\lambda) = \OO_{L^2}(\delta^{1/3})$ and $\KK_\delta(z) + \tPi_\delta(z)$ is a trace-class operator. Hence
\begin{equations}\label{eq:1r}
\lambda=E_\star+z \delta \text{ is an eigenvalue of }  \PP_\delta \ \Rightarrow \  \DDD_\delta^\Pi(z) = 0,
\\
\DDD_\delta^\Pi(z) \de \Det\left(\Id+\big(\Id+\EE_\delta(z)\big)^{-1} \KK^\Pi_\delta(z)\right),\ \ \ \ \KK_\delta^\Pi(z) \de \KK_\delta(z) + \tPi_\delta(z).
\end{equations}

2. The proof of Theorem \ref{thm:2} reduces now to a problem in complex analysis; we must locate the zeros of $\DDD_\delta^\Pi$. Similarly to Step 2 in the proof of Proposition \ref{prop:1q}, we have
\begin{equation}
\DDD^\Pi_\delta(z) = D^\Pi_\delta(z) + O\left(\delta^{1/3}\right), \ \ D^\Pi_\delta(z) = \Det\left(\Id+\KK_\delta^\Pi(z)\right).
\end{equation}

3. Applying the same calculation as in Step 3 of Proposition \ref{prop:1q}, we get
\begin{equation}
D^\Pi_\delta(z)= \Det\left( \Id + (\MM_\delta + X_\delta) \left(\nu_\star^2 D_x^2+|\var_\star|^2- z^2\right)^{-1} \right).
\end{equation}
Above, $\MM_\delta$ is the multiscale matrix-valued highly oscillatory potential \eqref{eq:3l},
  and $X_\delta$ is a multiscale highly oscillatory rank-one operator, associated with $\Pi_\delta(\lambda)$:
\begin{equation}
X_\delta  \de t\cdot   \UU_\delta^{-1} \cdot \ove{\matrice{ \phi_+^\star \\ \phi_-^\star} } \left( (\kappa_\delta W  - z) \matrice{ \phi_+^\star \\ \phi_-^\star  }^\top  \cdot \UU_\delta f  + 2\matrice{D_x \phi_+^\star \\ D_x \phi_-^\star  }^\top \cdot  \UU_\delta D_x f \right)\otimes f \chi.
\end{equation}

4. To anticipate the limiting behavior of $D^\Pi_\delta(z)$ we calculate the weak limit of 
the operator $\MM_\delta + X_\delta$ as $\delta$ tends to zero. The weak limit $\MM_0$ of $\MM_\delta$ is displayed in \eqref{W0-def}. We now study the convergence of $X_\delta$ as $\delta$ tends to $0$. We observe that
\begin{equation}
X_\delta = t \cdot (F_{\delta} + G_{\delta}) \otimes \chi f,
\end{equation}
where $F_{\delta}(y) = F(y/\delta, y)$ and $G_{\delta}(y) = G(y/\delta, y)$. The functions $F$, $G$ are both one-periodic in the first variable and  exponentially decaying in the second:
\begin{equations}
F(x,y) \de \big(\kappa(y) W (x) - z \big) \cdot f(y) \cdot \ove{\matrice{ \phi_+^\star(x) \\ \phi_-^\star(x)} } 
 \matrice{ \phi_+^\star(x) \\ \phi_-^\star(x)  }^\top , \\ 
 G(x,y) \de 2(D_yf)(y) \cdot \ove{\matrice{ \phi_+^\star(x) \\ \phi_-^\star(x)} } \matrice{D_x \phi_+^\star(x) \\ D_x \phi_-^\star(x)  }^\top.
\end{equations}
Using the relations \eqref{w-limits} we have 
\begin{equations}
X_\delta\  \rightharpoonup\ t\cdot \left(\kappa\ \bss - z\right) f\otimes f\chi +\ t\cdot \bst \nu_\star D_y f\otimes f\chi
\\ 
 = t \cdot (\Di-z) \cdot f \otimes f \chi \de X_0.
\end{equations}

Using Lemma \ref{lem:1u} we can obtain quantitative bounds in $H^{1/4}$ for the convergence of $F_{\delta}$ and $G_{\delta}$ to their weak limit as $\delta$ goes to zero -- in the same spirit as Step 4  in the proof of Proposition \ref{prop:1q}. We deduce that $\|X_\delta  - X_0\|_{H^{1/4} \rightarrow H^{-1/4}} = O(\delta^{1/4})$ because of \cite[Lemma 2.1]{Dr4}.

 Thanks to Lemma \ref{lem:1v},  $D_\delta^\Pi(\lambda) = D_0^\Pi(\lambda) + O(\delta)$ with
\begin{equation}\label{eq:2k}
D_0^\Pi(\lambda) \de \Det\left(\Id + \left(\MM_0+X_0\right) \left(\nu_\star^2 D_x^2+\var_\star^2-z^2\right)^{-1}  \right).
\end{equation}

5. Hence $z$ is a zero of $D_0^\Pi(z)$ if and only the operator
\begin{equation}
 \Di^2-z^2 +   t\left(\Di-z\right)\cdot f \otimes f\chi = (\Di-z) \left( \Di+z +  t \cdot f \otimes f \chi \right)
\end{equation}
is not invertible. Therefore, the zeros of $D_0^\Pi(z)$ are exactly the eigenvalues of $\Di$ and of $\Di + t f \otimes f \chi$.

6. With $t$ as in \eqref{t-pick},  we fix $\epsi \in (0,t)$ small enough, so that the only eigenvalue of $\Di$ in the interval $[\var_j-\epsi,\var_j+\epsi]$ is $\var_j$.
\footnote{Here the eigenvalues of $\Di$ are simple -- see Theorem  \ref{thm:5}. This makes the analysis a bit simpler: the parametrix twist is rank-one. That being said, the techniques developed in this section only rely on moving  eigenvalues simultaneously.  They would still work in more general situations where the parametrix cancels poles of \textit{higher} multiplicity. The parametrix twist   $\Pi_\delta(\lambda)$ would simply need to be of higher rank -- equal to the number of spurious modes.} 
The eigenvalues of $\Di + t \cdot f \otimes f$ are given by 
\begin{equation}
\mu_k \de \var_k \text{ if } k \neq j; \ \ \ \ \mu_j = \var_j+t \ \ (\text{when } k=j). 
\end{equation}
In particular, they all lie outside $[\var_j-\epsi,\var_j+\epsi]$. By taking $\chi$ equal to $1$ on a sufficiently large set we can arrange for all the eigenvalues of 
\begin{equation}
\Di + t \cdot f \otimes f \chi
\end{equation}
to also lie outside $[\var_j-\epsi,\var_j+\epsi]$. Hence the only zero of $D_0^\Pi$ in the disc $\ove{\Dd(\var_j,\epsi)}$ is the eigenvalue $\var_j$ of $\Di$. 

Using \eqref{eq:2k}, Hurwitz's theorem implies that for $\delta$ sufficiently small, $D_\delta^\Pi$ has precisely one zero in $\Dd(\var_j, \epsi)$. From \eqref{eq:1r}, this completes the proof of Theorem \ref{thm:2}.
\end{proof}

\appendix

\section{From quasimodes to eigenvalues in gaped selfadjoint problems}\label{sec:B}

\begin{proof}[Proof of Lemma \ref{lem:1r}] If $E$ is an eigenvalue of $T$, the lemma is proved. Otherwise $T-E : \HH \rightarrow \HH$ is invertible; from the assumption, $E$ is neither in the pure point spectrum nor in the essential spectrum. From \eqref{q-mode}, $|(T-E)^{-1}|_{\HH}\ge\epsilon^{-1}$. But because of the spectral theorem,
\begin{equation}
\left\|(T-E)^{-1}\right\|_{\HH} = \dfrac{1}{\dist(E, \Sigma(T))}.
\end{equation}
We deduce $\dist(E,\Sigma(T)) \leq \epsilon$, therefore $E$ is $\epsilon$-close to the spectrum of $T$. Since $0 < \epsilon < \epsilon_0$, and $T$ has no essential spectrum in $[E-\epsilon_0,E+\epsilon_0]$, $T$ must have an eigenvalue $\epsilon$ in $[E-\epsilon,E+\epsilon]$.

Assume now that $T$ has only one eigenvalue -- say $\lambda$ --  in $[E-C\epsilon,E+C\epsilon]$. Let $u$ be a corresponding normalized eigenvector. Define $T'$ formally as $T$, but acting on the space $\HH' = \textrm{span}\{u\}^\perp$. The spectral theorem implies that
\begin{equation}
\left\|(T'-E)^{-1}\right\|_{\HH'}= \frac{1}{\dist(E,\Sigma(T'))}\le \dfrac{1}{C\epsilon}.
\end{equation}
Therefore we have 
\begin{equation}
w\in\HH' \ \Rightarrow \
C\epsilon \cdot |w|_{\HH'} \leq |(T'-E)w|_{\HH'}.
\label{lb-w}\end{equation}
If $v$ satisfies \eqref{q-mode}, write  $v = \az u + w$ where $w \in \HH'$ and $\az \in \C$. Then,
\begin{equation}
(T-E) v = \az (T-E)u + (T-E)w = \az (\lambda-E)u + (T'-E)w.
\end{equation}
This is because $Tu = \lambda u$ and $w\in\HH'$, hence $Tw = T' w$. The RHS is an orthogonal decomposition of $(T-E) v$. 
Thus $|(T-E)v|_{\HH} \geq |(T'-E)w|_{\HH}=|(T'-E)w|_{\HH'}$. From \eqref{q-mode} the lower bound \eqref{lb-w} and the relation $w=v-\alpha u$, we deduce
\begin{equation}
\epsilon \geq |(T-E)v|_{\HH} \geq |(T'-E)w|_{\HH'} \geq C\epsilon |w|_{\HH'}=C\epsilon |v-\az u|_{\HH'}.
\end{equation}
Thus, $|v-\az u|_{\HH}  \leq  C^{-1}$; since  $\az u$ is an eigenvector of $T$, the proof is complete. \end{proof}

\section{Spectral estimates for highly oscillatory operators}\label{sec:A}

\begin{lem}\label{lem:1u} Let $F \in C^\infty(\R \times \R)$ be $1$-periodic in the first variable and such that there exists $\te > 0$ with
\begin{equation}
\textrm{for all}\quad  \az\ge0, \ \ \sup_{(x,y) \in \R^2} e^{\te |y|} \cdot \left| \p_y^\az F(x,y) \right| < \infty.
\end{equation}
Set $f_\delta(y) = F(y/\delta,y)$. Then there exists $C > 0$ such that for $\delta \in (0,1)$,
\begin{equation}
|f_\delta - g|_{H^{-1/4}} \leq C\delta^{1/4}, \ \ \ \ g(y) = \int_0^1 F(x,y) dx.
\end{equation}
\end{lem}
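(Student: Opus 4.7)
The plan is to expand $F$ in a Fourier series in its periodic first variable, extract the zero mode to obtain $f_\delta - g$, and estimate the resulting high-frequency wave packets in $H^{-1/4}$ using the Fourier transform.

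Write $F(x,y) = \sum_{k \in \Z} c_k(y) e^{2\pi i k x}$ with $c_k(y) = \int_0^1 F(x,y) e^{-2\pi i k x} dx$. Since $c_0 = g$, this yields
\begin{equation}
f_\delta(y) - g(y) = \sum_{k \neq 0} c_k(y)\, e^{2\pi i k y/\delta}.
\end{equation}
The coefficients $c_k$ inherit all the good features of $F$: the exponential decay hypothesis gives $|\p_y^\az c_k(y)| \leq C_\az e^{-\te|y|}$ for every $\az$, while iterated integration by parts in $x$ on the formula defining $c_k$ (using smoothness of $F$ in $x$) upgrades this to the statement that for every $N$ and $\az$ there exists $C_{N,\az}$ with $|\p_y^\az c_k(y)| \leq C_{N,\az} \lr{k}^{-N} e^{-\te|y|}$. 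In particular each $c_k$ is Schwartz in $y$, uniformly in $k$ up to the decay factor $\lr{k}^{-N}$, and so is its Fourier transform $\hat c_k$.

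Next I would bound each summand $u_k(y) \de c_k(y) e^{2\pi i k y/\delta}$ in $H^{-1/4}$. Its Fourier transform is $\hat u_k(\xi) = \hat c_k(\xi - 2\pi k/\delta)$, so
\begin{equation}
\|u_k\|_{H^{-1/4}}^2 = \int_\R |\hat c_k(\xi - 2\pi k/\delta)|^2 \lr{\xi}^{-1/2} d\xi.
\end{equation}
Split the integration into the regions $R_1 = \{|\xi| \leq \pi|k|/\delta\}$ and $R_2 = \{|\xi| \geq \pi|k|/\delta\}$. On $R_1$ one has $|\xi - 2\pi k/\delta| \geq \pi|k|/\delta$, so the Schwartz decay of $\hat c_k$ makes the integrand super-polynomially small in $|k|/\delta$, and the $R_1$-integral is negligible. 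On $R_2$, $\lr{\xi}^{-1/2} \leq C(\delta/|k|)^{1/2}$, so
\begin{equation}
\int_{R_2} |\hat c_k(\xi - 2\pi k/\delta)|^2 \lr{\xi}^{-1/2} d\xi \leq C \left(\dfrac{\delta}{|k|}\right)^{1/2} \|\hat c_k\|_{L^2}^2 \leq C_N \lr{k}^{-2N} \left(\dfrac{\delta}{|k|}\right)^{1/2}.
\end{equation}
Taking square roots gives $\|u_k\|_{H^{-1/4}} \leq C_N \lr{k}^{-N-1/4}\, \delta^{1/4}$.

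Finally, summing the triangle inequality over $k \neq 0$ with, say, $N = 1$,
\begin{equation}
\|f_\delta - g\|_{H^{-1/4}} \leq \sum_{k \neq 0} \|u_k\|_{H^{-1/4}} \leq C \delta^{1/4} \sum_{k \neq 0} \lr{k}^{-5/4} \leq C' \delta^{1/4}.
\end{equation}
The only delicate step is the splitting argument that turns the $\lr{\xi}^{-1/2}$ weight into a factor $(\delta/|k|)^{1/2}$; everything else is bookkeeping on Fourier coefficients. The exponential decay in $y$ is used merely to ensure each $c_k$ lies in $L^2$ (Schwartz, in fact) with uniform constants, so the polynomial decay $e^{-\te|y|}$ could be weakened to any integrable profile without affecting the conclusion.
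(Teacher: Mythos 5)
Your argument is correct and follows essentially the same route as the paper: Fourier-decompose $F$ in the periodic variable, bound each modulated coefficient $c_k(y)e^{2\pi i k y/\delta}$ in $H^{-1/4}$ by a factor $(\delta/|k|)^{1/4}$, then sum in $k$. The only cosmetic difference is that you extract the $(\delta/|k|)^{1/4}$ gain by splitting the frequency line into near/far regions and using Schwartz decay of $\hat c_k$ on the near region, whereas the paper gets the same bound in one stroke from Peetre's inequality $\sup_\zeta\lr{\zeta}^{-1/4}\lr{\zeta-m/\delta}^{-1/4}\leq\lr{m/\delta}^{-1/4}$ and then controls $|a_m|_{H^{1/4}}$ rather than $\|\hat c_k\|_{L^2}$.
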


\begin{proof} This is mostly contained in the proof of \cite[Theorem 1]{Dr2}. Write a Fourier decomposition of $F$:
\begin{equation}
F(x,y) = \sum_{m \in 2\pi\Z} a_m(y) e^{imx}, \ \ \ \ a_m(y) = \int_0^1 F(x,y) e^{-imx} dx.
\end{equation}
Then,
\begin{equations}
\left|f_\delta - g\right|_{H^{-{1/4}}} = \left|\blr{D}^{-{1/4}}(f_\delta-g)\right|_{L^2} \leq  \sum_{m \in 2\pi\Z \setminus \{0\} } \left|\blr{\ \cdot \ }^{-{1/4}} \widehat{a_m}(\ \cdot-m/\delta)\right|_{L^2} \\
=  \sum_{m \in 2\pi\Z \setminus \{0\}} \left|\blr{\ \cdot \ }^{-{1/4}} \blr{\ \cdot-  m/\delta}^{-{1/4}} \blr{\ \cdot-m/\delta}^{{1/4}} \widehat{a_m}(\ \cdot-m/\delta)\right|_{L^2} \\
\leq \sum_{m \in 2\pi\Z \setminus \{0\}} \left( \sup_{\zeta \in \R} \blr{\zeta}^{-{1/4}} \blr{\zeta-m/\delta}^{-{1/4}} \right) \cdot \left|\blr{\ \cdot-m/\delta}^{{1/4}} \widehat{a_m}(\ \cdot-m/\delta)\right|_{L^2}.
\end{equations}
We now apply Peetre's inequality to the first factor and deduce that
\begin{equations}
\left|f_\delta - g\right|_{H^{-{1/4}}} \leq\sum_{m \in 2\pi\Z \setminus \{0\}} \blr{m/\delta}^{-{1/4}} \cdot   \left|\blr{\ \cdot \ }^{{1/4}} \widehat{a_m}\right|_{L^2} \leq C\delta^{1/4} \sum_{m \in 2\pi\Z \setminus \{0\}} \dfrac{|a_m|_{H^{1/4}}}{m^{1/4}}.
\end{equations}
Because $F \in C^\infty(\R \times \R)$ decays sufficiently fast, the sum on the RHS is absolutely convergent. This completes the proof.\end{proof}

\begin{proof}[Proof of Lemma \ref{lem:1v}] 1. We first use the cyclicity of the determinant to write
\begin{equation}
\Det(\Id + A \chi B) = \Det\left(\Id + \blr{D}^{-s} A \blr{D}^{-s} \cdot \blr{D}^s \chi B \blr{D}^{s}\right).
\end{equation}
The trace-class norm controls the determinant difference -- see e.g. \cite[Proposition B.26]{DZ}. Hence, 
\begin{equations}
\big| \Det(\Id+A \chi B) - \Det(\Id+A'\chi B) \big| \leq C \left\| \blr{D}^{-s} (A-A') \blr{D}^{-s} \cdot \blr{D}^s \chi B \blr{D}^{s} \right\|_{\LL(L^2)}
\\
\leq C \left\| \blr{D}^{-s} (A-A') \blr{D}^{-s} \right\|_{L^2} \cdot  \left\| \blr{D}^s \chi B \blr{D}^{s} \right\|_{\LL(L^2)}.  
\end{equations}
The space of bounded operators from $L^2$ to $H^{2-2s}$ with range in function with fixed compact support embeds continuously in the space of trace-class operators on $L^2$, because $2-2s > 1$ -- see e.g. \cite[Equation (B.3.9)]{DZ}. Therefore, 
\begin{equations}
\big| \Det(\Id+A \chi B) - \Det(\Id+A'\chi B) \big| \leq C \left\| \blr{D}^{-s} (A-A') \blr{D}^{-s} \right\|_{L^2} \cdot  \left\| \chi B \right\|_{H^{-s} \rightarrow H^{2-s}}.
\end{equations}
This completes the proof of Lemma \ref{lem:1v}. \end{proof}

\section{The point spectrum of $\Di$ is simple}\label{app:3}

\begin{theorem}\label{thm:5} The eigenvalues of $\Di$ are simple.
\end{theorem}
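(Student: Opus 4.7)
The plan is to view the eigenvalue equation $(\Di - \lambda)\psi = 0$ as a system of ordinary differential equations and analyze the dimension of the decaying subspace at $+\infty$ (or equivalently at $-\infty$). Since the equation is a $2\times 2$ first order ODE system, the total space of solutions on $\R$ is $2$-dimensional. I will show that the requirement of square-integrability near $+\infty$ cuts this space down to a single complex line, so the $L^2$-eigenspace has dimension at most $1$.

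First, I would recast the eigenvalue equation in ODE form. Writing $\psi = (\psi_1,\psi_2)^\top$ and using $D_x=\frac{1}{i}\partial_x$ together with $\bst^{-1}=\bst$, the equation $(\Di-\lambda)\psi=0$ is equivalent to
\begin{equation}
\partial_x \psi \ =\ \frac{i}{\nu_\star}\,\bst\bigl(\lambda - \bss \kappa(x)\bigr)\psi.
\end{equation}
Since the coefficient matrix is bounded, the Picard--Lindel\"of theorem yields a $2$-dimensional space of global solutions in $C^1(\R,\C^2)$, and the evaluation $\psi\mapsto \psi(L)$ is a linear isomorphism onto $\C^2$.

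Next I would analyze the behavior for $x\ge L$, where $\kappa(x)\equiv 1$ by \eqref{eq:8g}. There the ODE becomes the constant-coefficient system $\partial_x\psi = \frac{i}{\nu_\star}\bst(\lambda-\bss)\psi$. A direct computation gives
\begin{equation}
\bst(\lambda-\bss) \ =\ \matrice{\lambda & -\ove{\var_\star} \\ \var_\star & -\lambda},
\end{equation}
a traceless matrix with determinant $|\var_\star|^2-\lambda^2$. For $\lambda\in(-|\var_\star|,|\var_\star|)$ (which contains all the eigenvalues by Theorem~\ref{lem:1b}), this determinant is strictly positive, so $\bst(\lambda-\bss)$ has eigenvalues $\pm i\sqrt{|\var_\star|^2-\lambda^2}$ and the coefficient matrix $\frac{i}{\nu_\star}\bst(\lambda-\bss)$ has real eigenvalues $\pm r$ with $r=|\nu_\star|^{-1}\sqrt{|\var_\star|^2-\lambda^2}>0$. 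Let $v_\pm\in\C^2$ be corresponding eigenvectors. Then every global solution satisfies, for $x\ge L$,
\begin{equation}
\psi(x) \ =\ c_+ e^{rx} v_+ + c_- e^{-rx} v_-,
\end{equation}
and the $L^2$-integrability on $[L,\infty)$ forces $c_+=0$, i.e.\ $\psi(L)\in\C v_-$. This is a one-dimensional condition on the two-dimensional solution space, so the space of solutions which are $L^2$ near $+\infty$ is at most one-dimensional. In particular the eigenspace $\ker(\Di-\lambda)\subset L^2$ has dimension at most $1$, which together with Theorem~\ref{lem:1b} gives the claim.

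There is no real obstacle here: the argument is essentially an ODE shooting observation and uses only that $\kappa$ becomes constant outside a compact set together with the simple fact that the asymptotic matrix $\bst(\lambda-\bss)$ has no purely real eigenvalue for $|\lambda|<|\var_\star|$. (The same reasoning would also work under the weaker hypotheses of the footnote, using instead the standard stable-manifold / Levinson asymptotic theorem to produce a one-dimensional decaying subspace at $+\infty$.)
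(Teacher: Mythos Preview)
Your argument is correct and considerably more direct than the paper's. You exploit the fact that $(\Di-\lambda)\psi=0$ is a first-order $2\times 2$ ODE, so the full solution space is two-dimensional, and then use the hyperbolicity of the asymptotic coefficient matrix $\frac{i}{\nu_\star}\bst(\lambda-\bss)$ at $+\infty$ to see that only a one-dimensional subspace of solutions is square-integrable there. Since $\Di$ is self-adjoint, geometric simplicity is all that is needed.

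The paper instead develops the Evans-function machinery of Pego--Weinstein: it constructs Jost-type solutions $\zeta^\pm$ and adjoint solutions $\eta^\pm$, defines $D(z)=\eta^-\cdot\zeta^+$ and $\tD(z)=\eta^+\cdot\zeta^-$, writes down the resolvent kernel $H(x,y,z)$ explicitly in terms of these, and then computes the eigenprojector $\Pi_{z_0}$ as a contour integral of the resolvent. Simplicity is obtained by showing that $D'(z_0)\ne 0$ and that the residue kernel $\Pi_{z_0}(x,y)$ factors as $\overline{\Psi_0(y)}^\top\Psi_0(x)$, hence has rank one. This route yields more as a by-product: an explicit resolvent formula, the identification of eigenvalues with simple zeros of an analytic function, and a formula for $D'(z_0)$ in terms of $|\zeta^+|_{L^2}^2$---objects that are useful if one later wants perturbation formulas or to track eigenvalues analytically. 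Your shooting argument gives none of that extra structure, but for the bare statement of Theorem~\ref{thm:5} it is the cleaner proof.
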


 \subsection{Eigenvalues as of $\Di$ as zeros of Evans' function}\label{Evans-fn} Recall that
\begin{equation}
\Di = \nu_\star \bst D_x + \bss \kappa, \ \ \ \ \bss  = \matrice{0 & \ove{\var_\star} \ \\ \var_\star & 0}.
\end{equation}
Without loss of generality, we assume that $\nu_\star = 1$ (this only amounts to a change of variables $x \mapsto \nu_\star x$). We first rewrite the equation $(\Di - z) \az = 0$ as
\begin{equation}
D_x \az  = A(\cdot,z) \cdot \az, \ \ \ \ A(x,z) \de \bss \bst\kappa(x) + z \bst.
\label{direct}
\end{equation}
We used above that $\bst\bss=-\bss\bst$. 
We use the framework  in \cite{PW:92}. We introduce the adjoint system
\begin{equation}\label{adjoint}
D_x\beta = -\beta \cdot A(\cdot,z).
\end{equation}
We observe that if $\az$ solves \eqref{direct} and $\beta$ solves \eqref{adjoint}, then $D_x(\beta \cdot \az) = 0$ -- where $\beta \cdot \az$ is the scalar obtained by matrix multiplication. In particular, the quantity $\beta(x) \cdot \az(x)$ does not depend on $x$.

We are interested in solutions of \eqref{direct} and \eqref{adjoint} determined by the eigenvectors  of the asymptotic matrices:
\begin{equation}
 A_\pm(z) \de \lim_{x \to \pm\infty}A(x,z) = \matrice{ z & \mp \ove{\var_\star} \ \\ \pm \var_\star  & -z}.
\end{equation}
Introduce 
\begin{equation}
\mu(z) \de i\sqrt{|\var_\star|^2-z^2}, \ \ \ \ z \in \Omega \de \C \setminus \big( (-\infty,-|\var_\star|\ ]\cup[\ |\var_\star|,+\infty) \big).
\end{equation}
The complex square root on $\C \setminus (-\infty,0]$ has range $\{ z : \Re(z )> 0\}$. The matrices  $A_\pm(z)$ have distinct eigenvalues, given by $\pm\mu(z)$. Therefore, they admit left-and-right   eigenvectors $f^\pm(z)$ and $g^\pm(z)$ depending analytically on $z\in\Omega$: 
\begin{equations}
\left( A_+(z) - \mu(z) \right) \cdot f^+(z) = 0, \ \ \ \ g^-(z)\cdot \left( A_-(z) - \mu(z) \right) = 0, \\
\left( A_-(z) + \mu(z) \right) \cdot f^-(z) = 0, \ \ \ \ g^+(z)\cdot \left( A_+(z) + \mu(z) \right) = 0.
\end{equations}
We normalize these eigenvectors so that $g^\pm(z) f^\pm(z)=1$.

We focus on $z \in (-|\var_\star|,|\var_\star|)$, where $\Di$ may have point spectrum. For such $z$, $i\mu(z) \in (-\infty,0)$. We now introduce two pairs of solutions of \eqref{direct} and \eqref{adjoint}: 
\begin{itemize}
\item The system \eqref{direct} has a one-dimensional subspace of solutions which decays as $x$ tends to $+\infty$. We can choose such a solution $\zeta^+(x,z)$  which satisfies
 \begin{equation}
 \zeta^+(x,z) =  f^+(z) e^{i\mu(z) x} \quad \textrm{as}\ \ x \gg 1.
\end{equation}
 \item The adjoint system  \eqref{adjoint} has a  one-dimensional subspace of solutions
 which decays as $x\to-\infty$. We can choose such a solution $\eta^-(x,z)$  which satisfies
 \begin{equation}
 \eta^-(x,z) = g^-(z) e^{-i\mu(z) x} \quad \textrm{as}\ \ x \ll -1.
 \end{equation}
 \item The system \eqref{direct} has a one-dimensional subspace of solutions which decays as $x$ tends to $-\infty$. We can choose such a solution $\zeta^-(x,z)$  which satisfies
\begin{equation}
\zeta^-(x,z) = f^-(z) e^{-i\mu(z) x} \quad \textrm{as}\ \ x \ll -1.
\end{equation}
\item The adjoint system  \eqref{adjoint} has a  one-dimensional subspace of solutions
 which decay as $x\to+\infty$. We can choose such a solution $\eta^+(x,z)$  which satisfies
 \begin{equation}
 \eta^+(x,z) =  g^+(z) e^{i\mu(z) x} \quad \textrm{as}\ \ x \gg 1.
\end{equation}
\end{itemize}
These all have analytic continuations to $\Omega$ because the eigenvalues of $A_\pm(z)$ are simple.
 
\begin{definition}\label{evans}
For $z\in \Omega$, we define Evans' functions
   \begin{equation}
   D(z) \de \eta^-(x,z) \cdot \zeta^+(x,z),\ \ \ \ 
    \tD(z) \de \eta^+(x,z)\cdot \zeta^-(x,z).
\end{equation}
(The above expressions are independent of $x$ because of the conservation law).
 \end{definition}

We have the following result:

\begin{theorem}\cite[Theorem 1.9]{PW:92}  \label{thm:4}
\begin{enumerate}
\item There exists an analytic function $\psi$ on $\Omega$ such that $\tD(z) = \psi(z) \cdot  D(z)$.
\item Let $z_0\in(-|\var_\star|,|\var_\star|)$. The solution $\zeta^+(x,z_0)$ of  \eqref{direct}  is exponentially decaying as $|x|\to \infty$ 
  if and only if $D(z_0)=0$ (and $ \tD(z_0)=0)$. When this is satisfied,  there exists $C_\zeta \neq 0$ with
   $\zeta^-(x,z_0)=C_\zeta\cdot \zeta^+(x,z_0)$. 
\item Let $z_0\in(-|\var_\star|,|\var_\star|)$. The solution $\eta^-(x,z_0)$ of  \eqref{adjoint}  is exponentially decaying as $|x|\to \infty$ 
  if and only if $ \tD(z_0)=0$ (and $D(z_0)=0)$. When this is satisfied,  there exists $C_\eta \neq 0$ with
   $\eta^+(x,z_0)=C_\eta \cdot \eta^-(x,z_0)$. \end{enumerate}
\end{theorem}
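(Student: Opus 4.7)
The plan is to exploit the fundamental matrix $M(x,z) := [\zeta^+(x,z),\zeta^-(x,z)]$ of the direct system \eqref{direct}, whose columns are the two distinguished solutions. Since $\operatorname{tr} A(x,z) = 0$ (both $\bss\bst$ and $\bst$ are trace-free), Liouville's formula shows that the Wronskian $w(z) := \det M(x,z)$ is independent of $x$. Differentiating $M M^{-1} = \operatorname{Id}$ yields $D_x M^{-1} = -M^{-1} A$, so the two rows of $M^{-1}$ are solutions of the adjoint system \eqref{adjoint}. In dimension two the cofactor formula makes these rows explicit: the first is $w(z)^{-1}(\zeta^-_2,\, -\zeta^-_1)$ and the second is $w(z)^{-1}(-\zeta^+_2,\, \zeta^+_1)$.

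To prove Part $(1)$, I will match asymptotics. A short algebraic computation shows: if $f$ is a right eigenvector of a trace-free $2\times 2$ matrix $A$ with eigenvalue $\lambda$, then the row $(f_2, -f_1)$ is a left eigenvector of $A$ with eigenvalue $-\lambda$. Applied to $f = f^-(z)$, this identifies $(f^-_2, -f^-_1)$ as a left eigenvector of $A_-(z)$ with eigenvalue $+\mu(z)$, hence as a scalar multiple $c_-(z)\,g^-(z)$ of $g^-$. Matching the $x \to -\infty$ behavior of the first row of $M^{-1}$ with that of $\eta^-$ then gives that the first row of $M^{-1}(x,z)$ equals $(c_-(z)/w(z))\,\eta^-(x,z)$. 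The $(1,1)$-entry of $M^{-1} M = \operatorname{Id}$ now reads
\begin{equation}
\dfrac{c_-(z)}{w(z)}\,\eta^-(x)\cdot\zeta^+(x) = 1,\qquad \text{i.e.}\qquad D(z) = \dfrac{w(z)}{c_-(z)}.
\end{equation}
The symmetric argument at $+\infty$, using $(-f^+_2, f^+_1) = c_+(z)\,g^+(z)$, yields $\tD(z) = w(z)/c_+(z)$. Therefore $\psi(z) := \tD(z)/D(z) = c_-(z)/c_+(z)$ is analytic and nowhere vanishing on $\Omega$, since the scalars $c_\pm(z)$ are analytic (inherited from the analytic choice of $f^\pm, g^\pm$ on $\Omega$) and nonzero.

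Parts $(2)$ and $(3)$ then follow cleanly. Because $c_-(z)$ never vanishes, the identity $D(z) = w(z)/c_-(z)$ gives $D(z_0) = 0$ iff $w(z_0) = 0$ iff $\zeta^+(\cdot, z_0)$ and $\zeta^-(\cdot, z_0)$ are linearly dependent. Neither is identically zero, so dependence forces $\zeta^-(x, z_0) = C_\zeta\,\zeta^+(x, z_0)$ with $C_\zeta \ne 0$; the common function inherits exponential decay at $+\infty$ (from $\zeta^+$) and at $-\infty$ (from $\zeta^-$). Conversely, any solution of \eqref{direct} that decays exponentially at $\pm\infty$ must lie in the one-dimensional decaying subspaces at each end, hence coincide with both $\zeta^\pm(\cdot, z_0)$ up to scalars. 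Part $(1)$ then gives $\tD(z_0) = 0$ as well. For Part $(3)$ I will replay the same program on the adjoint fundamental matrix whose rows are $\eta^-$ and $\eta^+$: the analogous Wronskian $\widetilde w(z)$ vanishes precisely when $\eta^-$ and $\eta^+$ become proportional (i.e.\ when $\eta^-$ extends to a solution of \eqref{adjoint} decaying at both infinities), and the cofactor argument yields a formula $\tD(z) = \widetilde w(z)/\widetilde c_-(z)$ with $\widetilde c_-$ analytic and nonvanishing.

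The main obstacle is the careful bookkeeping of eigenvalue signs in the asymptotic matching — verifying that $(f^-_2, -f^-_1)$ really aligns with $g^-$ and not with some other eigenvector, and performing the analogous check at $+\infty$ — together with ensuring that the scalars $c_\pm(z)$ and $\widetilde c_\pm(z)$ are simultaneously analytic and nonvanishing throughout $\Omega$. Once those identifications are in place, the entire theorem collapses to the single Wronskian formula $D(z) = w(z)/c_-(z)$, which packages both the ratio $\psi$ of Part $(1)$ and the eigenvalue characterizations of Parts $(2)$ and $(3)$.
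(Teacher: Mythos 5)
The paper does not actually prove Theorem~\ref{thm:4} --- it is cited verbatim from Pego--Weinstein \cite[Theorem~1.9]{PW:92} --- so there is no in-paper argument to compare against; what you have supplied is a self-contained proof that the authors chose to outsource. Your argument is correct. The engine is the $2\times 2$ cofactor identity: since $A(x,z)$ is trace-free, the Wronskian $w=\det[\zeta^+,\zeta^-]$ is $x$-independent, the rows of $M^{-1}$ solve the adjoint system, and the algebraic fact ``$f$ is a right eigenvector of a trace-free $2\times 2$ matrix with eigenvalue $\lambda$ $\Rightarrow$ $(f_2,-f_1)$ is a left eigenvector with eigenvalue $-\lambda$'' pins down the asymptotics of those rows against $\eta^\pm$. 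I verified this identity and the sign bookkeeping at both ends; your matching of the first row of $M^{-1}$ with $(c_-/w)\eta^-$ and the second with $(c_+/w)\eta^+$ holds, and the $(1,1)$ and $(2,2)$ entries of $M^{-1}M=\Id$ do give $D=w/c_-$ and $\tD=w/c_+$. The scalars $c_\pm$ are analytic and nonvanishing (they cannot vanish without forcing $f^\pm=0$), so $\psi=c_-/c_+$ is analytic \emph{and nonvanishing} on $\Omega$ --- strictly more than part~$(1)$ asserts, and exactly what is needed for the two-way implications in parts $(2)$ and $(3)$. Parts $(2)$ and $(3)$ then reduce to ``$w=0$ iff the two columns (resp.\ two rows) become proportional,'' which you handle correctly; in part~$(3)$ one can even bypass your ``replay'' by noting that $\tilde w=\det\begin{pmatrix}\eta^-\\ \eta^+\end{pmatrix}$ satisfies $\tilde w = w/(c_-c_+)$, since $M^{-1}=\mathrm{diag}(c_-/w,\ c_+/w)\cdot N$, so $\tilde w$ and $w$ vanish together. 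One caveat worth stating explicitly in a written-up version: the identity $D(z)=w(z)/c_-(z)$ is first established at points where $w(z)\neq 0$ (where $M^{-1}$ exists) and then extended to all of $\Omega$ by analytic continuation of both sides. Note also that your argument nowhere uses the paper's stated normalization $g^\pm f^\pm=1$, which is good, because as written that normalization cannot hold ($g^+$ and $f^+$ are left and right eigenvectors of $A_+$ for the \emph{distinct} eigenvalues $-\mu$ and $\mu$, hence orthogonal); this is a typo in the paper's setup and your proof is robust to it. Compared with Pego--Weinstein, whose Theorem~1.9 is stated and proved for general $n\times n$ systems using a compound-matrix / exterior-algebra formalism, your proof is a genuine simplification specific to $n=2$: the cofactor formula does in one line what the wedge-product construction does in general, and the trace-free hypothesis, automatic here since $A = \kappa\,\bss\bst + z\bst$, makes the Wronskian constant without any exponential weight.
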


\begin{cor}\label{pt-spec}
A point $z_0 \in (-|\var_\star|, |\var_
\star|)$ is an eigenvalue of $\Di$ if and only if 
$D(z_0)=\tD(z_0)=0$.
\end{cor}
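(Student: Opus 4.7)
\textbf{Proof plan for Corollary~\ref{pt-spec}.} The plan is to read the corollary directly off Theorem~\ref{thm:4} by identifying $L^2$-eigenvectors of $\Di$ with solutions of the first-order system \eqref{direct} that decay at both ends of $\R$. The key preliminary observation, which I would state once at the beginning, is the standard exponential dichotomy: for $z_0 \in (-|\var_\star|,|\var_\star|)$ the matrices $A_\pm(z_0)$ have eigenvalues $\pm \mu(z_0)$ with $\Re(i\mu(z_0)) = -\sqrt{|\var_\star|^2 - z_0^2} \neq 0$, so every solution of $D_x\alpha = A(\cdot,z_0)\alpha$ that merely lies in $L^2$ near $+\infty$ (resp. $-\infty$) must in fact decay exponentially there, and is a scalar multiple of $\zeta^+(\cdot,z_0)$ (resp. $\zeta^-(\cdot,z_0)$).

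For the forward implication, I would take $z_0 \in (-|\var_\star|,|\var_\star|)$ an eigenvalue of $\Di$ and a nonzero $\alpha \in L^2$ with $(\Di - z_0)\alpha = 0$. Rewriting as $D_x\alpha = A(\cdot,z_0)\alpha$ and applying the dichotomy at $+\infty$ yields $\alpha = c\,\zeta^+(\cdot,z_0)$ for some $c \neq 0$; since $\alpha \in L^2$ also decays at $-\infty$, the function $\zeta^+(\cdot,z_0)$ is exponentially decaying as $|x|\to\infty$. Theorem~\ref{thm:4}(2) then gives $D(z_0) = 0$, and part~(1) gives $\tD(z_0) = \psi(z_0) D(z_0) = 0$.

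For the converse, assuming $D(z_0) = \tD(z_0) = 0$, Theorem~\ref{thm:4}(2) tells us that $\zeta^+(\cdot,z_0)$ is exponentially decaying as $|x|\to\infty$. It is smooth, nonzero, and by construction satisfies $D_x\zeta^+ = A(\cdot,z_0)\zeta^+$, which is equivalent (multiplying by $\bst$ and using $\bst\bss = -\bss\bst$) to $(\Di - z_0)\zeta^+ = 0$. The exponential decay forces $\zeta^+(\cdot,z_0) \in L^2$, so $z_0 \in \Sigma_\pp(\Di)$.

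I expect no real obstacle: the only non-routine point is invoking the exponential dichotomy to upgrade $L^2$-decay to exponential decay, which is a standard linear-ODE fact that follows from constant-coefficient approximation near $\pm\infty$ and is already implicit in the construction of $\zeta^\pm, \eta^\pm$ in \S\ref{Evans-fn}. The proof is essentially a bookkeeping exercise that packages Theorem~\ref{thm:4} into the eigenvalue criterion used in the proof of Theorem~\ref{lem:1b}.
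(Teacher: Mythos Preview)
Your proposal is correct and follows exactly the approach the paper intends: the corollary is stated immediately after Theorem~\ref{thm:4} with no separate proof, precisely because it is the direct translation you give---an $L^2$-eigenvector of $\Di$ is, via the exponential dichotomy at $\pm\infty$, a multiple of $\zeta^+(\cdot,z_0)$ that decays at both ends, and Theorem~\ref{thm:4}(2) identifies this with $D(z_0)=0$ (hence also $\tD(z_0)=0$). There is nothing to add.
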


\subsection{Resolvent kernel; projector; and proof of Theorem \ref{thm:5}}\label{resolvent}

We prove a formula for the resolvent of $\Di$ using $\zeta^\pm$ and $\eta^\pm$. We study the equation $\left(\Di -z\right)\az = \vp$, where $\vp \in C^\infty_0(\R,\C^2)$ and $z \in (-|\var_\star|, |\var_\star|)$. This is equivalent to
   \begin{equation}
   D_x \az(x)= A(x,z)\cdot \az(x)+ \Phi(x), \ \ \ \  \Phi(x) 
   \de \bst \vp(x).
   \label{inhom}
   \end{equation}
   We proceed by variation of constants. We seek $\az$ in the form $\az(x)=a(x) \cdot \zeta^+(x,z)+b(x) \cdot \zeta^-(x,z)$, where 
   $\zeta^\pm$ were introduced in \S\ref{Evans-fn}. Substituting into \eqref{inhom} we find that $a$ and $b$ must satisfy
   \begin{equation}
   D_x a(x) \cdot \zeta^+(x,z) + D_x b(x) \cdot \zeta^-(x,z) = \Phi(x).
   \label{apbp}
   \end{equation}
 
From Definition \ref{evans}, $\eta^-(x,z) \cdot \zeta^+(x,z)=D(z)$ and $\eta^+(x,z) \cdot \zeta^-(x,z)= \tD(z)$. Moreover, 
    we have  $\eta^-\cdot \zeta^- \equiv 0$ and $\eta^+ \cdot \zeta^+ \equiv 0$ (by evaluation as $x\to \pm \infty$ and use of the conservation law).
    Therefore, left multiplication of \eqref{apbp} by $\eta^-(\cdot,z)$ gives $D(z) D_x a(x)=\eta^-(x,z) \Phi(x)$ and 
     left multiplication of \eqref{apbp} by $\eta^+(\cdot,z)$ gives $ \tD(z) D_x b(x)=\eta^+(x,z) \Phi(x)$. The identity $\Phi=\bst\varphi$ implies
      \begin{equation}
      u(x) =  \int_{-\infty}^x i\frac{\eta^-(y,z)}{D(z)}\cdot \bst \varphi(y) dy \cdot \zeta^+(x,z) - \int_x^{+\infty}i\frac{\eta^+(y,z)}{ \tD(z)} \cdot \bst \varphi(y) dy\cdot \zeta^-(x,z)
      \end{equation}
      is a solution of \eqref{inhom} which decays as $x\to\pm\infty$. Since $\Phi=\bst \vp$, we proved:
      
\begin{prop}\label{prop:1a} For $z\notin\Sigma(\Di)$, $\left(\Di - z\right)^{-1}$  has kernel
\begin{equation}
 H(x,y,z) \de \systeme{      \dfrac{i}{D(z)} \cdot \eta^-(y,z) \cdot \bst \zeta^+(x,z), & y<x, \\
   -\dfrac{i}{\tD(z)} \cdot \eta^+(y,z) \cdot  \bst  \zeta^-(x,z), &  x<y. }
\end{equation}
\end{prop}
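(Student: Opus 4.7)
The approach I would take is variation of parameters for the first-order $2\times2$ system $(D_x - A(\cdot,z))\alpha = \Phi$ with inhomogeneity $\Phi = \bst \varphi$, equivalent to $(\Di - z)\alpha = \varphi$ after using $\bst^2 = \Id_2$ (and we may take $\varphi \in C_0^\infty(\R,\C^2)$ initially and extend by density at the end). Because $D(z), \tD(z) \neq 0$ for $z \notin \Sigma(\Di)$ by Corollary \ref{pt-spec}, the decaying solutions $\zeta^+(x,z)$ and $\zeta^-(x,z)$ of the homogeneous system are linearly independent at every $x \in \R$, so they form a pointwise basis of $\C^2$. I would then seek $\alpha(x) = a(x)\zeta^+(x,z) + b(x)\zeta^-(x,z)$.

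The key algebraic step is to observe that the conservation law $D_x(\eta \cdot \zeta) = 0$ -- valid whenever $\eta$ solves \eqref{adjoint} and $\zeta$ solves \eqref{direct} -- combined with asymptotic evaluation as $x \to \pm\infty$ yields the four identities
\begin{equation}
\eta^-(x,z)\cdot\zeta^+(x,z) = D(z), \quad \eta^+(x,z)\cdot\zeta^-(x,z) = \tD(z), \quad \eta^-\cdot\zeta^- \equiv 0, \quad \eta^+\cdot\zeta^+ \equiv 0.
\end{equation}
The last two follow by evaluating at the endpoint where both factors decay (so the constant must be $0$). Substituting the ansatz into the inhomogeneous equation, the homogeneous parts cancel and leave $(D_x a)\zeta^+ + (D_x b)\zeta^- = \Phi$. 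Left-multiplying this relation by $\eta^-(\cdot,z)$ kills the $\zeta^-$ term and yields $D(z) D_x a = \eta^- \cdot \Phi$; left-multiplying by $\eta^+(\cdot,z)$ kills the $\zeta^+$ term and yields $\tD(z) D_x b = \eta^+ \cdot \Phi$.

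Next I would fix the integration constants so that $\alpha$ decays at both infinities. Exponential decay of $\zeta^+$ at $+\infty$ forces taking $a(x) = \int_{-\infty}^x \tfrac{i}{D(z)} \eta^-(y,z)\cdot\bst\varphi(y)\,dy$, and exponential decay of $\zeta^-$ at $-\infty$ forces $b(x) = -\int_x^{+\infty} \tfrac{i}{\tD(z)} \eta^+(y,z)\cdot \bst\varphi(y)\,dy$. (The factors of $i$ come from converting $\p_x$ to $D_x = \tfrac{1}{i}\p_x$.) Writing $\alpha(x) = \int_\R H(x,y,z)\varphi(y)\,dy$ then reads off precisely the kernel displayed in the statement, split according to whether $y < x$ (the contribution from $a$) or $x < y$ (the contribution from $b$).

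The main obstacle -- really the only one, since the algebra above is essentially forced -- is to verify that this integral operator genuinely represents $(\Di - z)^{-1}$ on all of $L^2$, not merely on test functions. For this I would check that (i) the resulting $\alpha$ lies in $H^1(\R,\C^2) = \mathrm{dom}(\Di)$, which uses the exponential decay rate $\Im\mu(z) > 0$ on $\Omega \setminus \R$ (and on $(-|\var_\star|,|\var_\star|)$ away from $\Sigma_\pp(\Di)$), giving uniform $L^1_y$ bounds on $H(x,y,z)$ in $x$ and vice-versa; (ii) by Schur's test the operator is bounded on $L^2$; (iii) the construction shows $(\Di - z)\alpha = \varphi$ on $C_0^\infty$, so by density and boundedness it agrees with the resolvent throughout $L^2$.
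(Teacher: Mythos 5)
Your proof is correct and follows essentially the same variation-of-constants argument as the paper: reduce $(\Di-z)\alpha=\varphi$ to a first-order system with $\Phi=\bst\varphi$, use the ansatz $\alpha = a\zeta^+ + b\zeta^-$, apply the conservation-law identities to solve for $D_x a$ and $D_x b$, and fix integration constants by decay. The only difference is that you spell out the final density/Schur-test step to extend from $C_0^\infty$ to $L^2$, which the paper leaves implicit.
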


 The following proposition is a consequence of  \cite[Theorem 1.11]{PW:92} -- the proof there is written for $D'(z_0)$ but applies  to  $\widetilde{D}'(z_0)$ as well. 
 
    \begin{prop}\label{Dprime}
     Let $z_0\in(-|\var_\star|,|\var_\star|)$ be an eigenvalue of $\Di$.       Then, 
      \begin{equations} 
  D(z_0)=0, \ \ \ \   D'(z_0)  =  -  i  \int_\R  \eta^-(x,E_0) \cdot \bst  \zeta^+(x,z_0)  dx \\
      \tD(z_0)=0, \ \ \ \   \widetilde{D}'(z_0)   =  i  \int_\R  \eta^+(x,z_0)\cdot \bst  \zeta^-(x,z_0)  dx.
      \end{equations}
      \end{prop}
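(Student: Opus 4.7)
My plan is to combine the Evans function framework developed in Sections C.1–C.2 with the self-adjointness of $\Di$, in two stages: (a) show that every eigenvalue $z_0 \in (-|\var_\star|, |\var_\star|)$ is a simple zero of $D$; (b) via the resolvent kernel formula in Proposition \ref{prop:1a}, deduce that $(\Di - z)^{-1}$ has a simple pole at $z_0$ with a rank-one residue, which for a self-adjoint operator forces the eigenvalue to have multiplicity one.

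For (a), the key is a relation between the decaying direct solution $\zeta^+(\cdot, z_0)$ and the decaying adjoint row solution $\eta^-(\cdot, z_0)$. Taking componentwise complex conjugates of $D_x u = A(\cdot, z_0) u$ with $u = \zeta^+(\cdot, z_0)$ gives $D_x \overline{u}^T = -\overline{u}^T A^*(\cdot, z_0)$, and a Pauli computation using $\bst^* = \bst$, $\bss^* = \bss$, $z_0 \in \R$ yields $A^*(\cdot, z_0) = \bst\bss\kappa + z_0 \bst$. This is \emph{not} the adjoint system's matrix $A = \bss\bst\kappa + z_0\bst$, precisely because $\bst\bss = -\bss\bst$. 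Seeking a constant matrix $M$ so that $\beta := \overline{u}^T M$ solves $D_x \beta = -\beta A$ reduces to $[\bst, M] = 0$ and $\{\bss\bst, M\} = 0$, which (since $\bst = \mathrm{diag}(1,-1)$ and $\bss\bst$ is off-diagonal) force $M = m \bst$ for some $m \in \C$. Applied to $u = \zeta^+(\cdot, z_0)$, exponentially decaying at both $\pm\infty$ at an eigenvalue by Theorem \ref{thm:4}(2), the row-vector $u^* \bst$ decays at both ends and solves the adjoint system, so by uniqueness of the decaying adjoint row solution,
\[
\eta^-(x, z_0) \;=\; c\, \zeta^+(x, z_0)^* \bst \qquad \text{for some } c \in \C \setminus \{0\}.
\]

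Substituting into the integral formula of Proposition \ref{Dprime} and using $\bst^2 = I$ gives
\[
D'(z_0) \;=\; -i\, c \int_\R |\zeta^+(x, z_0)|^2\, dx \;\neq\; 0,
\]
and similarly (via $\tD = \psi D$ or the parallel integral identity) $\tD'(z_0) \neq 0$. Now step (b): Proposition \ref{prop:1a} shows that $(\Di - z)^{-1}$ has a simple pole at $z_0$, with residue the rank-one kernel $\tfrac{i}{D'(z_0)}\, \bst \zeta^+(x, z_0)\, \eta^-(y, z_0)$ — the two branches $y<x$ and $y>x$ agreeing via the proportionalities $\zeta^- = C_\zeta \zeta^+$, $\eta^+ = C_\eta \eta^-$ at the eigenvalue together with $\tD'(z_0) = -C_\zeta C_\eta D'(z_0)$. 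Self-adjointness of $\Di$ equates this residue with $-P_{z_0}$, the orthogonal spectral projector onto the $z_0$-eigenspace of $\Di$; a rank-one projector means $z_0$ has multiplicity one.

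The main obstacle I anticipate is the uniqueness step concluding $\eta^-(\cdot, z_0) = c\, \zeta^+(\cdot, z_0)^* \bst$: one must check that $\zeta^+(\cdot, z_0)^* \bst$ decays at $-\infty$ (it decays at $+\infty$ by construction of $\zeta^+$), which at an eigenvalue follows from $\zeta^+(\cdot, z_0) \propto \zeta^-(\cdot, z_0)$ (Theorem \ref{thm:4}(2)) together with the standard asymptotic ODE fact that the adjoint system admits at most a one-dimensional space of row solutions decaying at each end — mirroring the analysis used in Section C.1 to construct $\eta^\pm$.
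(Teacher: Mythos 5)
You are not proving Proposition~\ref{Dprime}. What you have written is, in substance, the paper's argument for Theorem~\ref{thm:5} (simplicity of the eigenvalues of $\Di$) together with the later analysis of the eigenprojector $\Pi_{z_0}$. The giveaway is the sentence ``\emph{Substituting into the integral formula of Proposition~\ref{Dprime} and using $\bst^2=I$ gives $D'(z_0)=-ic\int_\R|\zeta^+|^2\,dx\ne 0$}'' --- you \emph{cite} Proposition~\ref{Dprime} as a tool to obtain $D'(z_0)\ne 0$. But the integral formula $D'(z_0)=-i\int_\R \eta^-(x,z_0)\cdot\bst\,\zeta^+(x,z_0)\,dx$ is exactly what the proposition asserts and what you were asked to prove; taking it as an input is circular. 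Everything that follows --- the relation $\eta^-(\cdot,z_0)=c\,\zeta^+(\cdot,z_0)^*\bst$ from the Pauli commutation relations, the proportionality $\zeta^-=C_\zeta\zeta^+$, $\eta^+=C_\eta\eta^-$, and the identification of the residue of $(\Di-z)^{-1}$ as a rank-one projector --- is correct, and it is precisely what the paper does in \eqref{adj-eta}--\eqref{eq:3b} to deduce Theorem~\ref{thm:5} from Propositions~\ref{prop:1a} and~\ref{Dprime}. None of it, however, establishes the derivative formula.

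The paper proves Proposition~\ref{Dprime} simply by invoking Pego--Weinstein \cite[Theorem~1.11]{PW:92}. If you want a self-contained proof, the route is the standard Evans-function derivative computation: differentiate the first-order system $D_x\zeta^+=A(\cdot,z)\zeta^+$ in $z$, using $\p_z A=\bst$, to get $D_x\p_z\zeta^+=A\,\p_z\zeta^+ +\bst\zeta^+$; then compute
\[
\p_x\!\left(\eta^-\,\p_z\zeta^+\right)=i\,\eta^-\bst\zeta^+,
\qquad
\p_x\!\left((\p_z\eta^-)\,\zeta^+\right)=-\,i\,\eta^-\bst\zeta^+,
\]
and use $D'(z)=(\p_z\eta^-)\zeta^++\eta^-\,\p_z\zeta^+$ (independent of $x$). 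At an eigenvalue $z_0$ both $\eta^-(\cdot,z_0)$ and $\zeta^+(\cdot,z_0)$ decay exponentially at $\pm\infty$; and since $\kappa$ is exactly constant for $|x|>L$, the $z$-derivatives $\p_z\zeta^+$ and $\p_z\eta^-$ decay (with at most polynomial loss) as $x\to+\infty$ and $x\to-\infty$ respectively. Hence $\eta^-\,\p_z\zeta^+\to 0$ at $+\infty$ and $(\p_z\eta^-)\zeta^+\to 0$ at $-\infty$, and integrating the two identities from $x$ to $+\infty$ and from $-\infty$ to $x$ and summing gives $D'(z_0)=-i\int_\R\eta^-\bst\zeta^+\,dx$. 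The argument for $\tD'(z_0)$ is identical with $(\eta^-,\zeta^+)$ replaced by $(\eta^+,\zeta^-)$, the sign flipping because $\eta^+$ decays at $+\infty$ and $\zeta^-$ at $-\infty$.
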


    Assume that $z_0 \in(-|\var_\star|,|\var_\star|)$ is an eigenvalue of $\Di$. We show that  $\overline{\bst\zeta^+(\cdot,z_0)}^\top$ solves the adjoint equation \eqref{adjoint} and decays as $x\to-\infty$. Indeed, suppose 
 \begin{equation}        D_x\az  =  A(\cdot,z) \cdot \az  = (\bss \bst\kappa   + z\bst ) \az. 
   \end{equation}
Multiplying by $\bst$ and using $\bst \bss = -\bss \bst$, we deduce
  \begin{equation}
 D_x(\bst \az) =  (\bss \bst\kappa +z\bst) \cdot \bst \az.
   \end{equation}
   Next take the transpose, then the complex conjugate to obtain
   \begin{equations}
  D_x(\bst\az)^\top = (\bst\az)^\top \cdot (\bst \bss \kappa + z\bst),
  \\
  D_x \ove{(\bst\az)}^\top = -\ove{(\bst\az)}^\top \cdot (\bst \bss \kappa + z\bst) = - \ove{(\bst\az)}^\top \cdot A(\cdot,z).
  \end{equations}
Since  $\az =\zeta^+(\cdot,z_0)$ decays exponentially as $x\to\pm\infty$ (because $z_0$ is an eigenvalue), the above calculation shows that $[\overline{\bst\zeta^+(\cdot,z_0)}]^\top$ solves the adjoint equation \eqref{adjoint} and decays exponentially as $x\to-\infty$.
       Therefore, for some constant $\gamma\in\C$, 
        \begin{equation}
         \eta^-(x,z_0)= \gamma \cdot  \overline{\bst\zeta^+(x,z_0)}^\top .\label{adj-eta}
         \end{equation}
        Because of Theorem \ref{thm:4} and \eqref{adj-eta}, we deduce that
        \begin{equation}
        \zeta^-(x,z_0)= C_\zeta\cdot \zeta^+(x,z_0)\ \textrm{ and } \
         \eta^+(x,z_0)=C_\eta\cdot \eta^-(x,z_0)= 
         C_\eta \gamma \cdot  \overline{\bst\zeta^+(x,z_0)}^\top.
        \label{zepm} \end{equation}
Substituting \eqref{zepm} in Proposition \ref{Dprime}, we deduce that

\begin{prop}\label{simple} Let $z_0$ be an eigenvalue of $\Di$. Then, 
$D(z_0)= \tD(z_0) =0$ and 
\begin{equations} 
 D'(z_0)=-i  \gamma \cdot |\zeta^+(\cdot,z_0)|_{L^2}^2\ne0, \ \ \ \ 
 \tD'(z_0)  =i  C_\eta  C_\zeta  \gamma \cdot  |\zeta^+(\cdot,z_0)|_{L^2}^2 \neq 0.
  \end{equations}
\end{prop}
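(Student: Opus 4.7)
\textbf{Plan of proof for Proposition \ref{simple}.}
The first conclusion, $D(z_0)=\tD(z_0)=0$, is immediate from Corollary~\ref{pt-spec}. The task is therefore to derive the explicit formulas for $D'(z_0)$ and $\tD'(z_0)$ and then verify that neither vanishes. The strategy is a direct substitution: we plug the exponentially decaying-at-both-ends identities \eqref{adj-eta} and \eqref{zepm} into the integral formulas of Proposition~\ref{Dprime}, and simplify using a short linear-algebraic identity about $\bst$.

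The only non-routine ingredient is the pointwise simplification
\begin{equation}
\ove{\bst\, v}^\top \,\bst\, w \ =\ \ove{v}^\top\, \bst^\top \bst\, w \ =\ \ove{v}^\top\, w \quad \text{for all } v,w\in\C^2,
\end{equation}
which holds because $\bst$ has real entries and satisfies $\bst^2=\Id_2$. Applied pointwise with $v=w=\zeta^+(x,z_0)$, this yields $\ove{\bst\,\zeta^+(x,z_0)}^\top\bst\,\zeta^+(x,z_0)=|\zeta^+(x,z_0)|_{\C^2}^2$. Substituting $\eta^-(x,z_0)=\gamma\cdot\ove{\bst\,\zeta^+(x,z_0)}^\top$ from \eqref{adj-eta} into the formula of Proposition~\ref{Dprime} for $D'(z_0)$ gives
\begin{equation}
D'(z_0)\ =\ -i\gamma\int_\R |\zeta^+(x,z_0)|_{\C^2}^2\,dx\ =\ -i\gamma\cdot|\zeta^+(\cdot,z_0)|_{L^2}^2.
\end{equation}
Similarly, substituting $\eta^+(x,z_0)=C_\eta\gamma\cdot\ove{\bst\,\zeta^+(x,z_0)}^\top$ and $\zeta^-(x,z_0)=C_\zeta\cdot\zeta^+(x,z_0)$ from \eqref{zepm} into the formula for $\tD'(z_0)$ yields
\begin{equation}
\tD'(z_0)\ =\ iC_\eta C_\zeta\gamma\cdot|\zeta^+(\cdot,z_0)|_{L^2}^2.
\end{equation}

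It remains to verify that each factor in these two expressions is nonzero. By construction $\zeta^+(\cdot,z_0)$ is a nontrivial solution of the ODE \eqref{direct} (it has a prescribed nonzero leading eigenvector $f^+(z_0)$ at $+\infty$), so $|\zeta^+(\cdot,z_0)|_{L^2}^2>0$, and the $L^2$ integrability follows from the eigenvalue assumption via Theorem~\ref{thm:4}(2). The constants $C_\zeta$ and $C_\eta$ are nonzero directly by Theorem~\ref{thm:4}(2)-(3), since they relate two nontrivial exponentially decaying solutions. Finally $\gamma\neq 0$: otherwise \eqref{adj-eta} would force $\eta^-(\cdot,z_0)\equiv 0$, contradicting its defining asymptotic $\eta^-(x,z_0)=g^-(z_0)e^{-i\mu(z_0)x}$ as $x\to-\infty$ with $g^-(z_0)\neq 0$. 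This completes the proof.

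The main (and only mildly technical) point is the reduction $\ove{\bst v}^\top\bst w=\ove v^\top w$, which is what permits the integrand to collapse to a pointwise squared norm; everything else is bookkeeping on the structural identities already established earlier in the appendix.
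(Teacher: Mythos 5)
Your proof is correct and follows the same route as the paper: substitute the relations \eqref{adj-eta} and \eqref{zepm} into the integral formulas of Proposition~\ref{Dprime} and simplify. The only difference is presentational — you make explicit the reduction $\ove{\bst v}^\top\bst w=\ove{v}^\top w$ (from $\bst$ real symmetric with $\bst^2=\Id_2$) and the argument that $\gamma\neq 0$, both of which the paper leaves implicit.
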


This shows that if $z_0$ is an eigenvalue of $\Di$ then it is a simple zero of $z\mapsto D(z)$, i.e.  $D(z_0)=0$ and $D'(z_0) \neq 0$. Let $\Pi_{z_0}$ be the eigenprojector associated to $z_0$. We compute its kernel: from the Cauchy formula,
\begin{equation}
\Pi_{z_0}(x,y) = -\dfrac{1}{2\pi i} \oint_{z \in \Dd(z_0,r)} H(x,y,z) dz. 
\end{equation}
Proposition \ref{prop:1a} implies
\begin{align}
\Pi_{z_0}(x,y) &=
\begin{cases}
  -\dfrac{i}{D'(z_0)} \cdot \eta^-(y,z_0) \cdot  \bst \zeta^+(x,z_0), & x > y\\
  \ \ \dfrac{i}{\tD'(z_0)} \cdot \eta^+(y,z_0) \cdot  \bst  \zeta^-(x,z_0), &x<y.
  \end{cases}
  \label{eq:3b}
  \end{align}

Furthermore, using \eqref{adj-eta} and \eqref{zepm} we have
 \begin{equations}\label{emzp}
i  \eta^-(y,z_0)\cdot  \bst \zeta^+(x,z_0) 
= i  \gamma\cdot  \overline{\zeta^+(y,z_0)} ^\top\cdot  \zeta^+(x,z_0), \\
\eta^+(y,z_0)\  \bst \zeta^-(x,z_0) =i \gamma C_\eta C_\zeta\cdot  \overline{\zeta^+(y,z_0)} ^\top\cdot  \zeta^+(x,z_0).
\end{equations}
Substitution of \eqref{emzp} into \eqref{eq:3b} and using Proposition \ref{simple} yields 
\begin{equation}
\Pi_{z_0}(x,y) = \overline{\Psi_0(y)}^\top \cdot \Psi_0(x), \ \ \ {\rm where}\quad \Psi_0(y)= \dfrac{\zeta^+(y,z_0)}{|\zeta^+(\cdot,z_0)|_{L^2}}.
\end{equation}
 This is the kernel of a rank-one operator.
This concludes the proof. 

\end{document}